\documentclass{amsart}
\usepackage{enumerate}
\usepackage{mathtools}
\usepackage{amsfonts}
\usepackage{amssymb}
\usepackage{amsthm}
\usepackage{mathrsfs}
\usepackage{graphicx}
\usepackage{url}
\usepackage[margin=1in]{geometry}

%------------------------------
% AMS STUFF
%------------------------------
\newtheorem{thm}{Theorem}[section]
\newtheorem{prop}[thm]{Proposition}

\newtheorem{cor}[thm]{Corollary}

\newtheorem{lemma}[thm]{Lemma}
\newtheorem*{thm*}{Theorem}						% Suppress numbering
\newtheorem*{prop*}{Proposition}
\newtheorem*{lemma*}{Lemma}
\newtheorem*{cor*}{Corollary}
\newtheorem*{conj*}{Conjecture}

\theoremstyle{definition}
\newtheorem{definition}[thm]{Definition}
\newtheorem{example}[thm]{Example}
\newtheorem{remark}[thm]{Remark}

%------------------------------
% END AMS STUFF
%------------------------------

%------------------------------
% Custom math operators
%------------------------------
\newcommand{\Z}{\mathbb{Z}}

\newcommand{\R}{\mathbb{R}}

\renewcommand{\P}{\mathbb{P}}
\DeclareMathOperator{\grass}{G}
\DeclareMathOperator{\pgrass}{\mathbb{G}}
\DeclareMathOperator{\gl}{GL}
\DeclareMathOperator{\eff}{Eff}
\DeclareMathOperator{\ceff}{\overline{\eff}}
\DeclareMathOperator{\pic}{Pic}
%------------------------------
% END custom math operators
%------------------------------

\begin{document}

\title[Effective cycles on blow-ups of $\grass(k,n)$]{Effective cycles on blow-ups of Grassmannians}

\author[J. Kopper]{John Kopper}
\address{Department of Mathematics, Statistics and CS \\ University of Illinois at Chicago, Chicago, IL 60607}
\email{jkoppe2@uic.edu}

\thanks{During the preparation of this article the author was partially supported by NSF RTG grant DMS-1246844.}

\subjclass[2010]{Primary: 14C25, 14C99, 14M15, 14N15. Secondary: 14E30, 14E99, 14M07}
\keywords{Cones of effective cycles, higher codimension cycles, blow-ups of Grassmannians}

\begin{abstract}
In this paper we study the pseudoeffective cones of blow-ups of Grassmannians at sets of points. For small numbers of points, the cones are often spanned by proper transforms of Schubert classes. In some special cases, we provide sharp bounds for when the Schubert classes fail to span and we describe the resulting geometry.
\end{abstract}

\maketitle

\section{Introduction}
Positive cones of divisors and curves play an important role in algebraic geometry and have been a subject of great interest for many years. Recently there has been increased interest in cones of higher codimension cycles (\cite{chen-coskun}, \cite{clo}, \cite{delv}, \cite{djv}, \cite{fulger-lehmann}), but only a small number of examples have been studied. The goal of this paper is to contribute to the body of known examples. This paper was inspired by the recent work of Coskun-Lesieutre-Ottem \cite{clo} who studied effective cones on blow-ups of projective space.

Let $X=\grass(k,n)$ be the Grassmannian of $k$-dimensional subspaces of a fixed $n$-dimensional vector space over an algebraically closed field. $X$ is a rational homogeneous variety of dimension $k(n-k)$. In this paper we will assume $k \geq 2$ and $n-k \geq 2$. Let $\Gamma$ be a set of $r$ distinct points of $X$, and $X_\Gamma$ the blow-up of $X$ along $\Gamma$. We will denote by $\ceff_m(X_\Gamma)$ the closed convex cone of pseudoeffective dimension-$m$ cycles of $X_\Gamma$, and by $\ceff^m(X_\Gamma)$ the cone of pseudoeffective codimension-$m$ cycles.

Computations of the effective cones in this paper follow roughly three distinct methods. First, if a connected solvable group $B$ acts on a variety, then the effective cones are determined by $B$-invariant effective cycles (Section \ref{sec:group_actions}). Second, we can produce nef classes that impose numerical conditions on effective cycles of complementary dimension (see Sections \ref{sec:divisors} and \ref{sec:examples}). Lastly, we can push cycles forward from a subvariety $Y\subset X_\Gamma$ when $\ceff_m(Y)$ is understood (Section \ref{sec:infinitely_gen}).

Even in the case of divisors, little is known about the structure of the effective cone when $\Gamma$ consists of more than a few points. It is not known, for example, how many points need to be blown-up for $X_\Gamma$ to cease to be a Mori Dream Space or log Fano. However, if $r$ is sufficiently small, then $\ceff_m(X_\Gamma)$ often admits a simple description in terms of pullbacks of the Schubert classes on $\grass(k,n)$.

\begin{definition}
Let $X_\Gamma$ denote the blow-up of the Grassmannian $\grass(k,n)$ at a finite set of points $\Gamma$. We say that $\ceff_m(X_\Gamma)$ is \emph{S-generated} if it is the cone spanned by the classes of $m$-dimensional linear spaces in the exceptional divisors and the classes of strict transforms of $m$-dimensional Schubert classes on $\grass(k,n)$, possibly passing through the points of $\Gamma$. If $\alpha$ is in the span of these classes, we will say that $\alpha$ is \emph{in the span of the Schubert classes.} We say that $\ceff_m(X_\Gamma)$ is \emph{finitely generated} if it is a rational polyhedral cone.
\end{definition}

Our first result says that when $\Gamma$ consists of a single point, the pseudoeffective cone in all dimensions is necessarily S-generated.

\begin{cor*}[\ref{cor:lin_gen1pt}]
Let $X_1$ be the blow-up of $\grass(k,n)$ at a point. Then $\ceff_m(X_1)$ is S-generated for $0 \leq m < k(n-k)$.
\end{cor*}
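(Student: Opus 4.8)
The plan is to apply the group-action principle established in Section~\ref{sec:group_actions}: since a connected solvable group $B$ acting on a projective variety $Y$ forces $\ceff_m(Y)$ to be spanned by the classes of $B$-invariant $m$-dimensional subvarieties, it suffices to produce such a group acting on $X_1$ and to classify its invariant subvarieties. Writing $\grass(k,n)=\gl_n/P$, the stabilizer of the blown-up point $p$ (corresponding to a $k$-plane $V_p$) is a parabolic subgroup, so I would choose a Borel subgroup $B\subset\gl_n$ contained in this stabilizer. Then $B$ is connected and solvable, it fixes $p$, and hence its action lifts canonically to the blow-up $X_1$, making both the exceptional divisor $E$ and the blow-down $\pi\colon X_1\to\grass(k,n)$ equivariant. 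It therefore remains to show that the class of every irreducible $B$-invariant subvariety $Z\subseteq X_1$ of dimension $m$ lies in the span of the Schubert classes and the exceptional linear spaces.

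Here I would split into two cases according to whether $Z\subseteq E$. If $Z\not\subseteq E$, then $Z$ is the strict transform of its image $\pi(Z)\subseteq\grass(k,n)$, which is again an irreducible $B$-invariant subvariety. By the Bruhat decomposition, the irreducible $B$-invariant closed subvarieties of $\grass(k,n)$ are precisely the Schubert varieties associated to a complete flag fixed by $B$ (one having $V_p$ as its $k$-dimensional term). Thus $Z$ is the strict transform of such a Schubert variety, possibly passing through $p$, and by definition its class lies in the span of the Schubert classes. If instead $Z\subseteq E$, then, because $p$ is a smooth point of the smooth variety $\grass(k,n)$, the exceptional divisor is $E\cong\P^{N-1}$ with $N=k(n-k)$, where the numerical classes of $m$-cycles form a one-dimensional group generated by a linear $\P^m$. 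Hence $[Z]=(\deg Z)\,[\P^m]$ inside $E$, and pushing forward along $E\hookrightarrow X_1$ shows that the class of $Z$ in $X_1$ is the positive multiple $(\deg Z)\,[\P^m]$ of an exceptional linear-space generator. Combining the two cases gives that $\ceff_m(X_1)$ is S-generated for all $0\le m<N$.

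The substantive input is the group-action principle itself, which I am assuming from Section~\ref{sec:group_actions}; granting it, the remaining work is bookkeeping. The step that requires the most care is the dichotomy for $B$-invariant subvarieties: one must check that the lifted $B$-action genuinely yields only the two advertised types of invariant subvarieties, that strict transforms of Schubert varieties are $B$-invariant, and that no invariant subvariety is missed. The case $Z\subseteq E$ is essentially automatic, since numerical equivalence on $\P^{N-1}$ is one-dimensional in each degree and no delicate intersection-theoretic computation is needed there. I expect the only genuine friction to be confirming that every irreducible invariant subvariety not contained in $E$ descends to a Schubert variety through or avoiding $p$, which is exactly the content of the Bruhat decomposition for $\gl_n$ acting on $\grass(k,n)$.
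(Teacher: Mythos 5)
Your proposal is correct and follows essentially the same route as the paper: invoke Lemma~\ref{lemma:anderson} for a Borel subgroup fixing the blown-up point, lift the action to $X_1$, and classify the $B$-invariant subvarieties as strict transforms of Schubert varieties (via the Bruhat decomposition) or cycles inside $E$. Your treatment of the case $Z\subseteq E$ is in fact slightly more explicit than the paper's, which leaves the identification $[Z]=(\deg Z)\,E_{[m]}$ implicit.
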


Using a description of the singularities of Schubert varieties we can use the above result to explicitly describe $\ceff_m(X_1)$. This recovers a result of Rischter \cite{rischter} regarding the effective cone of divisors on blow-ups of Grassmannians at a single point. When $n=2k$ and $\Gamma$ is a set of two general points, we are able to give a similarly explicit description of $\ceff^1(k,2k)$.

\begin{thm*}[\ref{thm:divisors_2pts}]
Let $\Gamma$ be a set of two general points on $\grass(k,2k)$ and $X_\Gamma$ the blow-up of $\grass(k,2k)$ along $\Gamma$. Then $\ceff^1(X_\Gamma)$ is S-generated. In particular it is spanned by the exceptional classes and classes of the form $H-mE_1 - (k-m)E_2$, where $0 \leq m \leq k$ and $H$ is the pullback of the hyperplane class on $\grass(k,2k)$ in its Pl\"ucker embedding.
\end{thm*}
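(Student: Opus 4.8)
The plan is to compute $\ceff^1(X_\Gamma)$ inside the rank-three Néron--Severi space $N^1(X_\Gamma)_\R = \langle H, E_1, E_2\rangle$ by establishing two inclusions. Writing a class as $xH + yE_1 + zE_2$, I would show that the cone $C$ spanned by the asserted generators is exactly
\[
C = \{\, x \ge 0,\ kx + y \ge 0,\ kx + z \ge 0,\ kx + y + z \ge 0 \,\},
\]
a three-dimensional cone whose extremal rays are $E_1$, $E_2$, $H - kE_1$, $H - kE_2$; the intermediate classes $H - mE_1 - (k-m)E_2$ are collinear with the last two and all lie on the facet $kx + y + z = 0$.

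For the inclusion $C \subseteq \ceff^1(X_\Gamma)$ I would exhibit effective representatives of the generators. Since $\Gamma = \{p_1, p_2\}$ is general we may assume $V_1 \oplus V_2 = \C^{2k}$, where $V_i$ is the $k$-plane corresponding to $p_i$. For $0 \le m \le k$ set $W_m = U_1 \oplus U_2$ with $U_1 \subset V_1$ of dimension $m$ and $U_2 \subset V_2$ of dimension $k - m$, and consider the Schubert divisor $\Sigma_{W_m} = \{V : V \cap W_m \ne 0\}$, whose class is $H$. A local determinantal computation (the one underlying the description of singularities of Schubert varieties used for the one-point case) shows $\mathrm{mult}_{p_i}\Sigma_{W_m} = \dim(V_i \cap W_m)$, so the multiplicities at $p_1, p_2$ are $m$ and $k - m$ and the strict transform has class $H - mE_1 - (k-m)E_2$. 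Together with the obviously effective $E_1, E_2$ this realizes every generator.

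The substance of the theorem is the reverse inclusion $\ceff^1(X_\Gamma) \subseteq C$, which I would reduce to a multiplicity bound on prime divisors. Every prime divisor on $X_\Gamma$ is either $E_1$, $E_2$, or the strict transform of a prime divisor $\bar D \subset \grass(k, 2k)$ of Pl\"ucker degree $d \ge 1$ with multiplicities $a = \mathrm{mult}_{p_1}\bar D$ and $b = \mathrm{mult}_{p_2}\bar D$, so that its class is $dH - aE_1 - bE_2$. Since $a, b \ge 0$ and $d \ge 1$, an elementary linear-algebra check shows this class lies in $C$ precisely when $a + b \le kd$; granting this for all such $\bar D$, every effective class is a nonnegative combination of classes in $C$, and as $C$ is closed we obtain $\ceff^1(X_\Gamma) \subseteq C$.

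Thus the crux is the bound $\mathrm{mult}_{p_1}\bar D + \mathrm{mult}_{p_2}\bar D \le kd$, which I would prove with a covering family of low-degree curves joining the two points. Fixing bases $e_i$ of $V_1$ and $f_i$ of $V_2$, the maps $[s:t] \mapsto \langle s e_i + t f_i : 1 \le i \le k\rangle$ define smooth rational curves of Pl\"ucker degree $k$ through $p_1$ (at $[1:0]$) and $p_2$ (at $[0:1]$); letting the identification $V_1 \xrightarrow{\sim} V_2$ vary produces a family whose members sweep out a dense open subset of $\grass(k, 2k)$. For a given $\bar D$ I would choose a general member $C$ of this family, necessarily not contained in $\bar D$ and smooth at $p_1, p_2$; then $\bar D \cdot C = dk$, while local intersection multiplicities give $\bar D \cdot C \ge \mathrm{mult}_{p_1}\bar D + \mathrm{mult}_{p_2}\bar D$, yielding $a + b \le dk$. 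Equivalently, the strict transform $\tilde C$ is a movable curve on $X_\Gamma$ with $H \cdot \tilde C = k$ and $E_i \cdot \tilde C = 1$, so that $(dH - aE_1 - bE_2)\cdot \tilde C \ge 0$. The main obstacles I anticipate are verifying that these degree-$k$ curves genuinely cover $\grass(k,2k)$ --- so that a general member avoids any fixed $\bar D$ while remaining smooth at the two points --- and confirming the multiplicity computation for $\Sigma_{W_m}$; the numerical bookkeeping identifying $C$ with the asserted span is then routine.
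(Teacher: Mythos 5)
Your proposal is correct and follows essentially the same route as the paper's proof: your facet description of the cone $C$ plays the role of the paper's Lemma \ref{lemma:dual_cone_2pts}, your Schubert divisors $\Sigma_{W_m}$ with $\operatorname{mult}_{p_i} = \dim(V_i \cap W_m)$ are exactly the paper's construction of the effective classes $H - mE_1 - (k-m)E_2$, and your explicit degree-$k$ curves $[s:t] \mapsto \langle se_i + tf_i \rangle$ are precisely the degree-$k$ scrolls through general $(k-1)$-planes that the paper invokes (via \cite{coskun-lwr}) to produce the moving curve of class $k\ell - \ell_1 - \ell_2$. The only differences are presentational: you build the covering family by hand and argue prime divisor by prime divisor using closedness of $C$, whereas the paper cites the scroll lemma and pairs the moving curve directly against pseudoeffective classes.
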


Theorem \ref{thm:divisors_2pts} was also shown independently by Rischter in \cite{rischter}. We also give a description of $\ceff_m(X_\Gamma)$ when $\Gamma$ is two general points and $m$ is arbitrary.

\begin{thm*}[\ref{thm:2pts}]
Let $\Gamma$ be a set of 2 general points on $\grass(k,n)$ and $X_\Gamma$ the blow-up of $\grass(k,n)$ along $\Gamma$. Then $\ceff_m(X_\Gamma)$ is generated by strict transforms of Schubert cycles and strict transforms of intersections (not necessarily transverse) of Schubert cycles.
\end{thm*}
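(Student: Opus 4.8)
The plan is to compute $\ceff_m(X_\Gamma)$ through the action of a connected solvable group, as in Section~\ref{sec:group_actions}, and then to identify the invariant subvarieties with intersections of Schubert cycles. Since $\Gamma$ is general and $\mathrm{PGL}_n \subseteq \mathrm{Aut}(\grass(k,n))$ acts transitively on pairs of general points, I may assume the two points are $[V_1],[V_2]$ with $V_1,V_2$ coordinate subspaces in general position; I treat the representative case $n \geq 2k$, where $V_1 \cap V_2 = 0$, the case $n < 2k$ being analogous with $V_1 \cap V_2$ replacing $W$ below. Let $S \subseteq \gl_n$ be the subgroup stabilizing both $V_1$ and $V_2$. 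In block form relative to $V_1 \oplus V_2 \oplus W$, with $W$ a complement to $V_1+V_2$, it is the semidirect product of $\gl(V_1) \times \gl(V_2) \times \gl(W)$ with the unipotent radical $\mathrm{Hom}(W,V_1) \oplus \mathrm{Hom}(W,V_2)$. Fix a Borel subgroup $B \subseteq S$. Because $S$ fixes the two points, its action lifts to $X_\Gamma$ and preserves the exceptional divisors $E_1,E_2$; hence $B$ is a connected solvable group acting on $X_\Gamma$, and by the result of Section~\ref{sec:group_actions} the cone $\ceff_m(X_\Gamma)$ is spanned by the classes of $m$-dimensional irreducible $B$-invariant subvarieties.

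It therefore suffices to show that every irreducible $B$-invariant subvariety $Z \subseteq X_\Gamma$ is a component of an intersection of strict transforms of Schubert cycles. The key computation is to determine the $B$-invariant subspaces of $\mathbb{C}^n$: a coordinate subspace is $B$-invariant precisely when it is an initial segment inside each block, and it must contain all of $V_1 \oplus V_2$ as soon as it meets $W$. Consequently $B$ stabilizes two complete flags $F^\bullet$ and $F'^\bullet$ with $F_k = V_1$ and $F'_k = V_2$, both refining $V_1 \oplus V_2$ and a common flag on $W$. The Schubert varieties of $\grass(k,n)$ with respect to $F^\bullet$ and with respect to $F'^\bullet$ are thus $B$-invariant, as is any intersection of one with the other; since $F^\bullet$ and $F'^\bullet$ are not transverse, these intersections need not be proper, which accounts for the phrase ``not necessarily transverse.'' The strict transform in $X_\Gamma$ of such an intersection is again $B$-invariant, and when the two Schubert cycles meet only at a point $[V_i]$ the intersection of strict transforms lies in $E_i$, producing the linear exceptional classes. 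This is the mechanism by which both the Schubert-cycle generators and the exceptional generators arise.

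The main obstacle is to prove that these intersection cycles account for all of $\ceff_m(X_\Gamma)$, i.e. that every irreducible $B$-invariant $Z$ has class a nonnegative combination of strict transforms of intersections of Schubert cycles. The difficulty is that $(\grass(k,n),S)$ need not be spherical, so $B$ may have infinitely many orbits and $Z$ may move in a positive-dimensional family rather than being a single Schubert intersection. I would handle this by stratifying $\grass(k,n)$ according to the incidence invariants $\dim(U \cap V_1)$, $\dim(U \cap V_2)$, and $\dim(U \cap (V_1+V_2))$ --- the only data preserved by $S$ --- and showing that the closure of each stratum is cut out by the corresponding Schubert conditions for $F^\bullet$ and $F'^\bullet$, so that $\overline{Z}$ sits inside a fixed Schubert intersection of the same dimension. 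Comparing classes then forces $[\overline{Z}]$, and hence after taking strict transforms and accounting for the excess along the exceptional loci the class $[Z]$, to be one of the claimed generators. Verifying that the incidence strata are exactly these Schubert intersections, and controlling the contributions coming from non-transversality and from the points of $\Gamma$, is where the real work lies; the divisor case of this analysis is precisely Theorem~\ref{thm:divisors_2pts}, which I would use both as a guide and as the base case.
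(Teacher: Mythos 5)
Your overall framework is the paper's: reduce to $n \geq 2k$, take the Borel $B$ of the stabilizer of the two points (your block description of $B$ is exactly the group in the paper's Lemma \ref{lemma:2k+s}), lift the action to $X_\Gamma$, and invoke Lemma \ref{lemma:anderson} to reduce to classifying irreducible $B$-invariant subvarieties. But the core of the paper's proof is precisely the step you defer as ``where the real work lies,'' and the substitute you sketch for it would not work. The paper proves (Lemmas \ref{lemma:incidence_matrices}--\ref{lemma:2k+s}) that the $B$-orbits in $\grass(k,n)$ are \emph{exactly} the loci of constant incidence matrix $I_W = \left(\dim W \cap (F_i+G_j)\right)_{0\leq i,j\leq k}$, recording the intersection dimensions with \emph{all} the spaces $F_i+G_j$ built from the two full flags. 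This is what shows there are only finitely many orbits (so your worry about non-sphericity is in fact unfounded, but it must be proved, not assumed away) and that each orbit closure is an intersection of Schubert varieties; the proof requires the nontrivial construction of a canonical orbit representative with basis vectors of the form $f_{i_\ell}+g_{j_\ell}$.

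Your proposed stratification uses only the three invariants $\dim(U\cap V_1)$, $\dim(U\cap V_2)$, $\dim(U\cap(V_1+V_2))$. These are invariants of the full stabilizer $S$, not a complete set of invariants for $B$; the strata they define are (unions of) $S$-orbits, which are strictly coarser than $B$-orbits. Consequently an irreducible $B$-invariant subvariety need not be a union of your strata, and your containment-plus-dimension-count argument fails at the first example: the Schubert divisor $\Sigma_1(F^\bullet)$ is $B$-invariant, but its generic point has generic incidence with $V_1$, $V_2$, and $V_1+V_2$, so the only one of your stratum closures containing it is all of $\grass(k,n)$, and no class comparison is possible. The same happens for most Schubert varieties with respect to either flag, i.e.\ for most of the generators the theorem asserts. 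To repair the argument you must refine the stratification to the full incidence-matrix data with respect to both flags and then prove that each such locus is a single $B$-orbit --- which is exactly the paper's Lemmas \ref{lemma:incidence_matrices} and \ref{lemma:g(k,2k)_orbits} (extended to $n>2k$ in Lemma \ref{lemma:2k+s}), and it is the part of the proof your proposal is missing.
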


The proof of Theorem \ref{thm:2pts} will show that $\ceff_m(X_\Gamma)$ is in fact finitely generated. It is more difficult to describe $\ceff_m(X_\Gamma)$ when $\Gamma$ consists of many points. To constrain the problem, we restrict our attention to specific values of $m$. The cases $m=1$ and $m=2$ are particularly tractable because the intersection product on Grassmannians has a simple description in (co)dimensions 1 and 2. By studying the Pl\"ucker embedding of $\grass(k,n)$, one can often construct nef classes of divisors as restrictions of hypersurfaces in projective space. These classes impose numerical conditions on effective 1- and 2-cycles. Using this idea, Rischter proves in \cite{rischter} that if $\Gamma$ is a set of $r$ general points on $\grass(k,n)$, then the cone of curves $\ceff_1(X_\Gamma)$ is S-generated for $r$ at most the codimension of $\grass(k,n)$ in its Pl\"ucker embedding plus one. We show that the same holds for $\ceff_2(X_\Gamma)$.

\begin{prop*}[\ref{prop:lingen_terriblebound}]
Let $\Gamma$ be a set of $r$ general points on $\grass(k,n)$. Then the cones $\ceff_1(X_\Gamma)$ and $\ceff_2(X_\Gamma)$ are S-generated if
\[
r \leq \binom{n}{k} -k(n-k).
\]
\end{prop*}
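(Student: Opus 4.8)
The plan is to realize both cones as cut out by divisors built from the Pl\"ucker embedding $X=\grass(k,n)\subseteq\P^N$, $N=\binom{n}{k}-1$, and to reduce everything to a single statement about the linear span of the points. Write $d=k(n-k)=\dim X$ and $c=N-d=\binom{n}{k}-1-k(n-k)$, so the hypothesis reads $r\le c+1$. In $N_1(X_\Gamma)$, with basis the line class $\ell$ and the exceptional line classes $\ell_1,\dots,\ell_r$, the S-generators are $\ell$, the $\ell_i$, and the strict transforms $\ell-\ell_i$ of Schubert lines through a single point (two general points of $X$ lie on no line, so no generator $\ell-\ell_i-\ell_j$ occurs). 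A direct computation shows the cone they span consists of the classes $a\ell-\sum_i m_i\ell_i$ with $m_i\ge 0$ and $\sum_i m_i\le a$, together with the nonnegative combinations of the $\ell_i$. The inclusion of the S-cone in $\ceff_1(X_\Gamma)$ is clear, so it suffices to show that every irreducible curve is in the S-cone: one lying in some $E_i$ is a nonnegative multiple of $\ell_i$ since $\ceff_1(E_i)=\R_{\ge 0}\ell_i$, and otherwise it is the strict transform of a curve $C'\subseteq X$ of degree $a$, whose class $a\ell-\sum_i m_i\ell_i$ has $m_i=\mathrm{mult}_{p_i}C'$, so I only need $\sum_i m_i\le a$.

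The engine for this inequality, and what I expect to be the crux, is the claim that for $r\le c+1$ general points the span $\Lambda=\langle p_1,\dots,p_r\rangle\cong\P^{r-1}$ meets $X$ in a finite set. Granting this, no curve $C'\subseteq X$ lies in $\Lambda$, so the system of hyperplanes through $p_1,\dots,p_r$ (whose base locus is exactly $\Lambda$) has a member $H'$ with $C'\not\subseteq H'$; then $a=\deg C'=H'\cdot C'\ge\sum_i\mathrm{mult}_{p_i}C'$, since each $p_i\in H'$ contributes local intersection multiplicity at least $\mathrm{mult}_{p_i}C'$. I would prove the span lemma by induction on the codimension $c$: projecting $X$ from the general point $p_r$ gives an irreducible nondegenerate $X'\subseteq\P^{N-1}$ of codimension $c-1$ whose general points include the images $\bar p_1,\dots,\bar p_{r-1}$, and a positive-dimensional component of $\Lambda\cap X$ away from $p_r$ would map onto one of $\langle\bar p_1,\dots,\bar p_{r-1}\rangle\cap X'$, contradicting the inductive hypothesis as $r-1\le(c-1)+1$. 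The value $r=c+1$ is visibly the threshold: a general $\P^{r-1}$ meets $X$ in dimension $(r-1)-c$, so $r=c+2$ already forces a curve (e.g.\ a conic on $\grass(2,4)$ through three points), which is why S-generation should fail beyond the stated range.

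For $\ceff_2(X_\Gamma)$ the argument runs in parallel with two added inputs. First, $N_2(X)$ has rank two, spanned by the dimension-two Schubert classes $\tau_1,\tau_2$; by the solvable-group results of Section~\ref{sec:group_actions} one has $\ceff_2(X)=\langle\tau_1,\tau_2\rangle$, and each $\tau_j$ is a sub-Grassmannian $\grass(1,3)$ or $\grass(2,3)$, hence a linearly embedded $\P^2$ of Pl\"ucker degree $1$. Given an effective surface not contained in an $E_i$, I push forward to write its image as $c_1\tau_1+c_2\tau_2$ with $c_j\ge 0$, so its class is $c_1\tau_1+c_2\tau_2-\sum_i m_i P_i$, where $P_i$ is a $2$-plane in $E_i$ and $m_i=\mathrm{mult}_{p_i}\ge 0$. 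Second, intersecting with a general codimension-two linear space through the points — which meets the surface properly precisely because the span lemma forces the surface off $\Lambda$, equivalently pairing against the nef class $(H-\sum_i E_i)^2$ — yields $\sum_i m_i\le\deg V=c_1+c_2$, the last equality using $\deg\tau_j=1$. I then split each $m_i=a_i+b_i$ with $\sum_i a_i\le c_1$ and $\sum_i b_i\le c_2$ and rewrite the class as $\sum_i a_i(\tau_1-P_i)+\sum_i b_i(\tau_2-P_i)+(c_1-\sum_i a_i)\tau_1+(c_2-\sum_i b_i)\tau_2$, a nonnegative combination of S-generators.

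The main obstacle is the span lemma; once it is established the rest is bookkeeping. Two places call for care. The degree-one computation for the two Schubert surfaces is exactly what makes the final splitting in the surface case succeed — with a higher-degree surface the bound $\sum_i m_i\le\deg V$ would be too weak to fit inside the $(c_1,c_2)$ budget — so I would verify it explicitly. I would also confirm that the coefficient of $P_i$ in the strict-transform class is genuinely $\mathrm{mult}_{p_i}$ and that the local intersection numbers with $H'$ and with the codimension-two linear section are bounded below by these multiplicities, matching the chosen normalizations of $\ell_i$ and $P_i$; these are standard facts but should be pinned down against the conventions fixed above.
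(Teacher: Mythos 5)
Your overall route coincides with the paper's: finiteness of $\Lambda\cap X$ is used to make $H-\sum_i E_i$ nef (you phrase this as picking a hyperplane, or a codimension-two linear section, through the points that misses the given curve or surface), this yields the inequality $\sum_i m_i\le a$ for curves and $\sum_i m_i\le c_1+c_2$ for surfaces (where you correctly use that both two-dimensional dual Schubert varieties are linearly embedded $\P^2$'s of degree one), and the final greedy splitting into S-generators is exactly the paper's Lemma \ref{lem:sum_alambda}. The surrounding bookkeeping — multiplicities appearing as the exceptional coefficients (Lemma \ref{lemma:positive_coeffs}), no line of $\grass(k,n)$ through two general points (Proposition \ref{prop:lines_through_one_pt}), local intersection multiplicities bounded below by point multiplicities — is all sound.

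The gap is in your proof of the span lemma, which you rightly identify as the crux. In the inductive step you assert that a positive-dimensional component $Z$ of $\Lambda\cap X$ would map \emph{onto} a component of $\langle\bar p_1,\dots,\bar p_{r-1}\rangle\cap X'$ and so contradict finiteness downstairs. But the projection $\pi_{p_r}$ may contract $Z$ to a single point of that finite set, in which case there is no contradiction; this happens precisely when $Z$ is a line of $X$ through $p_r$ contained in $\Lambda$. This case is not vacuous: $\grass(k,n)$ is covered by lines, and the lines through $p_r$ sweep out a positive-dimensional cone, so one must genuinely rule out that such a line lies in $\Lambda$. The repair is short: run the identical projection argument from a second point $p_j$, $j\neq r$; then every positive-dimensional component of $\Lambda\cap X$ must be a line through both $p_j$ and $p_r$, hence the secant line $\overline{p_jp_r}$, and no line of $\grass(k,n)$ contains two general points (Proposition \ref{prop:lines_through_one_pt}; more generally, an irreducible nondegenerate variety of positive codimension cannot contain its general secant line). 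You should also note the uniformity issue: the inductive hypothesis is applied to $X'=X'_{p_r}$, which varies with $p_r$, so "general" must be tracked constructibly in the family, and one needs $X'$ to remain irreducible, nondegenerate, and of dimension $\dim X$ (projection from a general point is generically finite since $X'$ is not a cone with general vertex). For what it is worth, the paper itself does not prove this finiteness statement but asserts it from the dimension count $\dim\Lambda+\dim\grass(k,n)=\binom{n}{k}-1$; your attempt to supply an argument is welcome, but as written the induction does not close until the contracted-component case is handled.
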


In some circumstances this can be improved by one for curves (Proposition \ref{prop:lingen_prettybadbound}). We do not expect this to be sharp. Indeed we show that if the points of $\Gamma$ are very general then we can characterize precisely when $\ceff_1(X_\Gamma)$ is S-generated.

\begin{cor*}[\ref{prop:lingen_verygeneral}]
Let $X_r$ be the blow-up of $\grass(k,n)$ at $r$ very general points. Then $\ceff_1(X_r)$ is S-generated if and only if $r \leq \deg \grass(k,n)$ under the Pl\"ucker embedding $\grass(k,n) \to \P^{\binom{n}{k} -1}$.
\end{cor*}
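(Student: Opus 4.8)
My plan is to dualize and reduce S-generation to the nonexistence of a single kind of ``extra'' curve. The cone $C_S$ is spanned by the exceptional line classes $e_i$ and the strict transforms $\ell - e_i$ of Schubert lines through one point; since $n-k \geq 2$, no line of $\grass(k,n)$ passes through two very general points, so these are all the one-dimensional Schubert classes that occur. Writing a curve class as $a\ell - \sum_i b_i e_i$, one checks that it lies in $C_S$ if and only if $a \geq \sum_{b_i > 0} b_i$, and correspondingly the dual cone is the box $C_S^\vee = \{\delta H - \sum_i m_i E_i : 0 \leq m_i \leq \delta\}$. Because $\ceff_1(X_r)$ and $\mathrm{Nef}^1(X_r)$ are dual and $C_S \subseteq \ceff_1(X_r)$ always holds, S-generation is equivalent to every box divisor being nef, i.e.\ to the assertion that no irreducible curve $C \subset \grass(k,n)$ satisfies $H \cdot C < \sum_i \operatorname{mult}_{p_i}(C)$. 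The whole statement thus reduces to locating such ``violating'' curves as a function of $r$.

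For the only-if direction I would build a violating curve on a surface section. When the points fit on a linear section, choose $\Lambda$ with $Y := \grass(k,n) \cap \Lambda$ a surface through the points; then $Y$ has degree $\deg \grass(k,n)$, so on $\mathrm{Bl}_r Y$ the class $A := H|_Y - \sum_{i=1}^r E_i$ satisfies $A^2 = \deg \grass(k,n) - r$. For $r > \deg \grass(k,n)$ this is negative, so by the Hodge index theorem $A$ is not nef, and there is an irreducible curve $\tilde{C} \subset \mathrm{Bl}_r Y$ with $A \cdot \tilde{C} < 0$; its image $C \subset \grass(k,n)$ satisfies $H \cdot C < \sum_i \operatorname{mult}_{p_i}(C)$, so pushing $\tilde{C}$ forward along $Y \hookrightarrow \grass(k,n)$ and then into $X_r$ produces an effective class outside $C_S$. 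This is precisely the push-forward-from-a-subvariety mechanism of Section~\ref{sec:infinitely_gen}, and the exact appearance of $\deg \grass(k,n)$ is forced by $(H|_Y)^2 = \deg Y = \deg \grass(k,n)$.

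For the if direction I would prove the converse: when $r \leq \deg \grass(k,n)$ no violating curve exists, so every box divisor is nef and $\ceff_1(X_r) = C_S$. Proposition~\ref{prop:lingen_terriblebound} together with its curve refinement (Proposition~\ref{prop:lingen_prettybadbound}) already gives this for $r \leq \binom{n}{k} - k(n-k) + 1$, so the new content is to extend the range to $\deg \grass(k,n)$, and this is where the very general hypothesis is essential. A violating curve $C$ through very general points must deform in a family dominating the appropriate product of copies of $\grass(k,n)$; I would use this to place $C$ on a surface $Y$ through the points whose degree is forced to be at least $r$, and then the same Hodge index computation, now reading $A^2 = (H|_Y)^2 - r \geq 0$, shows that $A$ remains nef and forbids the violation.

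The main obstacle is the if direction, and in particular the two features that must be reconciled when $\deg \grass(k,n) > \binom{n}{k} - k(n-k) + 1$: in that range the very general points no longer fit on a linear surface section, so \emph{both} directions require a surface through the points that is not cut out by a linear space, and for the only-if direction one must still exhibit one of degree exactly $\deg \grass(k,n)$. Controlling the degrees of surfaces through very general points uniformly in the type of the hypothetical violating curve, and verifying that the minimal such degree is exactly $\deg \grass(k,n)$ so that the two ranges meet sharply, is the crux of the argument.
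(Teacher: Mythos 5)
Your reduction of S-generation to nefness of the box divisors (equivalently, of $H-\sum_{i=1}^r E_i$) is correct and matches the paper's Proposition \ref{prop:nef_implies_lingen}, but both substantive directions of your argument have genuine gaps. For the only-if direction, your construction needs the $r>\deg\grass(k,n)$ very general points to lie on a surface cut out by a linear space, and this fails by a dimension count: a linear section of $\grass(k,n)$ that is a surface is cut by a linear space of dimension $\binom{n}{k}+1-k(n-k)$, so it can contain at most $\binom{n}{k}+2-k(n-k)$ very general points, which is in general far smaller than $\deg\grass(k,n)$ (for $\grass(3,6)$ these numbers are $13$ versus $42$). You acknowledge this obstacle but do not resolve it, and it is in fact unnecessary: the paper sidesteps surfaces entirely by computing the top self-intersection on $X_r$ itself, $\left(H-\sum_{i=1}^r E_i\right)^{k(n-k)}=\deg\grass(k,n)-r<0$, so this class is not nef, hence meets some irreducible curve $a\ell-\sum_i b_i\ell_i$ negatively; since $b_i\geq 0$ by Lemma \ref{lemma:positive_coeffs}, that effective class has $a<\sum_i b_i$ and lies outside the Schubert span.

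For the if direction the gaps are more serious. First, your claim that a hypothetical violating curve through very general points can be placed on a surface through the points whose degree is at least $r$ is not proved, and is itself an interpolation-type statement with no evident argument. Second, even granting such a surface $Y$, the concluding inference is false: on a surface, $A^2\geq 0$ together with positive intersection against an ample class does not imply $A$ is nef (Hodge index gives bigness from positive square, not nefness; for instance $h+E$ on the blow-up of $\P^2$ at one point has square zero but meets $E$ negatively). Nefness of $H|_Y-\sum_i E_i$ at very general points of a surface is precisely the kind of SHGH-type problem that Section \ref{sec:infinitely_gen} of the paper treats as hard. The key idea your proposal is missing is a specialization in the opposite direction: take the $d=\deg\grass(k,n)$ points to be the complete linear section $\grass(k,n)\cap\Lambda$ for a general linear space $\Lambda$ of complementary dimension. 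For that special configuration the hyperplanes through the points have base locus exactly the points themselves, so $H-\sum_i E_i$ is nef on the blow-up by inspection, and semicontinuity of nefness in families (\cite[Prop. 1.4.14]{lazarsfeld}, as used in Proposition \ref{prop:picrank1_verygeneral}) transfers nefness to very general points; S-generation then follows because $\grass(k,n)$ is covered by lines, so the classes $\ell-\ell_i$ are effective.
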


It is not clear whether the assumption ``very general'' can be replaced with ``general.'' For example, in the case of $\grass(2,4)$ it can be. Indeed, if the points of $\Gamma$ are general and $r=2=\deg \grass(2,4)$ then $\ceff_1(X_\Gamma)$ is S-Generated (Proposition \ref{prop:curves_g24}). Coskun-Lesieutre-Ottem \cite{clo} use a similar argument to describe curves on blow-ups of $\P^n$ at very general points. By taking cones over lower-dimensional cycles, they are able to inductively describe pseudoeffective cones of higher dimension. Their argument does not immediately generalize to $\grass(k,n)$. One approach is to consider $\pgrass(k,n)$, the Grassmannian of $k$-planes in $\P^n$. If $Z \subset \pgrass(k,n)$ is a subvariety then
\[
Y = \bigcup_{W \in Z} W \subset \P^n
\]
is a subvariety of projective space. By embedding this in $\P^{n+1}$ and taking cones over the $k$-planes $W$, we get a variety $Y'$ in $\P^{n+1}$ swept out by $(k+1)$-planes. That is, we obtain a subvariety $Z' \subset \pgrass(k+1,n+1)$. However, the $W'$ in $Z'$ are necessarily in special position: they are contained in the Schubert variety of $(k+1)$-planes containing the cone point. We explore this idea at the end of Section \ref{sec:infinitely_gen} to show that the behavior of $\ceff_1(X_\Gamma)$ depends dramatically on the configuration of the points of $\Gamma$ (c.f. Corollary \ref{cor:infinitely_gen_sigma3} and Proposition \ref{prop:points_on_a_line}).

Another difficulty, and one less peculiar to the case of Grassmannians, is that the description of pseudoeffective cones on blow-ups is intimately related to interpolation problems. Already in the case of $\P^2$ blown-up at 10 points we have only a conjectural description of the effective cone. Assuming this SHGH conjecture (\cite{harbourne}, \cite{gimi}, \cite{hirschowitz}), we show that $\ceff_1(X^{2,5}_r)$ is not finitely generated, where $X^{2,5}_r$ is the blow-up of $\grass(2,5)$ at $r \geq 6$ very general points. In fact, we show that this is true in the more general setting of Fano manifolds of index $n-1$ (Definition \ref{def:fanomfld}).

\begin{thm*}[\ref{thm:fanomfld_infinite}] Assume the SHGH conjecture holds for 10 very general points in $\P^2$. Let $Z$ be a Fano manifold of index $n-1$ such that $-K_Z = (n-1)H$ for $H$ ample. If $d=H^n$ and $X$ is the blow-up of $Z$ at $d+1$ very general points, then $\ceff_1(X)$ is not finitely generated.
\end{thm*}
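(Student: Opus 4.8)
The plan is to cut $Z$ down to a del Pezzo surface through $\Gamma$ and to push forward the infinitely many extremal curve classes that the SHGH conjecture produces on the plane blown up at $10$ points.

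First I would record the anticanonical class. Writing $H$ also for its pullback and $E_1,\dots,E_{d+1}$ for the exceptional divisors, the blow-up formula gives $K_X=\pi^*K_Z+(n-1)\sum_i E_i$, so with $A:=H-\sum_i E_i$ one has $-K_X=(n-1)A$. Assuming $H$ very ample, very general points impose independent conditions on $|H|$, so the system $V=|H\otimes I_\Gamma|$ of hyperplane sections through $\Gamma$ has projective dimension $(n+d-1)-(d+1)-1=n-3$ and expected base locus $S$ of dimension $2$. I would take $S$ to be this base locus (equivalently, the intersection of $n-2$ general members of $V$); by adjunction $-K_S=H|_S$ and $(-K_S)^2=H^n=d$, so $S$ is a del Pezzo surface of degree $d$ containing $\Gamma$, smooth for $\Gamma$ general. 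Its strict transform $\widetilde S\subset X$ is $\mathrm{Bl}_\Gamma S$; identifying $S$ with $\P^2$ blown up at $9-d$ points exhibits $\widetilde S$ as $\P^2$ blown up at $10$ points. A computation with the blow-up formula on $S$ yields the key identity $A|_{\widetilde S}=-K_{\widetilde S}$, whence $(-K_X)|_{\widetilde S}=(n-1)(-K_{\widetilde S})$.

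Next, the SHGH hypothesis for $10$ very general points makes $\ceff_1(\widetilde S)$ non-polyhedral: $\widetilde S$ carries infinitely many $(-1)$-curves $C_1,C_2,\dots$, each spanning an extremal ray of $\ceff_1(\widetilde S)$ and of unbounded degree. Let $\iota\colon\widetilde S\hookrightarrow X$. From $\iota^*H=-K_{\widetilde S}+\sum_i \iota^*E_i$ one gets $H\cdot\iota_*C_j=1+\sum_i(E_i\cdot\iota_*C_j)$, which is unbounded, so the pushforwards $\iota_*C_j$ realize infinitely many distinct classes in $N_1(X)$, all lying on the affine hyperplane $\{-K_X=n-1\}$.

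The crux — and the step I expect to be the main obstacle — is to promote these infinitely many distinct classes to infinitely many extremal rays of $\ceff_1(X)$. One cannot simply exhibit $\iota_*\ceff_1(\widetilde S)$ as a non-finitely-generated face: because $\iota^*\colon N^1(X)\to N^1(\widetilde S)$ is injective, no nonzero nef class of $X$ restricts to $0$ on $\widetilde S$, so the image subcone is not cut out by a supporting hyperplane; moreover $\iota_*$ has a $(9-d)$-dimensional kernel, so a priori the extremal structure could collapse. My plan is instead to support the rays individually by nef divisors built on $X$ itself, using the mechanism applied to divisors earlier in the paper: for infinitely many $j$ I would construct a nef class $N_j$ on $X$, as a strict transform of a hypersurface section of $Z$ in a projective embedding (nef as the pullback of a base-point-free system), whose restriction $\iota^*N_j$ is a nef class on $\widetilde S$ supporting $C_j$, and then show $N_j^\perp\cap\ceff_1(X)=\R_{\ge0}\,\iota_*C_j$. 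Establishing that the available nef cone of $X$ is rich enough to support infinitely many of the $C_j$ is the heart of the matter and is where the very general position of $\Gamma$ and the geometry of the embedding must be used. Finally I would verify the auxiliary point that, as $S$ varies in its $(n-3)$-dimensional family and over identifications $S\cong\mathrm{Bl}_{9-d}\P^2$, the resulting $10$ points of $\P^2$ may be taken very general so that SHGH applies, and I would treat the low-degree del Pezzo cases (where $H$ fails to be very ample, and the $\P^1\times\P^1$ model in degree $8$) separately.
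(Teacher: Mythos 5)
Your setup (the del Pezzo surface $S$ cut out by hyperplane sections through $\Gamma$, its strict transform $\widetilde S\cong\mathrm{Bl}_{10}\P^2\subset X$, and the identity $\iota^*H=-K_{\widetilde S}+\sum_m f_m$) agrees with the paper's. But the step you yourself flag as ``the heart of the matter'' is not merely deferred --- it cannot be carried out for the classes you chose. For any $(-1)$-curve $C\subset\widetilde S$ other than one of the $f_m$, set $\gamma_m=C\cdot f_m\geq 0$. Adjunction gives $-K_{\widetilde S}\cdot C=1$, so your own identity yields
\[
\iota^*H\cdot C \;=\; \bigl(-K_{\widetilde S}+\textstyle\sum_m f_m\bigr)\cdot C \;=\; 1+\sum_m\gamma_m .
\]
Hence in $N_1(X)$ (take $Z=\grass(2,5)$, or any case with $N_1(Z)$ of rank one) the pushforward is
\[
\iota_*C \;=\; \Bigl(1+\sum_m\gamma_m\Bigr)\ell-\sum_m\gamma_m\hat f_m \;=\; \ell+\sum_m\gamma_m\bigl(\ell-\hat f_m\bigr),
\]
a nonnegative combination of the effective classes $\ell$ and $\ell-\hat f_m$ (strict transforms of lines missing, respectively passing through, the blown-up points). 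So $\iota_*C_j$ is \emph{never} extremal, and all these pushforwards lie in the fixed finitely generated subcone spanned by $\ell$, $\ell-\hat f_m$, $\hat f_m$; producing infinitely many of them, even of unbounded degree, says nothing about finite generation of $\ceff_1(X)$. Likewise, no nef class $N_j$ with $N_j^\perp\cap\ceff_1(X)=\R_{\geq0}\,\iota_*C_j$ can exist: $N_j\cdot\iota_*C_j=0$ together with nefness forces $N_j\cdot\ell=0$ and $N_j\cdot(\ell-\hat f_m)=0$ for every $m$ with $\gamma_m>0$, so the orthogonal face always contains the non-proportional class $\ell$.

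The paper's proof avoids exactly this trap: its extremal classes are not negative curves on $\widetilde S$ but \emph{irrational nef classes} $D_\delta=h-\frac13\sum_i e_i-\delta' f_1-\delta\sum_{j\geq2}f_j$ with $D_\delta^2=0$, lying on the boundary of the positive cone of $\widetilde S$, chosen to be orthogonal to $\ker\iota_*$ and strictly positive on a cone $\Gamma$ complementary to $\iota_*\ceff_1(\widetilde S)$ inside $\ceff_1(X)$ (Lemma \ref{lemma:extremal_curves}). SHGH enters --- via de Fernex's theorem --- precisely where you did not use it: to prove that these $D_\delta$ are nef and that $D_\delta^\perp\cap\ceff_1(\widetilde S)=\R D_\delta$ (Cauchy--Schwarz against $D_\delta^2=0$). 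The pushforwards $\iota_*D_\delta$, as $\delta$ ranges over an interval, then give a continuum of extremal rays. Note also that your appeal to SHGH for the existence of infinitely many $(-1)$-curves on $\mathrm{Bl}_{10}\P^2$ is unnecessary (that is unconditional); the conjecture is genuinely needed only for the nefness statements above. To repair your argument you would have to abandon the $(-1)$-curves and work with boundary classes of the positive cone killed by $\ker\iota_*$, which is what the paper does.
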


\subsection*{Structure of this paper} In Section \ref{sec:preliminaries} we recall some facts about Grassmannians and pseudoeffective cones. In Section \ref{sec:group_actions} we study certain group actions on Grassmannians and compute their orbits. We then extend the group action to the blow-up of the Grassmannian at the fixed points of the action and a standard argument allows us to compute the resulting effective cones. Section \ref{sec:divisors} treats the case of divisors and curves. In Section \ref{sec:examples} we demonstrate our techniques in the case of some examples. In Section \ref{sec:infinitely_gen} we prove Theorem \ref{thm:fanomfld_infinite} to show that pseudoeffective cones on blow-ups of Fano manifolds of index $n-1$ are not finitely generated, assuming SHGH.

\subsection*{Acknowledgments} I am very grateful to my advisor, Izzet Coskun, for all of his help and guidance. I would also like to thank Dave Anderson, Tabes Bridges, Mihai Fulger, John Lesieutre, Eric Riedl, Rick Rischter, and Tim Ryan for their helpful conversations and feedback. I am also grateful for the referee's valuable input.

\section{Preliminaries}\label{sec:preliminaries}
In this section we collect some facts and lemmas that will be useful throughout the paper.

\begin{definition} Let $V$ be an $n$-dimensional vector space and $F_\bullet$ a complete flag
\[
F_0 \subset F_1 \subset \cdots \subset F_n = V.
\]
Let $\lambda = (\lambda_1, \dots, \lambda_k)$ be a partition such that $n-k \geq \lambda_1 \geq \cdots \geq \lambda_k \geq 0$. The \emph{Schubert variety} $\Sigma_\lambda(F_\bullet)$ is defined as the set of $W \in \grass(k,n)$ such that $\dim W \cap F_{n-k+i-\lambda_i} \geq i$ for $1 \leq i \leq k$.
If $F_\bullet$ and $F'_\bullet$ are two different flags, then the Schubert varieties $\Sigma_\lambda(F_\bullet)$ and $\Sigma_\lambda(F'_\bullet)$ represent the same Chow class. We therefore denote by $\sigma_\lambda = [\Sigma_\lambda(F_\bullet)]$ the corresponding class in $A^{|\lambda|}(\grass(k,n))$, where $|\lambda| = \sum \lambda_i$. The Schubert classes generate the Chow ring $A^\ast(\grass(k,n))$. We will typically suppress the flag in our notation and we will adopt the convention of omitting trailing zeros when writing partitions. We refer to \cite{eisenbud-harris} for details and proofs.
\end{definition}

The complete linear system $|\sigma_1|$ defines the \emph{Pl\"ucker embedding} $\grass(k,n) \to \P(\Lambda^k V)$. We may define the Pl\"ucker embedding in coordinates as follows. Given a $k$-dimensional subspace $W$ of an $n$-dimensional vector space $V$, we pick a basis $w_1, \dots, w_k$ of $W$. Then the map
\[
W \mapsto w_1 \wedge \cdots \wedge w_k \in \Lambda^k V
\]
is well-defined up to a constant, and therefore gives a well-defined map to $\P(\Lambda^k V)$.

We will often implicitly think of $\grass(k,n) = \grass(k,V)$ as embedded in $\P(\Lambda^k V)$ via the Pl\"ucker embedding. It will occasionally be convenient to identify $\grass(k,n)$ with $\pgrass(k-1,n-1)$, the Grassmannian of $(k-1)$-dimensional linear subspaces of $\P^{n-1}$.

In general it is not the case that every variety of class $\sigma_\lambda$ is a Schubert variety. In fact, varieties of class $\sigma_\lambda$ can sometimes pass through more general points than a Schubert variety $\Sigma_\lambda$. In some cases, however, all varieties $Y \subset \grass(k,n)$ of class $\sigma_\lambda$ are Schubert varieties. Such classes and their corresponding partitions are called \emph{rigid}. A complete classification of rigid Schubert varieties is given in \cite{coskun-rigidity}.

\subsection*{Singularities of Schubert varieties} Schubert varieties are not smooth in general. Let $\Sigma_\lambda$ be a Schubert variety corresponding to the partition $\lambda = (\lambda_1, \dots, \lambda_k)$. Then $\Sigma_\lambda$ is the disjoint union of the \emph{Schubert cells}
\[
\Sigma^\circ_\mu = \Sigma_\mu \setminus \left(\bigcup_{\nu > \mu}\Sigma_{\nu}\right)
\]
where $\mu=(\mu_1,\dots,\mu_k) \geq \lambda$. A formula for the multiplicity of $\Sigma_\lambda$ along a Schubert cell was given by Rosenthal-Zelevinsky.

\begin{thm}[\protect{\cite[Theorem 1]{rosenthal-zelevinsky}}]\label{thm:r-z} Let $\lambda$ and $\mu$ be partitions such that $\mu \geq \lambda$. Put $t_i = n-k+i - \lambda_i$. then the multiplicity of $\Sigma_\lambda$ along $\Sigma_\mu^\circ$ is
\[
(-1)^{s_1+\cdots+s_k} \det \begin{pmatrix}
\binom{t_1}{-s_1} & \cdots & \binom{t_k}{-s_k}\\
\binom{t_1}{1-s_1} & \cdots &  \binom{t_k}{1-s_k}\\
\vdots & & \vdots\\
\binom{t_1}{k-1-s_1} & \cdots & \binom{t_k}{k-1-s_k}\\
\end{pmatrix},
\]
where $s_i = \#\{\mu_j : \mu_j-j < \lambda_i-i\}$.
\end{thm}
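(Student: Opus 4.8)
The plan is to compute the multiplicity as the degree of an explicit tangent cone, using the fact that for any point $p$ of a variety $V$ the local multiplicity $\operatorname{mult}_p V$ equals the degree of the projectivized tangent cone $\mathbb{P}(C_p V)$, where $C_p V = \operatorname{Spec}\operatorname{gr}_{\mathfrak{m}_p}\mathcal{O}_{V,p}$. First I would exploit the Borel action: the group $B$ stabilizing the flag $F_\bullet$ preserves $\Sigma_\lambda$ while acting transitively on the cell $\Sigma_\mu^\circ$, so $\operatorname{mult} \Sigma_\lambda$ is constant along $\Sigma_\mu^\circ$ and may be computed at the single torus-fixed coordinate point $p_\mu \in \Sigma_\mu^\circ$. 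This reduces the statement to one affine-local calculation.

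Next I would set up the standard affine chart $U_\mu \cong \mathbb{A}^{k(n-k)}$ around $p_\mu$, in which a point of $\grass(k,n)$ is the row span of a $k \times n$ matrix in echelon form determined by $\mu$, the free entries serving as coordinates and $p_\mu$ sitting at the origin. The incidence conditions $\dim(W \cap F_{t_i}) \geq i$, with $t_i = n-k+i-\lambda_i$, translate into rank inequalities on the submatrices obtained by restricting to the first $t_i$ columns; equivalently $\Sigma_\lambda \cap U_\mu$ is cut out by the vanishing of an appropriate collection of minors. The structural heart of the argument is to pass to the tangent cone by taking the initial (lowest-degree) forms of these minors. I expect that, after eliminating the variables whose minors carry a linear leading term (these split off a smooth affine factor that contributes nothing to the multiplicity), the tangent cone is a one-sided \emph{ladder determinantal variety}: a cone whose ideal is generated by minors of a generic matrix subject to a staircase shape recording the relative positions of $\mu$ and $\lambda$. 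The integers $s_i = \#\{\mu_j : \mu_j - j < \lambda_i - i\}$ mark exactly where the steps of this staircase fall.

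With the tangent cone identified, the multiplicity is its degree as a projective variety, and here I would invoke the classical Giambelli degree formula for (ladder) determinantal loci, which writes such a degree as a determinant of binomial coefficients. Assembling the individual rank contributions indexed by $i = 1,\dots,k$ into a single $k \times k$ determinant produces entries $\binom{t_j}{(a-1)-s_j}$ with $a$ the row index, matching the displayed matrix; the prefactor $(-1)^{s_1+\cdots+s_k}$ is the normalization sign forced by the conventions of the determinantal degree formula (note that $\binom{t_j}{-s_j}$ vanishes whenever $s_j > 0$, so the determinant has a Jacobi–Trudi shape) and is precisely what renders the positive integer multiplicity with the correct orientation.

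The main obstacle will be the precise identification of the tangent cone in the second step, together with the verification that its degree is computed by the compact determinant above. Concretely, one must show that the initial forms of the generating minors already generate the initial ideal — that is, that these minors constitute a standard basis for the $\mathfrak{m}_p$-adic filtration — so that no lower-order relations are lost and the ladder determinantal structure is exact; the Cohen–Macaulayness of ladder determinantal varieties is what makes the passage from degree to multiplicity clean. The residual combinatorial work is to reconcile the index conventions, showing that the staircase data translate exactly into the shifts $s_j$ and the column heights $t_j$ and that the general Giambelli determinant collapses to the displayed $k \times k$ form.
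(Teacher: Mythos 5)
The first thing to note is that the paper contains no proof of this statement: Theorem \ref{thm:r-z} is imported verbatim from Rosenthal--Zelevinsky \cite{rosenthal-zelevinsky}, and the paper only applies it (to identify the coefficients $d_\lambda$ after Corollary \ref{cor:lin_gen1pt}, and the multiplicity loci used in Theorem \ref{thm:divisors_2pts}). So your attempt cannot be compared against an internal argument, only against the literature.

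Against that yardstick, your outline follows the standard route and its skeleton is sound: the multiplicity is constant along $\Sigma_\mu^\circ$ because the Borel group acts transitively on the cell while preserving $\Sigma_\lambda$, so one computes at the torus-fixed point $p_\mu$; in the echelon chart $U_\mu$ the Schubert conditions become rank conditions on submatrices, and the local model is a one-sided ladder determinantal variety. Two observations. First, the step you single out as the main obstacle --- showing that the initial forms of the minors generate the initial ideal --- is not actually needed: after row-reducing against the pivot columns determined by $\mu$, the equations of $\Sigma_\lambda\cap U_\mu$ become minors of the generic matrix of free coordinates supported on a ladder region, and these are homogeneous; hence $\Sigma_\lambda\cap U_\mu$ is already a cone with vertex $p_\mu$ (times an affine factor) and coincides with its own tangent cone, so no standard-basis argument is required. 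Second, the place where your sketch is genuinely incomplete is the degree computation: there is no off-the-shelf ``classical Giambelli formula'' that directly yields the displayed determinant for ladder determinantal loci. The degree of a one-sided ladder determinantal variety is a theorem in its own right (Abhyankar, Herzog--Trung, Krattenthaler), typically proved by a Gessel--Viennot count of non-intersecting lattice paths, and converting that count into the stated $k\times k$ determinant with the shifts $s_i$ and the sign $(-1)^{s_1+\cdots+s_k}$ is precisely the combinatorial content of Rosenthal--Zelevinsky's paper. As a proposal this is a reasonable reconstruction of the known strategy, but as written it presupposes the hard part rather than proving it.
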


By \cite[Lemma 4.1]{coskun-rigidity}, the maximum multiplicity of any variety of class $\sigma_\lambda$ is at most that of the Schubert variety $\Sigma_\lambda$.

\subsection*{Intersection theory on point blow-ups of Grassmannians} Numerical and rational equivalence coincide on point blow-ups of Grassmannians. Let $\Gamma$ be a set of $r$ points $p_1,\dots,p_r$ on $\grass(k,n)$, and let $\pi:X_\Gamma\to\grass(k,n)$ denote the blow-up of $\grass(k,n)$ along $\Gamma$. We will denote by $H$ pullback of the Schubert class $\sigma_1$ and by $E_i$ the exceptional divisor corresponding to $p_i$. The following intersection formulas are standard:
\[
	E_i^{k(n-k)}= (-1)^{k(n-k)+1}, \qquad H \cdot E_i = 0, \qquad E_i \cdot E_j = 0,\quad i\neq j.
\]
The exceptional divisors $E_i$ are isomorphic to $\P^{k(n-k)-1}$. We will denote by $E_i^{[m]}$ a codimension-$m$ linear cycle contained in $E_i$, and by $E_{i,[m]}$ a dimension-$m$ linear cycle contained in $E_i$. The Chow group $A_1(\grass(k,n))$ is free of rank one, generated by the Schubert cycle $\sigma_{n-k,n-k,\dots,n-k,n-k-1}$. We will denote by $\ell$ the pullback of this class via $\pi$, and by $\ell_i$ the class of a line contained in $E_i$. For the remaining Schubert classes we will abuse notation and use the symbol $\sigma_\lambda$ to denote their pullbacks in $X_\Gamma$.

The intersection product on Grassmannians is the subject of Schubert calculus, which we shall briefly describe. We refer to \cite{eisenbud-harris} for details and proofs. The two main results are \emph{Pieri's formula}, which describes the intersection product for \emph{special partitions}, i.e. partitions of the form $\lambda = (\lambda_1, 0 ,\dots, 0)$, and \emph{Giambelli's formula} which gives a method of writing any Schubert class as a polynomial in the special classes.

\begin{thm}[Pieri's formula \protect{\cite[Prop. 4.6]{eisenbud-harris}}]
Let $\lambda$ be a special partition and $\sigma_\mu$ any Schubert class, with $\mu=(\mu_1, \dots, \mu_k)$. Then
\[
	\sigma_\lambda \cdot \sigma_\mu = \sum_{\substack{\mu_i \leq \nu_i \leq \mu_{i-1}\\|\nu|=|\lambda|+|\mu|}} \sigma_\nu.
\]
\end{thm}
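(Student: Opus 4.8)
The plan is to prove Pieri's formula geometrically, by realizing the product $\sigma_a \cdot \sigma_\mu$ (writing $\sigma_a$ for the special class $\sigma_{(a,0,\dots,0)}$) as the class of a transverse intersection of Schubert varieties and then enumerating its irreducible components. Recall that the special class is represented by the locus $\Sigma_a(F_\bullet) = \{ W : \dim(W \cap F_{n-k+1-a}) \geq 1 \}$, a single condition asking that $W$ meet a fixed subspace nontrivially, while $\Sigma_\mu(G_\bullet)$ imposes the incidence conditions $\dim(W \cap G_{n-k+i-\mu_i}) \geq i$ for $1 \leq i \leq k$.

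First I would invoke the homogeneity of $\grass(k,n)$. Because $\gl_n$ acts transitively on the Grassmannian, Kleiman's transversality theorem lets me move the flag $G_\bullet$ by a general group element so that $\Sigma_a(F_\bullet)$ and $\Sigma_\mu(G_\bullet)$ meet properly and generically transversally. (In positive characteristic, where generic transversality is more delicate, one instead argues that the scheme-theoretic intersection is generically reduced, or observes that the structure constants of the Chow ring are integers that are deformation-invariant.) Granting this, $\sigma_a \cdot \sigma_\mu$ equals the sum, with multiplicity one, of the classes of the components of $\Sigma_a(F_\bullet) \cap \Sigma_\mu(G_\bullet)$, so it remains to identify these components.

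Next I would analyze the intersection directly. A subspace $W$ lying in both Schubert varieties satisfies all of the $\mu$-incidences together with the extra requirement that $W$ meet $F_{n-k+1-a}$. Working in an affine chart around a generic point of a prospective component and writing each incidence as the vanishing of explicit maximal minors of a $k \times n$ coordinate matrix, I would show that imposing the single additional condition forces exactly one of the existing jump indices to increase, and that the admissible ways to do this are precisely those producing a partition $\nu$ with $|\nu| = |\mu| + a$ subject to the interlacing $\mu_i \leq \nu_i \leq \mu_{i-1}$ (with $\mu_0 = n-k$). Each resulting locus is a Schubert variety $\Sigma_\nu(H_\bullet)$ for a flag $H_\bullet$ assembled from $F_\bullet$ and $G_\bullet$, and a codimension count gives $\mathrm{codim}\,\Sigma_\nu = a + |\mu|$, confirming that these are exactly the components.

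The main obstacle is the bookkeeping in this last step: verifying simultaneously that every admissible $\nu$ actually occurs, that no extraneous components appear, and, most delicately, that the intersection is transverse along each component so that every coefficient is $1$ rather than a larger multiplicity. I expect to handle the transversality by a tangent-space computation in the local chart, checking that the differentials of the defining minors for the two Schubert conditions span the expected normal directions at a generic point of $\Sigma_\nu$. An alternative route that I would keep in reserve, sidestepping some of this casework, is to use the duality pairing on $\grass(k,n)$ to rewrite the coefficient of $\sigma_\nu$ as the triple intersection number $\sigma_a \cdot \sigma_\mu \cdot \sigma_{\nu^\vee}$ against the complementary class, a zero-dimensional count of $k$-planes meeting three general subspaces, and then to evaluate that count combinatorially; this reduces the whole statement to showing the relevant number equals $1$ exactly when the interlacing condition holds and $0$ otherwise.
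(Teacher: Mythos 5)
The paper offers no proof of this statement---it is quoted directly from \cite[Prop.~4.6]{eisenbud-harris}---so your proposal has to be measured against the argument in that reference. Your primary route contains a genuine gap, and it is not one that more careful bookkeeping or a tangent-space computation can close: for flags $F_\bullet$ and $G_\bullet$ in general position, the intersection $\Sigma_a(F_\bullet)\cap\Sigma_\mu(G_\bullet)$ is \emph{irreducible} whenever it is nonempty. (General position means the pair $(F_\bullet,G_\bullet)$ can be moved by $\gl_n$ to a pair of opposite flags, and intersections of Schubert varieties defined with respect to opposite flags are Richardson varieties, which are irreducible.) Consequently there are no components to enumerate: the Pieri sum $\sum_\nu \sigma_\nu$ is the expansion of the class of a single irreducible variety in the Schubert basis, not a list of irreducible components of the intersection. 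You can see this already in $\grass(2,4)$: the lines in $\P^3$ meeting two general lines $L_1,L_2$ form an irreducible surface, isomorphic to $L_1\times L_2$ via $(p,q)\mapsto \overline{pq}$, which is neither a translate of $\Sigma_2$ (lines through a point) nor of $\Sigma_{1,1}$ (lines in a plane), yet its class is $\sigma_2+\sigma_{1,1}$. So the step in which ``each resulting locus is a Schubert variety $\Sigma_\nu(H_\bullet)$'' is simply false, and the transversality check you plan along each component has nothing to attach to.

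Your reserve route, by contrast, is correct and is essentially the proof in the cited source. By duality on $\grass(k,n)$, the coefficient of $\sigma_\nu$ in $\sigma_a\cdot\sigma_\mu$ equals the triple intersection number $\sigma_a\cdot\sigma_\mu\cdot\sigma_{\nu^\vee}$, which Kleiman transversality converts into a count of $k$-planes satisfying three Schubert conditions with respect to three general flags; the heart of the Eisenbud--Harris argument is then a linear-algebra dimension count on the subspaces cut out by the $\mu$- and $\nu^\vee$-conditions, showing that this count is $1$ exactly when $\mu_i\le\nu_i\le\mu_{i-1}$ for all $i$ (with $\mu_0=n-k$) and $|\nu|=|\mu|+a$, and $0$ otherwise. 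You should promote that reserve argument to the main one and discard the component enumeration entirely. If you insist on a genuinely geometric decomposition of the intersection, you must instead degenerate the flags into special position (in the spirit of Hodge--Pedoe or Vakil's geometric Littlewood--Richardson rule), where the intersection really does break into Schubert-type pieces---but that is a different and more delicate argument than the one you sketched.
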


\begin{thm}[Giambelli's formula \protect{\cite[Prop. 4.16]{eisenbud-harris}}]
Let $\lambda = (\lambda_1, \dots, \lambda_k)$ be a partition and $\sigma_\lambda$ the corresponding Schubert class. Then
\[
	\sigma_\lambda = \det \begin{pmatrix}
	\sigma_{\lambda_1} && \sigma_{\lambda_{1}+1} && \sigma_{\lambda_1 + 2} && \cdots && \sigma_{\lambda_1 + k-1}\\
	\sigma_{\lambda_2 - 1} && \sigma_{\lambda_2} && \sigma_{\lambda_2 + 1} && \cdots && \sigma_{\lambda_2 + k-2}\\
	\vdots && \vdots && \vdots && \ddots && \vdots\\
	\sigma_{\lambda_k - k + 1} && \sigma_{\lambda_k - k + 2} && \sigma_{\lambda_k - k +3} && \cdots && \sigma_{\lambda_k}
	\end{pmatrix}.
\]
\end{thm}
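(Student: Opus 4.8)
The plan is to prove the determinantal formula by reducing it to the combinatorics already encoded in Pieri's formula. Write $D_\lambda = \det\bigl(\sigma_{\lambda_i - i + j}\bigr)_{1 \le i,j \le k}$ for the right-hand side, with the conventions $\sigma_0 = 1$, $\sigma_m = 0$ for $m < 0$, and $\sigma_m = 0$ for $m > n-k$ (so that out-of-range entries vanish); the goal is to show $D_\lambda = \sigma_\lambda$ in $A^{|\lambda|}(\grass(k,n))$. Since Pieri's formula is available, I would use it as the sole engine and treat everything else as determinant bookkeeping.

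First I would record the structural facts I need. The special classes $\sigma_1, \dots, \sigma_{n-k}$ generate $A^\ast(\grass(k,n))$ as a ring, since iterating Pieri's formula from the empty partition reaches every $\sigma_\mu$, and conversely every monomial in the special classes expands into a $\Z_{\ge 0}$-combination of Schubert classes. Next comes the key triangularity statement: writing $\mu = (\lambda_2, \dots, \lambda_k)$, Pieri's formula gives $\sigma_{(\lambda_1)} \cdot \sigma_\mu = \sigma_\lambda + \sum_\nu \sigma_\nu$, where every $\nu$ occurring in the sum satisfies $\nu_1 > \lambda_1$. Indeed, of the $\lambda_1$ boxes added by the horizontal-strip rule, at most $\lambda_2$ can be accommodated in rows below the first (the available columns telescope to $\mu_1 = \lambda_2$), so at least $\lambda_1 - \lambda_2$ must sit in the top row; equality forces $\nu = \lambda$. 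Thus $\lambda$ is the unique partition occurring whose first part attains the minimum, and the Schubert basis is determined, by downward induction in lexicographic order (starting from the single-row class $\sigma_{(d)} = \sigma_d$, where $d = |\lambda|$), by the Pieri rule together with the values on special classes.

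The core of the direct argument is then an induction on the number of rows. I would expand $D_\lambda$ by cofactors along its top row, whose entries $\sigma_{\lambda_1}, \sigma_{\lambda_1+1}, \dots, \sigma_{\lambda_1+k-1}$ are all special classes to which Pieri applies. Each cofactor is, up to sign, a $(k-1)\times(k-1)$ Giambelli determinant attached to $(\lambda_2,\dots,\lambda_k)$ with one column deleted; by the inductive hypothesis these cofactors are genuine Schubert classes. Substituting and applying Pieri to each product $\sigma_{\lambda_1+j-1}\cdot(\text{cofactor})$ rewrites $D_\lambda$ as an alternating sum of Schubert classes $\sigma_\nu$, and the claim is that everything cancels except a single copy of $\sigma_\lambda$.

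The main obstacle is precisely this cancellation: after expansion one is left with a large signed sum of Pieri contributions, and one must produce the bookkeeping — most cleanly a sign-reversing involution on the indexing data, or a telescoping of adjacent cofactor terms — that annihilates every $\sigma_\nu$ with $\nu \ne \lambda$ and leaves the coefficient of $\sigma_\lambda$ equal to one. This is the combinatorial heart of the classical Jacobi--Trudi identity, and it is where all the real work lies. An alternative that isolates this difficulty cleanly is to build the surjective graded ring homomorphism $\phi \colon \Lambda \to A^\ast(\grass(k,n))$ from the ring of symmetric functions determined by $h_i \mapsto \sigma_i$; since $s_{(i)} = h_i$ and the Pieri rules for Schur functions and for Schubert classes are identical, the triangularity recorded above forces $\phi(s_\lambda) = \sigma_\lambda$ for every $\lambda$ by the downward lexicographic induction, and applying $\phi$ to the Jacobi--Trudi identity $s_\lambda = \det\bigl(h_{\lambda_i - i + j}\bigr)$ yields the formula at once. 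Either way the weight of the proof rests on the Jacobi--Trudi cancellation rather than on any further geometric input.
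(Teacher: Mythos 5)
This statement is quoted in the paper's preliminaries as a standard background fact, with a citation to Eisenbud--Harris; the paper supplies no proof of its own, so there is nothing internal to compare against, and your proposal must be judged on its own terms. Your first route (cofactor expansion along the top row plus Pieri) is, as you yourself say, only a reduction to the Jacobi--Trudi cancellation, which you do not carry out; taken alone it would be a genuine gap. But your second route closes it: the homomorphism $\phi\colon\Lambda\to A^\ast(\grass(k,n))$ is well defined because $\Lambda$ is freely generated by the $h_i$ (with $h_i\mapsto 0$ for $i>n-k$, consistent with your convention $\sigma_m=0$ out of range), your triangularity computation is correct --- the telescoping bound shows $\nu_1\geq\lambda_1$ with equality only for $\nu=\lambda$, and Pieri is multiplicity-free, so the coefficient is exactly $1$ --- and the downward lexicographic induction in each degree then forces $\phi(s_\lambda)=\sigma_\lambda$ for $\lambda$ in the $k\times(n-k)$ box. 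One small point you gloss over: the Pieri expansion of $h_{\lambda_1}s_\mu$ in $\Lambda$ contains partitions $\nu$ with $\nu_1>n-k$ that are absent from the Grassmannian Pieri rule, so the induction must simultaneously establish $\phi(s_\nu)=0$ for such $\nu$; this follows from the same downward lexicographic induction, since then $\phi(h_{\nu_1})=0$ and all lex-larger terms vanish inductively, but it should be said explicitly. Granting the classical Jacobi--Trudi identity $s_\lambda=\det(h_{\lambda_i-i+j})$ in $\Lambda$ --- a purely combinatorial theorem with independent proofs (e.g.\ the Lindstr\"om--Gessel--Viennot involution), so no circularity --- applying $\phi$ yields the formula, and this transfer argument is in fact one of the standard textbook proofs. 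In short: route one is an honest sketch with its hard step flagged but unproven; route two is correct modulo the one boundary detail above.
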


\subsection*{Numerical spaces} Let $X = \grass(k,n)$ and let $\Gamma$ be a set of $r$ points on $X$. We denote by $X_\Gamma$ the blow-up of $X$ along $\Gamma$. For $0 \leq m \leq k(n-k)$, we write $N_m(X_\Gamma)$ for the $\R$-vector space of dimension-$m$ cycles on $X_\Gamma$ modulo numerical equivalence. We write $N^m(X_\Gamma)$ for the $\R$-vector space of codimension-$m$ cycles modulo numerical equivalence. The pullbacks of the Schubert classes $\sigma_\lambda$ such that $|\lambda| = m$, together with the exceptional classes $E_i^{[m]}$, $1 \leq i \leq r$, form an effective basis for $N^m(X_\Gamma)$.

\begin{definition}
A class in $N_m(X_\Gamma)$ is \emph{pseudoeffective} if it is the limit of classes of effective cycles. We denote by $\ceff_m(X_\Gamma)$ the closed convex cone in $N_m(X_\Gamma)$ consisting of pseudoeffective classes.
\end{definition}

The following standard lemma will be useful for describing effective cones.

\begin{lemma}[c.f. \cite{clo}]\label{lemma:positive_coeffs}
Let $Y \subset X_\Gamma$ be a subvariety of codimension $m$.
\begin{enumerate}
	\item If $Y \subset E_i$ for some $1 \leq i \leq r$ then $[Y] = bE_i^{[m]}$ for some $b > 0$.
	\item Otherwise, $Y = \sum_{|\lambda| = m} a_\lambda \sigma_\lambda - \sum_{i=1}^r b_iE_i^{[m]}$ with $a,b_i \geq 0$. The coefficient $b_i$ is equal to the multiplicity of $\pi(Y)$ in $\grass(k,n)$ at the point $p_i$.
\end{enumerate}
\end{lemma}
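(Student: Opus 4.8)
The plan is to work in the basis of $N^m(X_\Gamma)$ recorded above, namely the Schubert pullbacks $\sigma_\lambda$ with $|\lambda|=m$ together with the exceptional classes $E_i^{[m]}$, and to reduce the statement to two inputs: the classical positivity of the Schubert basis on $\grass(k,n)$, and the identification of the exceptional coefficients with multiplicities. Throughout write $N=k(n-k)=\dim X_\Gamma$, let $\iota_i\colon E_i\hookrightarrow X_\Gamma$ denote the inclusion, and recall that $E_i\cong\P^{N-1}$ has normal bundle $\mathcal O(-1)$, so that $c_1(N_{E_i})=-h$ for $h$ the hyperplane class on $E_i$.

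For part (1), if $Y\subset E_i$ then, viewing $Y$ as a cycle on $E_i\cong\P^{N-1}$, it has dimension $N-m$ and hence its class in $A_{N-m}(E_i)$ is $\deg(Y)$ times that of a linear subspace. Pushing forward along $\iota_i$—which is precisely how $E_i^{[m]}$ is defined—gives $[Y]=\deg(Y)\,E_i^{[m]}$, so $b=\deg(Y)>0$. This requires nothing beyond the projective geometry of the single exceptional divisor.

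For part (2), assume $Y\not\subset E_i$ for every $i$. Then $\pi$ restricts to a birational morphism $Y\to Z:=\pi(Y)$, so $Z$ is a codimension-$m$ subvariety of $\grass(k,n)$ and $Y$ is its strict transform. The class $[Z]=\sum_{|\lambda|=m}a_\lambda\sigma_\lambda$ has all $a_\lambda\geq0$: this is the standard effectivity of the Schubert basis, since $a_\lambda=[Z]\cdot\sigma_{\lambda^\vee}$, the pairing with the Poincar\'e-dual (complementary) Schubert class, is computed by a Kleiman-transverse intersection of $Z$ with a general translate of $\Sigma_{\lambda^\vee}$ and is therefore a non-negative count of points. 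Since $\pi$ is an isomorphism away from $E=\bigsqcup_iE_i$, the difference $\pi^*[Z]-[Y]$ is supported on $E$ and hence equals $\sum_ib_iE_i^{[m]}$ for some $b_i\in\Z$; recalling that pullbacks are again written $\sigma_\lambda$, this yields $[Y]=\sum_\lambda a_\lambda\sigma_\lambda-\sum_ib_iE_i^{[m]}$.

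It remains to identify $b_i=\operatorname{mult}_{p_i}Z$, which is the one substantive step and the expected main obstacle. I would extract $b_i$ by intersecting the relation with $\iota_{i*}[\Lambda]$ for a general linear $\Lambda\cong\P^m\subset E_i$. The pullback term contributes nothing, since the projection formula gives $\pi^*[Z]\cdot\iota_{i*}[\Lambda]=[Z]\cdot\pi_*\iota_{i*}[\Lambda]=0$ (as $\Lambda$ maps to a point); the terms with $j\neq i$ vanish by disjointness; and the self-intersection formula for $\iota_i$ (via $c_1(N_{E_i})=-h$) gives $E_i^{[m]}\cdot\iota_{i*}[\Lambda]=-1$. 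On the other hand $Y\cap E_i=\P\,C_{p_i}Z$ is the projectivized tangent cone, of dimension $N-m-1$, and for general $\Lambda$ it meets $\Lambda$ transversally in $\deg(\P\,C_{p_i}Z)=\operatorname{mult}_{p_i}Z$ reduced points, each a transverse intersection point of $Y$ with $\iota_{i*}[\Lambda]$ in $X_\Gamma$; hence $[Y]\cdot\iota_{i*}[\Lambda]=\operatorname{mult}_{p_i}Z$. Comparing the two computations gives $b_i=\operatorname{mult}_{p_i}Z$, a non-negative integer that is zero exactly when $p_i\notin Z$. The delicate points are the self-intersection bookkeeping (equivalently the relation $E_i^{[m]}=(-1)^{m-1}E_i^m$, which is consistent with the given $E_i^{N}=(-1)^{N+1}$) and the transversality of $Y$ and $\iota_{i*}[\Lambda]$ along the tangent cone; everything else follows formally from the basis description and Schubert positivity.
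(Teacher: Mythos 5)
The paper itself states this lemma without proof, citing it as standard (c.f.\ \cite{clo}), so your proposal has to be measured against the standard argument. Most of your write-up matches it and is correct: part (1) via degrees in $E_i\cong\P^{N-1}$; in part (2), Schubert positivity of $[Z]$ via Kleiman transversality, the localization sequence $A_*(E)\to A_*(X_\Gamma)\to A_*(X_\Gamma\setminus E)\to 0$ placing $\pi^*[Z]-[Y]$ in the span of the $E_i^{[m]}$, and the bookkeeping $\pi^*[Z]\cdot\iota_{i*}[\Lambda]=0$, $E_j^{[m]}\cdot\iota_{i*}[\Lambda]=0$ for $j\neq i$, $E_i^{[m]}\cdot\iota_{i*}[\Lambda]=-1$ (and your sign relation $E_i^{[m]}=(-1)^{m-1}E_i^m$ is consistent with the paper's $E_i^{k(n-k)}=(-1)^{k(n-k)+1}$). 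The gap is in the single step you flag as substantive: the claim that a general $\Lambda\cong\P^m\subset E_i$ meets $\P C_{p_i}Z$ \emph{transversally in $\operatorname{mult}_{p_i}Z$ reduced points}, each a transverse intersection point of $\Lambda$ with $Y$ inside $X_\Gamma$. This is false whenever the projectivized tangent cone is non-reduced, which happens for entirely ordinary $Y$. Concretely, let $\pi(Y)=Z$ be a curve with a cusp at $p_i$ (locally $y^2=x^3$): then $\operatorname{mult}_{p_i}Z=2$, the strict transform $Y$ is smooth, but $Y\cap E_i$ is a \emph{single} point $q$ of length two, and at $q$ one has $T_qY\subset T_qE_i$. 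Any $\Lambda$ meeting $\P C_{p_i}Z$ at all must pass through $q$, and the intersection of $Y$ with such a $\Lambda$ is never transverse; no genericity in $\Lambda$ can repair this. In general, counting reduced points of a general complementary linear section computes the degree of the \emph{reduced} tangent cone, not the multiplicity, so your identification $[Y]\cdot\iota_{i*}[\Lambda]=\operatorname{mult}_{p_i}Z$ is only justified when $\P C_{p_i}Z$ is generically reduced (and even then you still need that $Y$ is smooth at the intersection points and meets $E_i$ transversally there, which follows from generic reducedness but is not said).

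The conclusion is nonetheless true, and the repair is short: avoid transversality entirely by intersecting with the Cartier divisor $E_i$ rather than with $\Lambda$. Since $Y\not\subset E_i$, the divisor intersection is proper and is computed by lengths of the scheme-theoretic intersection (Fulton, Ch.~2), giving $\iota_i^*[Y]=[Y\cap E_i]=[\P C_{p_i}Z]$ as cycles, where $\deg[\P C_{p_i}Z]=\operatorname{mult}_{p_i}Z$ is precisely the definition of multiplicity as the degree of the tangent cone, non-reduced components counted with their multiplicities. Now apply $\iota_i^*$ to the relation $[Y]=\pi^*[Z]-\sum_j b_jE_j^{[m]}$: one has $\iota_i^*\pi^*[Z]=0$ because $\pi\circ\iota_i$ is constant, $\iota_i^*E_j^{[m]}=0$ for $j\neq i$ by disjointness, and $\iota_i^*E_i^{[m]}=\iota_i^*\iota_{i*}[\P^{N-m}]=c_1(N_{E_i})\cdot[\P^{N-m}]=-[\P^{N-m-1}]$. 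Comparing coefficients in $A_{N-m-1}(E_i)\cong\Z$ yields $\operatorname{mult}_{p_i}Z=b_i$ with no geometric genericity claims at all. Everything else in your proposal can stand as written.
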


\section{Group actions on $\grass(k,n)$}\label{sec:group_actions} There is a natural action of the general linear group $\gl_n$ on $\grass(k,n)$. Fix an ordered basis $f_1,\dots,f_n$ for a vector space $V$ and let $B \subset \gl(V)$ be the group of upper-triangular matrices. It is well-known that $B$ is connected and solvable. The following lemma (see, e.g., \cite[Lemma 2.1]{anderson}) shows that this allows us to compute the pseudoeffective cones (c.f. \cite{fmss}).

\begin{lemma}\label{lemma:anderson}
Let $X$ be a complete irreducible variety of dimension $n$, and $B$ a connected solvable group acting on $X$. Then for any $0 \leq m \leq n$, the cone of effective classes in $N_m(X)$ is generated by the classes of $B$-invariant $m$-cycles.
\end{lemma}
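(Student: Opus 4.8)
The plan is to reduce the statement to the two one-dimensional building blocks of a connected solvable group and to show that, along each such block, an effective cycle can be degenerated into an invariant one without changing its numerical class. Throughout I would use that, since $B$ is connected, $[g\cdot Z] = [Z]$ in $N_m(X)$ for every $g \in B$ and every cycle $Z$ (translation by a connected group is algebraically, hence numerically, trivial), and that the irreducible components of a $B$-invariant cycle are individually $B$-invariant (a connected group cannot permute them). It therefore suffices to prove that every subvariety $Z \subseteq X$ of dimension $m$ is numerically equivalent to an effective $B$-invariant cycle: writing that representative as a non-negative combination of its invariant components exhibits $[Z]$ in the cone generated by $B$-invariant cycles, and since every effective class is a non-negative combination of such $[Z]$, the lemma follows.

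The heart of the argument is the case of a one-parameter group $\Lambda$, with $\Lambda \cong \mathbb{G}_m$ or $\Lambda \cong \mathbb{G}_a$, acting on the complete variety $X$. Given a subvariety $Z$, I would form the family whose fiber over $t \in \Lambda$ is the translate $t\cdot Z$, take its closure inside $\overline{\Lambda}\times X$ (with $\overline{\mathbb{G}_m} = \mathbb{A}^1$ or $\overline{\mathbb{G}_a} = \mathbb{P}^1$), and pass to the flat limit $Z_0$ over the added boundary point. Because $X$ is complete this family is proper, so the limit exists and is effective, and since all interior fibers are translates of $Z$ they share a single numerical class, whence $[Z_0] = [Z]$. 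The limit is $\Lambda$-invariant because the total family is equivariant for the $\Lambda$-action $s\cdot(t,x) = (s t,\, s\cdot x)$ on $\overline{\Lambda}\times X$ (for $\mathbb{G}_a$, the additive analogue), which preserves the boundary fiber and restricts there to the original action on $X$. This produces from any effective cycle an effective $\Lambda$-invariant cycle in the same numerical class.

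To pass from $\Lambda$ to all of $B$, I would induct on $\dim B$ using the structure theory of connected solvable groups: there is a chain $\{e\} = B_N \triangleleft \cdots \triangleleft B_0 = B$ of closed connected subgroups, each normal in $B$, with successive quotients $B_i/B_{i+1}$ isomorphic to $\mathbb{G}_m$ or $\mathbb{G}_a$, together with a one-parameter subgroup $\gamma_i$ of $B_i$ surjecting onto $B_i/B_{i+1}$. Starting from $Z$, which is trivially $B_N$-invariant, I would build for decreasing $i$ an effective cycle $Z_i$ numerically equivalent to $Z$ and invariant under $B_i$: given a $B_{i+1}$-invariant $Z_{i+1}$, degenerate it along $\gamma_i$ by the previous paragraph to get a $\gamma_i$-invariant effective $Z_i$ with $[Z_i] = [Z]$. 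Crucially, $B_{i+1}$-invariance survives: for $b \in B_{i+1}$ one has $b\,\gamma_i(t) = \gamma_i(t)\,\bigl(\gamma_i(t)^{-1} b\,\gamma_i(t)\bigr)$ with $\gamma_i(t)^{-1} b\,\gamma_i(t) \in B_{i+1}$ by normality, so each fiber $\gamma_i(t)\cdot Z_{i+1}$ is again $B_{i+1}$-invariant and hence so is the flat limit. Since $\gamma_i$ together with $B_{i+1}$ generates $B_i$, the cycle $Z_i$ is $B_i$-invariant, and at $i = 0$ this yields a $B$-invariant effective representative of $[Z]$.

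The main obstacle is making the degeneration of the second paragraph precise: one must realize the orbit family as an honest flat family over the completed one-parameter group, verify that the special fiber is effective with the correct multiplicities so that $[Z_0] = [Z]$ exactly (and not merely a numerically distinct limit), and confirm invariance of $Z_0$ as a cycle rather than just as a set, for which the equivariance of the total family is the clean device. The second delicate point, handled above by the conjugation identity, is that invariance accumulated at earlier stages must not be destroyed at later ones; this is exactly where the normality of each $B_i$ in the whole group $B$ is used, and it is why the composition series should consist of subgroups normal in $B$ rather than merely in the next term. I would cite the structure theory of connected solvable linear algebraic groups for the existence of this chain and of the lifts $\gamma_i$, and otherwise the argument is self-contained.
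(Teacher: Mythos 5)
The paper does not actually prove this lemma: it is quoted from \cite{anderson} (c.f.\ \cite{fmss}), and the argument in that literature, going back to Hirschowitz, is precisely the degeneration you describe. Your reconstruction is correct in characteristic zero: translates by a connected group are algebraically, hence numerically, trivial; the flat limit over the completed one-parameter group is effective, numerically equivalent to the original cycle, and invariant because the total family is equivariant; and the conjugation identity does preserve the invariance already gained. One small correction: that step only uses $\gamma_i(t)\in B_i$, so you only need $B_{i+1}$ normal in $B_i$, not in all of $B$ (though the stronger chain does exist, so this costs nothing).

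The one genuine issue is that your lifts $\gamma_i$ need not exist in positive characteristic, and the lemma as stated (and as used from \cite{anderson}) carries no characteristic hypothesis. Concretely, let $B=W_2$ be the group of length-two Witt vectors over an algebraically closed field of characteristic $p$, a connected commutative unipotent group, and let $B'=V\cong\mathbb{G}_a$ be the kernel of the projection $W_2\to\mathbb{G}_a$. Multiplication by $p$ on $W_2$ is $(a,b)\mapsto(0,a^p)$, so any homomorphism $\mathbb{G}_a\to W_2$ (whose image is reduced and killed by $p$) lands inside $V$; hence no one-parameter subgroup of $W_2$ surjects onto $W_2/V$, and your inductive step cannot start. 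The characteristic-free repair is to dispense with the section altogether: since $Z_{i+1}$ is $B_{i+1}$-invariant and $B_{i+1}\triangleleft B_i$, the family $\bigl\{(b,x): x\in b\cdot Z_{i+1}\bigr\}\subset B_i\times X$ is invariant under right translation by $B_{i+1}$ and so descends to a family over the quotient $\Lambda=B_i/B_{i+1}\cong\mathbb{G}_m$ or $\mathbb{G}_a$ itself; one then takes the flat limit over the boundary point of $\overline{\Lambda}$, and equivariance of this descended family (with $B_i$ acting on $\overline{\Lambda}$ through $\Lambda$, fixing the boundary point) makes the limit invariant under all of $B_i$ at once. In characteristic zero, and for the specific groups used in this paper (which are generated by a torus and root subgroups, so the $\gamma_i$ exist), your argument goes through exactly as written.
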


Let $F_\bullet$ be the complete flag $F_0 \subset F_1 \subset \cdots \subset F_n = V$ where $F_i = \langle f_1, \dots, f_i \rangle$. By \cite[\S 9.4]{fulton}, the $B$-invariant $m$-orbits in $\grass(k,n)$ are precisely the Schubert cells $\Sigma^\circ_\lambda(F_\bullet)$ with $|\lambda| = k(n-k)-m$. This recovers the fact that the classes of the Schubert varieties generate $\ceff_m(\grass(k,n))$. Further, the point $F_k = \langle f_1, \dots, f_k \rangle$ is fixed by $B$, thus we can extend the $B$ action to the blow-up of $\grass(k,n)$ at $F_k$.

\begin{cor}\label{cor:lin_gen1pt}
Let $X_1$ be the blow-up of $\grass(k,n)$ at a point. Then $\ceff_m(X_1)$ is S-generated for $0 \leq m \leq k(n-k)$.

\end{cor}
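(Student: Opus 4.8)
The plan is to leverage Lemma~\ref{lemma:anderson} directly, now applied to the blow-up $X_1$ rather than to $\grass(k,n)$ itself. The key observation is that the fixed point $F_k = \langle f_1, \dots, f_k\rangle$ of the Borel action $B$ is exactly the point being blown up, so the $B$-action lifts to $X_1$: because blowing up is functorial for the action of a group fixing the center, $B$ acts on $X_1$ with $\pi$ equivariant. Since $B$ is still connected and solvable, Lemma~\ref{lemma:anderson} tells us that $\ceff_m(X_1)$ is generated by the classes of $B$-invariant $m$-cycles on $X_1$. The entire proof then reduces to identifying these invariant cycles and checking that each one is already among the generators that witness S-generation, namely strict transforms of Schubert varieties (possibly through $F_k$) and linear cycles $E_{1,[m]}$ in the exceptional divisor $E_1 \cong \P^{k(n-k)-1}$.

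The main step is therefore a classification of the closures of $B$-orbits on $X_1$, which split into two types according to whether they meet $E_1$. First I would argue that a $B$-invariant subvariety $V \subset X_1$ not contained in $E_1$ is the strict transform of its image $\pi(V)$, which is a $B$-invariant subvariety of $\grass(k,n)$ and hence a union of Schubert cells; being irreducible it is the closure of a single cell, i.e. a Schubert variety, and so $[V]$ is (up to the exceptional correction of Lemma~\ref{lemma:positive_coeffs}(2)) a strict transform of a Schubert class. Second, for $B$-invariant subvarieties $V \subset E_1$ I would use that the $B$-action on $X_1$ induces a linear action on the exceptional $\P^{k(n-k)-1} = \P(T_{F_k}\grass(k,n))$ through the action of the stabilizer of $F_k$ on the tangent space. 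The invariant subvarieties of $E_1$ therefore correspond to $B$-invariant subvarieties of a projective space under a linear action; these are generated in homology by linear subspaces $E_{1,[m]}$, since the only classes in $\ceff_m(\P^N)$ are positive multiples of the class of a linear $\P^m$, and every effective cycle in $E_1$ has class a positive multiple of $E_{1,[m]}$ by Lemma~\ref{lemma:positive_coeffs}(1).

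I expect the genuine obstacle to lie in the second case: one must verify that the induced action on $E_1$ is again by a connected solvable group so that Lemma~\ref{lemma:anderson} applies there as well, or else argue more directly. The cleanest route is to avoid a separate appeal to the lemma on $E_1$ and instead invoke Lemma~\ref{lemma:positive_coeffs}(1), which already pins down the class of \emph{any} subvariety contained in $E_i$ as a positive multiple of $E_{1,[m]}$, regardless of invariance. Combining the two cases, every $B$-invariant $m$-cycle is either a strict transform of a Schubert variety or supported in $E_1$ with class a multiple of $E_{1,[m]}$; by Lemma~\ref{lemma:anderson} these span $\ceff_m(X_1)$, which is precisely the statement that $\ceff_m(X_1)$ is S-generated. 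A minor bookkeeping point is the boundary value $m = k(n-k)$: here the only cycles are multiples of the fundamental class, which is the strict transform of $\grass(k,n)$ itself, so the claim holds trivially and the stated range $0 \le m \le k(n-k)$ is justified.
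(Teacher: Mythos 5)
Your proposal is correct and follows essentially the same route as the paper: lift the $B$-action to $X_1$ at the fixed point, apply Lemma~\ref{lemma:anderson}, and classify the $B$-invariant cycles as strict transforms of Schubert varieties or cycles in the exceptional divisor. The only difference is that you spell out details the paper leaves implicit (the appeal to Lemma~\ref{lemma:positive_coeffs}(1) for cycles inside $E_1$ and the trivial case $m = k(n-k)$), which is a faithful elaboration rather than a new argument.
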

\begin{proof}
Let $p \in \grass(k,n)$ be any point. Since $\grass(k,n)$ is homogeneous, we may choose a basis for $V$ so that the group $B$ of upper-triangular matrices fixes $p$. If $\pi:X_1 \to\grass(k,n)$ is the blow-up of $\grass(k,n)$ at $p$, then $B E = E$, where $E$ is the exceptional divisor. Let $Y\subset X_1$ be a $B$-invariant $m$-cycle. Since $\pi$ is $B$-equivariant, $\pi(Y)$ is $B$-invariant too. It follows that $\pi(Y)$ is either the point $p$ or a Schubert cell and $Y$ its strict transform. Thus $\ceff_m(X_1)$ is S-generated.
\end{proof}

Using Theorem \ref{thm:r-z} we can now explicitly describe the effective cones of $X_1$: the cone $\ceff^m(X_1)$ is spanned by $E^{[m]}$ and classes of the form $\sigma_\lambda - d_\lambda E^{[m]}$ where $|\lambda| = m$ and $d_\lambda$ is the multiplicity of the partition $\lambda$ along $\mu = (n-k,n-k, \dots, n-k)$.

A similar method works for the blow-up of $\grass(k,n)$ at two general points. The idea is to replace the group $B$ of upper-triangular matrices with a suitable alternative to ensure that both points are fixed by the action. First we will study what happens when $n=2k$, then reduce the general case to that one.

Let $V$ be a $2k$-dimensional vector space. We begin with two flags $F_\bullet$ and $G_\bullet$ of length $k$, defined by $F_i= \langle f_1, \dots, f_i\rangle$ and $G_j = \langle g_1, \dots, g_j \rangle$. We assume the set $\{f_1, \dots, f_k, g_1, \dots, g_k\}$ is a basis for $V$ so that $F_k$ and $G_k$ are in general position. We then have an action on $\grass(k,2k)$ by the group
\[
B=\left\{\begin{pmatrix}
B_1 & 0\\
0 & B_2
\end{pmatrix}\right\} \subset \gl(V),
\]
where $B_1$ and $B_2$ are the groups of $k\times k$ upper-triangular matrices. By construction, $B$ fixes $F_k$ and $G_k$. Our main goal is to compute the $B$-orbits of $\grass(k,2k)$ so that we can apply Lemma \ref{lemma:anderson}.

\begin{definition}
Let $W \subset V$ be any linear subspace. Define the $(k+1)\times(k+1)$ \emph{incidence matrix} $I_W$ by
\[
	I_W = \left( \dim W \cap (F_i+G_j)\right)_{0 \leq i,j \leq k}.
\]
That is, the $(i,j)$-th entry of $I_W$ is the dimension of the intersection $W \cap (F_i + G_j)$, where $0 \leq i,j \leq k$ (note the unusual indexing).
\end{definition}

The set of $W'$ satisfying $I_{W'} = I_W$ is clearly an intersection (not necessarily transverse) of Schubert cycles defined with respect to flags $H_\bullet$ of the form $H_\ell = F_{i_\ell} + G_{j_\ell}$. The main fact is that these are precisely the $B$-orbits of $\grass(k,2k)$. To prove this, we will use the following lemma which produces a computationally useful space $W'$ satisfying $I_{W'}=I_W$.

\begin{lemma}\label{lemma:incidence_matrices}
If $W \in \grass(k,2k)$ then there is some $W'$ with basis $w_1, \dots, w_k$ such that:
\begin{enumerate}
	\item Each $f_i$ and $g_j$ appears at most once in the expression of the $w_\ell$,
	\item $I_{W'} = I_W$.
\end{enumerate}
\end{lemma}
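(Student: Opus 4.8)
The plan is to reformulate condition (1) in coordinates and then read $W'$ off directly from the incidence matrix. Writing each vector of $V$ in the basis $f_1, \dots, f_k, g_1, \dots, g_k$ and recording a candidate basis $w_1, \dots, w_k$ as the rows of a $k \times 2k$ matrix, condition (1) says precisely that each of the $2k$ columns has at most one nonzero entry, i.e. the $w_\ell$ have pairwise disjoint supports. Since $F_i + G_j = \langle f_1, \dots, f_i, g_1, \dots, g_j\rangle$ is a coordinate subspace, a direct computation shows that for a disjoint-support space $W'$ whose basis vectors $w_\ell$ have support $S_\ell$, one has $\dim W' \cap (F_i + G_j) = \#\{\ell : S_\ell \subseteq \{f_1, \dots, f_i\} \cup \{g_1, \dots, g_j\}\}$. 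Setting $p_\ell = \max\{a : f_a \in S_\ell\}$ and $q_\ell = \max\{b : g_b \in S_\ell\}$ (and $0$ if the corresponding block is unused), this reads $\dim W' \cap (F_i+G_j) = \#\{\ell : p_\ell \le i,\ q_\ell \le j\}$. Thus the incidence matrix of any disjoint-support space is the cumulative sum over the grid of the multiset of ``corners'' $(p_\ell, q_\ell)$.

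This suggests reversing the process: given $W$ with incidence matrix $d_{ij} = \dim W \cap (F_i + G_j)$, I recover the corners by M\"obius inversion, setting
\[
c_{ij} = d_{ij} - d_{i-1,j} - d_{i,j-1} + d_{i-1,j-1}
\]
(with $d_{ij} = 0$ whenever $i<0$ or $j<0$), placing $c_{ij}$ corners at $(i,j)$, and defining $W'$ as the span of the vectors $f_i + g_j$ (with $f_0 = g_0 = 0$) over all grid points with $c_{ij} = 1$. For this to work I will need three facts about $(c_{ij})$: that $c_{ij} \ge 0$, that $\sum_{i,j} c_{ij} = d_{kk} = k$, and that each row sum $\sum_j c_{ij} = d_{ik} - d_{i-1,k}$ and each column sum $\sum_i c_{ij} = d_{kj} - d_{k,j-1}$ is at most $1$. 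The latter two follow by telescoping together with the elementary observation that $F_{i-1}+G_j$ is a hyperplane in $F_i+G_j$ (using $\dim(F_i+G_j) = i+j$, which holds because $F_k \cap G_k = 0$), so $d$ increases by at most one in each direction. The bound on row and column sums guarantees that the chosen corners have distinct $f$-indices and distinct $g$-indices, so the vectors $f_i + g_j$ have pairwise disjoint supports; since no corner sits at the origin ($c_{00} = d_{00} = 0$) they are nonzero, hence form a basis of a $k$-dimensional $W'$ satisfying (1), and a final telescoping check gives $\dim W' \cap (F_i+G_j) = \sum_{p \le i,\, q \le j} c_{pq} = d_{ij}$, i.e. $I_{W'} = I_W$.

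The main obstacle is the positivity $c_{ij} \ge 0$, equivalently the supermodularity $d_{ij} + d_{i-1,j-1} \ge d_{i-1,j} + d_{i,j-1}$ of the incidence function. I plan to deduce this from the modular (dimension) law for subspaces. Writing $U_{00} = F_{i-1}+G_{j-1}$, $U_{10} = F_i + G_{j-1}$, $U_{01} = F_{i-1}+G_j$, $U_{11} = F_i + G_j$, the independence of the two flags ($F_k \cap G_k = 0$) forces $U_{10} \cap U_{01} = U_{00}$ and $U_{10} + U_{01} = U_{11}$. Intersecting with $W$ and applying $\dim(A+B) = \dim A + \dim B - \dim(A \cap B)$ to $A = W \cap U_{10}$ and $B = W \cap U_{01}$, whose sum lies in $W \cap U_{11}$ and whose intersection equals $W \cap U_{00}$, yields exactly the desired inequality. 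Checking that $U_{10}\cap U_{01} = U_{00}$ is the one place where general position of the two points enters, and it is the step I expect to require the most care.
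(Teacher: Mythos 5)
Your proof is correct, and it reaches the lemma by a genuinely different route than the paper. The paper's construction is greedy: it sets $w_\ell = f_i + g_j$ where $(i,j)$ is the lexicographically first nonzero entry of the residual matrix $I_W - I_{\langle w_1,\dots,w_{\ell-1}\rangle}$, proves property (1) by a contradiction argument on entries of the residual, and proves $I_{W'}=I_W$ by tracking nonnegativity and the $(k,k)$ entry through the induction. You instead produce all the corners at once as the second mixed difference (M\"obius inverse) $c_{ij}$ of the incidence matrix, reducing the lemma to three clean facts: nonnegativity of $c_{ij}$, which you derive from the modular law applied to $W\cap(F_i+G_{j-1})$ and $W\cap(F_{i-1}+G_j)$ after checking $(F_i+G_{j-1})\cap(F_{i-1}+G_j)=F_{i-1}+G_{j-1}$ (indeed the one place where independence of the two flags enters); single-step differences at most one, from the hyperplane observation; and the telescoping identities. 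The two arguments are two sides of the same decomposition --- the paper's $I_{\langle f_i+g_j\rangle}$ is the indicator matrix of the rectangle with corners $(i,j)$ and $(k,k)$, and your cumulative-sum identity says exactly that $I_W$ is a sum of such rectangles --- but your version isolates the geometric input (the dimension formula) from the combinatorial bookkeeping, and it exhibits the corner set as a canonical invariant computed directly from $I_W$, which makes the uniqueness of the model space $W'$ evident and meshes well with the orbit description in Lemma \ref{lemma:g(k,2k)_orbits}. What the paper's greedy induction buys is that it never needs the supermodularity inequality stated explicitly; what yours buys is that the key positivity is a one-line application of $\dim(A+B)=\dim A+\dim B-\dim(A\cap B)$ rather than being hidden inside a contradiction argument.

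One small repair: your claim that \emph{every} row and column sum of $(c_{ij})$ is at most $1$ fails for the $0$-th row and the $0$-th column, where there is no hyperplane step; for instance if $W=G_k$ then $\sum_j c_{0j} = d_{0k} = k$. This does not hurt the proof: property (1) only constrains the genuine basis vectors $f_i$, $g_j$ with $i,j\geq 1$ (since $f_0=g_0=0$), and for those your row and column bounds, which are valid precisely for rows and columns of positive index, are exactly what is needed; a corner in row $0$ contributes a pure vector $g_j$, two such corners occupy distinct columns, and no $g_j$ with $j\geq1$ is reused by your column bound. Entrywise $c_{ij}\leq 1$ also still holds everywhere --- on the boundary by the single-step bound, in the interior from the row bound plus nonnegativity --- so the $k$ chosen vectors are nonzero with pairwise disjoint supports, and the construction and the final telescoping verification go through verbatim.
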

\begin{proof}
We construct $W' \in \grass(k,2k)$ as follows. Order the entries of $I_W$ with the lexicographic order on the indices, and let $w_1 = f_i + g_j$ where $(i,j)$ is the first nonzero entry. Inductively, let $w_\ell = f_i + g_j$ where $(i,j)$ is the first nonzero entry of $I_W-I_{\langle w_1, \dots, w_{\ell-1} \rangle}$.

Suppose for a contradiction that $w_\ell = f_i + g_j$ and $w_m = f_i + g_{j'}$, with $m \neq \ell$. Without loss of generality, assume $\ell < m$. Let $a$ be the $(i,j')$ entry of $I=I_W-I_{\langle w_1, \dots, w_{\ell-1}\rangle}$, $b$ the $(i,j'-1)$ entry and $c$ the $(i-1,j')$ entry, so that $I$ has the following form:
\[
I=\begin{pmatrix}
0 & 0 & \cdots & 0 & \ast & \cdots & \ast & \ast & \ast & \cdots\\
\vdots & \vdots &  & \vdots& \vdots& & \vdots& \vdots& \vdots& \\
0 & 0 & \cdots & 0 & \ast & \cdots & \ast & c & \ast & \cdots\\
0 & 0 & \cdots & 1 & \ast & \cdots & b & a & \ast & \cdots \\
0 & 0 & \cdots & \ast & \ast & \cdots & \ast & \ast & \ast & \cdots\\
\vdots & \vdots &  & \vdots& \vdots& & \vdots& \vdots& \vdots&
\end{pmatrix}
\]
We must have $b,c\leq a$. Further, $I_{\langle f_i + g_j\rangle}$ is the matrix whose nonzero entries are precisely the rectangle of 1's with corners $(i,j)$ and $(k,k)$. Since there exists $w \in W$ with $I_{\langle w \rangle} = I_{\langle f_i + g_j\rangle}$, $I$ is an incidence matrix corresponding to some subspace of $W$. Thus $a-1 \leq b,c$.

Now if $v \in V$ is any vector such that the $(i,j'-1)$ entry of $I-I_{\langle v \rangle}$ is strictly less than $b$, then the $(i,j')$ entry of $I-I_{\langle v \rangle}$ is strictly less than $a$. The same must hold for $c$ and $a$. By assumption, there is some $W' \subset V$ such that $(i,j')$ is the first nonzero entry of $I-I_{W'}$. It follows that $b=c=a-1$, and furthermore that every $w_r$ with $r < m$ must correspond to an entry in $I_W$ strictly above and to the left of $(i,j')$. This contradicts $w_\ell = f_i + g_j$. An analogous argument shows that the $g_j$ cannot occur more than once. This gives (1).

To prove (2), it suffices to show that $I_W-I_{W'} = 0$. We first show by induction that $I_W - I_{\langle w_1, \dots, w_\ell\rangle}$ has nonnegative entries for  $1\leq \ell \leq k$. Indeed, $I_W - I_{\langle w_1, \dots, w_\ell\rangle} = I_W - I_{\langle w_1, \dots, w_{\ell-1}\rangle} - I_{\langle w_\ell \rangle}$. Since $I_W-I_{\langle w_1, \dots, w_{\ell-1}\rangle}$ is the incidence matrix corresponding to a codimension-$(\ell-1)$ subspace of $W$, its rows are nondecreasing from left to right and its columns are nondecreasing from top to bottom. The incidence matrix $I_{\langle w_\ell \rangle}$ consists of all zeros except for a rectangle of 1's starting at the first nonzero entry of $I_W-I_{\langle w_1, \dots, w_{\ell-1}\rangle}$. It follows that $I_W-I_{\langle w_1, \dots, w_{\ell-1}, w_\ell\rangle}$ has nonnegative entries. Furthermore, for each $\ell$, the elements of $I_W-I_{\langle w_1, \dots, w_{\ell}\rangle}$ are bounded above by the $(k,k)$ entry, which is necessarily $k-\ell$. Since the $(k,k)$ entry of $I_W-I_{W'}$ is zero, claim (2) follows.
\end{proof}

\begin{lemma}\label{lemma:g(k,2k)_orbits}
Let $B$ be the set of block-diagonal invertible upper-triangular matrices. If $W \in \grass(k,2k)$ then the $B$-orbit of $W$ is the set of all $W' \in \grass(k,2k)$ such that $I_{W'} = I_W$.
\end{lemma}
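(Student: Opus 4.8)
The plan is to prove the two inclusions separately. The easy one is that the $B$-orbit of $W$ is contained in the incidence level set: since each block $B_1,B_2$ is upper triangular, any $b=\mathrm{diag}(B_1,B_2)\in B$ satisfies $B_1F_i=F_i$ and $B_2G_j=G_j$, hence $b(F_i+G_j)=F_i+G_j$ for all $i,j$. Therefore $\dim (bW)\cap(F_i+G_j)=\dim W\cap(F_i+G_j)$, so $I_{bW}=I_W$ and the orbit is contained in $\{W':I_{W'}=I_W\}$. For the reverse inclusion it suffices to show that every $W$ lies in the $B$-orbit of the canonical subspace $\widetilde W$ produced by Lemma~\ref{lemma:incidence_matrices}, because $\widetilde W$ is constructed purely from the data of $I_W$; then $I_W=I_{W'}$ forces the two canonical subspaces to be literally equal, so $W$ and $W'$ share an orbit. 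Write $\widetilde W=\langle w_1,\dots,w_k\rangle$ with $w_\ell=f_{i_\ell}+g_{j_\ell}$ (convention $f_0=g_0=0$); by Lemma~\ref{lemma:incidence_matrices}(1) the nonzero $i_\ell$ are pairwise distinct, and likewise the nonzero $j_\ell$.

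The heart of the argument is to build a basis $v_1,\dots,v_k$ of $W$ itself whose leading data matches the pairs $(i_\ell,j_\ell)$, by running the greedy process of Lemma~\ref{lemma:incidence_matrices} inside $W$. Having chosen $v_1,\dots,v_{\ell-1}$ spanning $W_{\ell-1}$, I would take $(i_\ell,j_\ell)$ to be the first nonzero entry of $I_W-I_{W_{\ell-1}}$ and pick $v_\ell\in W\cap(F_{i_\ell}+G_{j_\ell})$ with $v_\ell\notin W_{\ell-1}$; such a vector exists since the $(k,k)$ entry of $I_W-I_{W_{\ell-1}}$ equals $k-(\ell-1)>0$. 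Minimality of $(i_\ell,j_\ell)$ pins down the leading indices: if the $f$-component of $v_\ell$ lay in $F_{i_\ell-1}$, then $v_\ell\in W\cap(F_{i_\ell-1}+G_{j_\ell})$, and as this entry is lexicographically earlier it agrees with $\dim W_{\ell-1}\cap(F_{i_\ell-1}+G_{j_\ell})$, forcing $v_\ell\in W_{\ell-1}$, a contradiction; the same reasoning controls the $g$-component. Thus the $f$- and $g$-components $\phi_\ell,\gamma_\ell$ of $v_\ell=\phi_\ell+\gamma_\ell$ have leading indices exactly $i_\ell$ and $j_\ell$.

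Once the leading data is in place, $B$ is used to straighten the basis. Since the nonzero $\phi_\ell$ have distinct leading indices, the set $\{\phi_\ell:\phi_\ell\neq 0\}\cup\{f_i:i\notin\{i_\ell\}\}$ is a basis of $\langle f_1,\dots,f_k\rangle$ adapted to the flag $F_\bullet$, so there is an invertible upper-triangular $B_1$ with $B_1\phi_\ell=f_{i_\ell}$ for every $\ell$ (upper-triangularity follows because $\phi_\ell\in F_{i_\ell}$). Constructing $B_2$ analogously on the $g$-side, the block matrix $b=\mathrm{diag}(B_1,B_2)\in B$ gives $b\,v_\ell=f_{i_\ell}+g_{j_\ell}=w_\ell$, hence $bW=\widetilde W$, which finishes the reverse inclusion.

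The hard part is the bookkeeping that links the greedy choices inside $W$ to the canonical construction. One must verify inductively that $\dim W_{\ell-1}\cap(F_i+G_j)=\#\{m<\ell: i_m\le i,\ j_m\le j\}$, and the same formula for $\langle w_1,\dots,w_{\ell-1}\rangle$, so that $I_{W_{\ell-1}}=I_{\langle w_1,\dots,w_{\ell-1}\rangle}$ and the pairs truly coincide at every step. This is exactly where the distinctness of the leading indices supplied by Lemma~\ref{lemma:incidence_matrices}(1) is indispensable: it guarantees that a combination $\sum a_m\phi_m$ (resp.\ $\sum a_m\gamma_m$) suffers no leading-term cancellation, so that $\langle v_m: m<\ell\rangle\cap(F_i+G_j)$ is spanned exactly by those $v_m$ with $i_m\le i$ and $j_m\le j$. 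After this is established, the passage to $b\in B$ is routine linear algebra.
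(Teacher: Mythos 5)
Your proof is correct and follows essentially the same route as the paper: both reduce to the canonical representative of Lemma \ref{lemma:incidence_matrices} and exhibit an explicit block-upper-triangular element of $B$ carrying one adapted basis to the other; you send $W$ onto the canonical subspace, while the paper sends the canonical subspace onto an arbitrary $T$ in the level set, which is the same argument run in the inverse direction. Your greedy construction of the adapted basis $v_1,\dots,v_k$ of $W$ (with the inductive matching of incidence matrices) is precisely the justification of the step the paper asserts without detail, namely that any $T$ with $I_T=I_W$ admits a basis $t_\ell$ with leading indices exactly $(i_\ell,j_\ell)$.
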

\begin{proof}
Let $W'$ be as in Lemma \ref{lemma:incidence_matrices}. The general element of $B$ is of the form
\[
C=\begin{pmatrix}
x_1 & x_2 & x_3 & \cdots & x_k					&&&&&\\
0 & x_{k+1} & x_{k+2} & \cdots & x_{2k-1}		&&&&&\\
0 & 0 & x_{2k} & \cdots  &x_{3k-3}				&&&\makebox(0,0){\text{\huge0}}&&\\
\vdots & \vdots & \vdots & \ddots & \vdots		&&&&&\\
0 & 0 & 0 & \cdots & x_N						&&&&&\\
& & & &											&y_1 & y_2 & y_3 & \cdots & y_k\\
& & & &											& 0 & y_{k+1} & y_{k+2} & \cdots  &y_{2k-1}\\
& & \makebox(0,0){\text{\huge0}}& &											&0 & 0 & y_{2k} & \cdots  &y_{3k-3}	\\
& & & &											&\vdots & \vdots & \vdots & \ddots & \vdots	\\
& & & &											&0 & 0 & 0 & \cdots & y_N
\end{pmatrix},
\]
where $N = \frac{k(k+1)}{2}$ and $\prod x_iy_i \neq 0$. To simplify notation, let $x_{i,j}$ denote the variable in the $i$th row and $j$th column of $C$ for $1 \leq i,j \leq k$, and let $y_{i,j}$ be the $(k+i, k+j)$ entry. We show that for any $T \in \grass(k,2k)$ with $I_T = I_W$, we can choose the $x$ and $y$ variables so that $CW' = T$. Let $w_1, \dots, w_k$ be the basis for $W'$ constructed in Lemma \ref{lemma:incidence_matrices} and write $w_\ell = f_{i_\ell} + g_{j_\ell}$. Since $T$ has the same incidence dimensions as $W'$, $T$ necessarily admits a basis of the form
\[
t_\ell = \sum_{p=1}^{i_\ell} a_p f_p + \sum_{q=1}^{j_\ell} b_q g_q,
\]
with $a_{i_\ell}$ and $b_{j_\ell}$ nonzero. We can therefore set $x_{p,i_\ell}=a_p$ and $y_{q,j_\ell} = b_q$ so that $CW' = T$.
\end{proof}

\begin{cor}\label{cor:incidence_orbits_lowdimension}
If $W \subset V$ is any subspace of dimension $k' \leq k$, then the $B$-orbit of $W$ is the set of all $W' \in \grass(k',2k)$ such that $I_W = I_{W'}$. 
\end{cor}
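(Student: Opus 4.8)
The plan is to observe that the two lemmas just proved, Lemma~\ref{lemma:incidence_matrices} and Lemma~\ref{lemma:g(k,2k)_orbits}, never genuinely used that $\dim W = k$: the only role of $k$ there was to count the number of basis vectors produced and to fix the value of the $(k,k)$ entry of the incidence matrix. I would therefore reprove both statements verbatim with $k$ replaced by $k'$ throughout. First, the easy inclusion: since $B$ is block upper-triangular it preserves each $F_i$ and each $G_j$, hence preserves every sum $F_i + G_j$; consequently $\dim (CW) \cap (F_i + G_j) = \dim W \cap (F_i + G_j)$ for every $C \in B$, so $I_{CW} = I_W$, and the $B$-orbit of $W$ is contained in $\{W' : I_{W'} = I_W\}$. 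The content is the reverse inclusion.

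For the reverse inclusion I would first rerun the construction of Lemma~\ref{lemma:incidence_matrices} for a $k'$-dimensional $W$. The incidence matrix $I_W$ is still $(k+1)\times(k+1)$, but now its $(k,k)$ entry equals $\dim W \cap V = k'$. Greedily selecting pivots $w_\ell = f_{i_\ell} + g_{j_\ell}$ at the first nonzero entry of $I_W - I_{\langle w_1, \dots, w_{\ell-1}\rangle}$, the $(k,k)$ entry of this difference decreases by exactly one at each step, so the process terminates after precisely $k'$ steps with $I_W - I_{\langle w_1, \dots, w_{k'}\rangle} = 0$, yielding a basis $w_1, \dots, w_{k'}$ of a space $W'$ with $I_{W'} = I_W$. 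The distinctness argument for the $f_i$ and $g_j$ (part (1)) and the nonnegativity/boundedness argument (part (2)) are both local to the entries of the incidence matrix and make no reference to the ambient dimension, so they go through unchanged.

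Finally I would rerun Lemma~\ref{lemma:g(k,2k)_orbits}: given any $T \in \grass(k',2k)$ with $I_T = I_W$, the equality of incidence matrices forces $T$ to acquire its dimension at exactly the pivot positions $(i_\ell, j_\ell)$, so $T$ admits a basis $t_\ell = \sum_{p \le i_\ell} a_p f_p + \sum_{q \le j_\ell} b_q g_q$ with $a_{i_\ell}, b_{j_\ell} \neq 0$. Setting $x_{p, i_\ell} = a_p$ and $y_{q, j_\ell} = b_q$ produces a block upper-triangular $C$ with $CW' = T$, and I expect this matching step — confirming that $T$ really does admit a pivot-adapted basis indexed by the same positions $(i_\ell, j_\ell)$ — to be the heart of the argument, exactly as in the $k'=k$ case. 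The one new bookkeeping point, peculiar to $k' < k$, is that only $k'$ of the $k$ available columns in each block are constrained by these assignments; I would set the diagonal entries in the remaining columns to $1$ (and the corresponding strictly-upper entries to $0$) so that each block has nonzero diagonal and $C$ is genuinely invertible, i.e.\ $C \in B$. With this caveat the proof is identical to that of Lemma~\ref{lemma:g(k,2k)_orbits}, giving $\{W' : I_{W'} = I_W\} \subseteq B \cdot W$ and hence equality.
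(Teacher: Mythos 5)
Your proposal is correct and matches the paper's (implicit) reasoning: the paper states this as a corollary with no separate proof, precisely because the arguments of Lemma~\ref{lemma:incidence_matrices} and Lemma~\ref{lemma:g(k,2k)_orbits} never use $\dim W = k$ except for bookkeeping, which is exactly what you verify. Your one added observation---filling the unconstrained columns of $C$ with identity entries to ensure invertibility---is a sound (and in fact already needed in the $k'=k$ case, where pivots with $i_\ell=0$ or $j_\ell=0$ leave columns unconstrained) piece of care that the paper glosses over.
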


We have thus computed the $B$-orbits of $\grass(k,2k)$. We now extend to the general case $n \geq 2k$ by modifying $B$ so that the additional columns have many degrees of freedom.

\begin{lemma}\label{lemma:2k+s}
Let $\grass(k,n)=\grass(k,2k+s)$ with $s>0$. Let $B$ be the set of invertible matrices of the form
\[
\sbox0{$\begin{matrix}
\ast&\ast &\cdots&\ast\\
0 & \ast & \cdots & \ast\\
\vdots & \vdots & \ddots & \vdots\\
0 & 0 & \cdots & \ast
\end{matrix}$}
\sbox1{$\begin{matrix}
\ast&\ast &\cdots&\ast\\
\vdots & \vdots & \vdots & \vdots\\
\ast & \ast & \cdots & \ast
\end{matrix}$}
C=\left(
\begin{array}{ccc}
\vphantom{\usebox{0}}\makebox[\wd0]{\huge $B_1$} & \makebox[\wd0]{\huge $0$} &\usebox{1}\\
\makebox[\wd0]{\huge $0$} & \makebox[\wd0]{\huge $B_2$} & \usebox{1}\\
\makebox[\wd0]{\huge $0$} & \makebox[\wd0]{\huge $0$} & \usebox{0}

\end{array}
\right)
\]
where $B_1$, $B_2$ are $k\times k$ invertible upper-triangular matrices. Let $F_\bullet $ and $G_\bullet$ be the length $k$ flags as above, corresponding to $B_1$ and $B_2$. If $W\in \grass(k,n)$ then the $B$-orbit of $W$ is the set of all $W' \in \grass(k,n)$ such that $I_W = I_{W'}$.
\end{lemma}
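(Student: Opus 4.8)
The plan is to reduce to the case $n=2k$ treated in Lemma~\ref{lemma:g(k,2k)_orbits} and Corollary~\ref{cor:incidence_orbits_lowdimension}. The guiding observation is that every subspace $F_i+G_j$ appearing in the incidence matrix lies in the fixed $2k$-dimensional subspace $F_k+G_k=\langle f_1,\dots,f_k,g_1,\dots,g_k\rangle$, so the incidence matrix cannot detect how $W$ meets the remaining coordinates $h_1,\dots,h_s$ beyond recording $\dim W\cap(F_k+G_k)$. Writing $\Pi=\langle h_1,\dots,h_s\rangle$, the first task is therefore to separate the ``$F_k+G_k$ part'' of $W$, which is governed by the already solved case, from its image in $V/(F_k+G_k)\cong\Pi$, which is governed by the last $s$ columns of $C$.

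First I would record the easy inclusion. Since $B_1,B_2$ are upper-triangular and the first $2k$ columns of any $C\in B$ vanish in the last $s$ rows, we have $Cf_p\in F_p$ and $Cg_q\in G_q$; hence $C(F_i+G_j)=F_i+G_j$ for all $0\le i,j\le k$, and invertibility of $C$ gives $\dim CW\cap(F_i+G_j)=\dim W\cap(F_i+G_j)$. Thus $I_{CW}=I_W$, and the $B$-orbit of $W$ is contained in $\{W':I_{W'}=I_W\}$.

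For the reverse inclusion I would fix $W$ and $T$ with $I_W=I_T$ and build $C\in B$ with $CW=T$ in two stages. Set $W_0=W\cap(F_k+G_k)$ and $T_0=T\cap(F_k+G_k)$; since $F_i+G_j\subseteq F_k+G_k$ one has $I_{W_0}=I_W=I_T=I_{T_0}$, and in particular $\dim W_0=\dim T_0=d$, the $(k,k)$-entry of $I_W$. Applying Corollary~\ref{cor:incidence_orbits_lowdimension} inside $F_k+G_k$, a block-diagonal element of $B$ (with $B_1,B_2$ in its diagonal blocks and the identity in the remaining coordinates) carries $W_0$ to the normal form $W_0'$ of Lemma~\ref{lemma:incidence_matrices}, spanned by vectors $f_{i_\ell}+g_{j_\ell}$, and similarly for $T_0$; so I may assume $W_0=T_0=W_0'$.

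It remains to match the transverse parts. Choosing a basis of $W$ that extends one of $W_0'$ by $k-d$ vectors whose images span the image of $W$ in $V/(F_k+G_k)$ (note $k-d\le s$, since $\dim W\cap(F_k+G_k)\ge k-s$), I would exploit the freedom in the last $s$ columns: each $Ch_r$ may be an arbitrary combination of the $f_p$ and $g_q$ together with an upper-triangular combination of the $h_\beta$. Solving for the off-diagonal blocks absorbs the $f,g$-components of the transverse vectors, reducing the problem to moving the image of $W$ onto that of $T$ inside $\Pi$ by the upper-triangular bottom-right block. This final alignment is the main obstacle: because that block is only upper-triangular, one must check it has enough freedom to carry the transverse part of $W$ to that of $T$, and it is precisely here that the position of the $h_\beta$ relative to the flags must be handled with care. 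Once the alignment is in place, composing it with the block-diagonal element from the previous stage produces the desired $C\in B$ with $CW=T$.
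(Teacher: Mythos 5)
Your two-stage decomposition (handle $W\cap(F_k+G_k)$ via Corollary \ref{cor:incidence_orbits_lowdimension}, then use the arbitrary off-diagonal blocks to absorb the $f,g$-components of the transverse vectors) is exactly the paper's strategy, and your easy inclusion is correct. But the step you defer at the end --- checking that the upper-triangular bottom-right block ``has enough freedom to carry the transverse part of $W$ to that of $T$'' --- is not a technicality that can be postponed: it fails, and no choice of the other entries can compensate. Write $\Pi=\langle h_1,\dots,h_s\rangle$ for the span of the extra basis vectors. Reduction modulo $F_k+G_k$ identifies the quotient of the ambient space with $\Pi$ and is equivariant for the homomorphism sending $C\in B$ to its bottom-right block $B_3$; hence the $B_3$-orbit of the image $\overline{W}\subset\Pi$ is an invariant of the $B$-orbit of $W$. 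Since $B_3$ is upper-triangular, its orbits on $\grass(k-k',\Pi)$ are Schubert cells, not all of $\grass(k-k',\Pi)$, while the incidence matrix only records intersections with subspaces of $F_k+G_k$ and sees nothing of $\overline{W}$. Concretely, take $k=s=2$, $W=\langle f_1,h_1\rangle$, $W'=\langle f_1,h_2\rangle$: both have the same incidence matrix ($\dim W\cap(F_i+G_j)=1$ exactly when $i\geq 1$), yet for any $C\in B$ the vector $Ch_1$ has nonzero $h_1$-coordinate and zero $h_2$-coordinate, so the image of $CW$ in $\Pi$ is always $\langle h_1\rangle$ and $CW\neq W'$. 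Thus $W$ and $W'$ have equal incidence matrices but lie in different $B$-orbits, and the lemma as stated is false whenever $s\geq 2$.

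You should know that the paper's own proof breaks at precisely this point: it asserts that the $B$-orbit of the transverse part $\langle w_{k'+1},\dots,w_k\rangle$ is the set of \emph{all} $(k-k')$-dimensional subspaces not meeting $F_k+G_k$, which the same computation refutes. So your instinct that this alignment is ``the main obstacle'' was exactly right; the honest conclusion is that it cannot be carried out. The statement can be repaired by enlarging the invariant: record also $\dim W\cap(F_k+G_k+H_r)$ for $0\leq r\leq s$, where $H_r=\langle h_1,\dots,h_r\rangle$. With this extended incidence data your two-stage argument does go through: the part inside $F_k+G_k$ is handled by Corollary \ref{cor:incidence_orbits_lowdimension}, the graph maps $\overline{W}\to (F_k+G_k)/W_0$ are absorbed by the arbitrary off-diagonal block, and $B_3$ moves $\overline{W}$ transitively within its Schubert cell, so the $B$-orbits are exactly the classes of the extended data. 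Orbit closures are then still intersections of Schubert varieties (now with respect to flags built from the $F_i+G_j$ and the $F_k+G_k+H_r$), there are still finitely many orbits, and Theorem \ref{thm:2pts} survives with only a cosmetic change to its statement.
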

\begin{proof}
Let $W \in \grass(k,n)$, and let $V = W \cap (F_k + G_k)$. Then $V \in \grass(k',F_k+G_k) = \grass(k',2k)$, where $k' = \dim W \cap (F_k + G_k)$. By Corollary \ref{cor:incidence_orbits_lowdimension}, the $B$-orbit of $V$ is the set of all $V' \in \grass(k',2k)$ with $I_{V'} = I_{V}$. Let $w_{k'+1}, \dots, w_{k} \in W$ be linearly independent vectors so that $V + \langle w_{k'+1}, \dots, w_k \rangle = W$. Then the $B$-orbit of $\langle w_{k'+1}, \dots, w_k \rangle$ is the set of all $(k-k')$-dimensional spaces that do not meet $F_k + G_k$. This completes the proof.
\end{proof}

\begin{thm}\label{thm:2pts}
Let $\Gamma$ be a set of 2 general points on $\grass(k,n)$ and $X_\Gamma$ the blow-up of $\grass(k,n)$ along $\Gamma$. Then for all $0 \leq m \leq k(n-k)$, $\ceff_m(X_\Gamma)$ is generated by strict transforms of Schubert cycles and strict transforms of intersections of Schubert cycles defined with respect to flags defined in terms of the $f_i$ and $g_j$. In particular, $\ceff_m(X_\Gamma)$ is finitely generated for all $m$.
\end{thm}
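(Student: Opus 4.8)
The plan is to run the same equivariant argument used for a single point in Corollary~\ref{cor:lin_gen1pt}, now with the block-diagonal group $B$ of Lemma~\ref{lemma:2k+s} (respectively Lemma~\ref{lemma:g(k,2k)_orbits} when $n=2k$), whose orbit structure has just been determined. First I would reduce to the case $n \geq 2k$. This reduction is not merely cosmetic: the block-diagonal setup requires $F_k$ and $G_k$ to meet trivially, so that $\{f_1,\dots,f_k,g_1,\dots,g_k\}$ is part of a basis, and two general $k$-planes have trivial intersection precisely when $n \geq 2k$. When $n < 2k$ I would invoke the Grassmann duality isomorphism $\grass(k,n) \cong \grass(n-k,n)$, which carries two general points to two general points and Schubert varieties to Schubert varieties; after replacing $k$ by $n-k$ one has $n \geq 2(n-k)$, so we may assume $n \geq 2k$. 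Since $\gl(V)$ acts transitively on pairs of $k$-planes in general position, I may then choose a basis so that the two points of $\Gamma$ are $F_k = \langle f_1,\dots,f_k\rangle$ and $G_k = \langle g_1,\dots,g_k\rangle$. By construction $B$ fixes both points, so the $B$-action lifts to $\pi\colon X_\Gamma \to \grass(k,n)$, and $\pi$ is $B$-equivariant. Applying Lemma~\ref{lemma:anderson} to the connected solvable group $B$ acting on the complete variety $X_\Gamma$, the cone $\ceff_m(X_\Gamma)$ is generated by the classes of $B$-invariant $m$-cycles.

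It then remains to identify the irreducible $B$-invariant subvarieties $Y \subseteq X_\Gamma$, which I would split into two cases using the equivariant map $\pi$. If $Y \subseteq E_i$ for some $i$, then since $E_i \cong \P^{k(n-k)-1}$ has $A_m(E_i) \cong \Z$ generated by a linear space, the class of $Y$ is a positive multiple of $E_{i,[m]}$ (the analogue of part (1) of Lemma~\ref{lemma:positive_coeffs} for cycle classes); hence such cycles contribute only the rays through $E_{1,[m]}$ and $E_{2,[m]}$, and there is no need to enumerate the $B$-orbits inside $E_i$. If $Y \not\subseteq E_1 \cup E_2$, then $Y$ is the \emph{strict transform} of $\pi(Y)$, an irreducible $B$-invariant subvariety of $\grass(k,n)$. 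Because the incidence matrices $I_W$ are $(k+1)\times(k+1)$ integer matrices with bounded, monotone entries, there are only finitely many of them, so Lemma~\ref{lemma:2k+s} (or Lemma~\ref{lemma:g(k,2k)_orbits}) shows $\grass(k,n)$ is a finite union of $B$-orbits. Consequently $\pi(Y)$ is the closure of a single such orbit, which is exactly an intersection of Schubert varieties taken with respect to the flags $H_\bullet$ with $H_\ell = F_{i_\ell} + G_{j_\ell}$, and $Y$ is the strict transform of that intersection.

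Finiteness of the generating set, and hence rational polyhedrality of $\ceff_m(X_\Gamma)$, follows at once: there are two exceptional rays, and for each $m$ at most one generator per $m$-dimensional $B$-orbit of $\grass(k,n)$, of which there are finitely many. I expect the only genuine obstacle to be bookkeeping at the two ends of the argument rather than any deep step, since the orbit computation—the real content—is already in hand. Specifically, I would need to confirm (i) that the duality reduction faithfully preserves the description ``flags defined in terms of the $f_i$ and $g_j$'' (it does, after replacing the $f_i, g_j$ by a dual basis), and (ii) that $B$-invariant cycles supported on the exceptional divisors contribute nothing beyond the linear classes $E_{i,[m]}$, so that the generating set—strict transforms of Schubert cycles and of their intersections, together with the two exceptional linear classes—is complete.
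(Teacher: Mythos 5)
Your proposal is correct and follows essentially the same route as the paper: reduce to $n \geq 2k$ by Grassmann duality, let the block upper-triangular group $B$ of Lemma~\ref{lemma:2k+s} act on $X_\Gamma$, invoke Lemma~\ref{lemma:anderson}, and identify the $B$-invariant cycles with strict transforms of orbit closures (intersections of Schubert varieties) via the incidence-matrix classification. Your write-up is in fact slightly more careful than the paper's at two points: you state the duality correctly as $\grass(k,n)\cong\grass(n-k,n)$ (the paper misprints it as $\grass(k,n)\cong\grass(k,n-k)$), and you explicitly dispose of $B$-invariant cycles contained in the exceptional divisors, a case the paper's proof passes over silently.
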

\begin{proof}
For any Grassmannian $\grass(k,n)$, there is an isomorphism $\grass(k,n) \cong \grass(k,n-k)$. Without loss of generality, we may therefore assume $n \geq 2k$. Let $V$ be an $n$-dimensional vector space and choose a basis for $V$ so that the subgroup $B$ of upper-triangular matrices of the form in Lemma \ref{lemma:2k+s} fixes $\Gamma$. If $\pi:X_\Gamma \to\grass(k,n)$ is the blow-up of $\grass(k,n)$ at $\Gamma$, then the $B$-action extends to $X_\Gamma$ and $B E_i = E_i$, where $E_i$ are the exceptional divisors. Let $Y\subset X_\Gamma$ be a $B$-invariant irreducible $m$-cycle. Since $\pi$ is $B$-equivariant, $\pi(Y)$ is $B$-invariant too.

Since $\pi(Y)$ is $B$-invariant, it is the union of its $B$-orbits. By Lemma \ref{lemma:2k+s}, $\pi(Y)$ is an intersection of Schubert varieties. Thus $Y$ is the strict transform of an intersection of Schubert varieties. It follows that all extremal $m$-cycles are strict transforms of such intersections.
\end{proof}

It is reasonable to wonder if this argument extends to $\grass(k,3k)$ or indeed to $\grass(k,dk)$ for $d \geq 3$. Unfortunately, this does not appear to be the case. The dimension of $\grass(k,dk)$ is $(d-1)k^2$, but if $B \subset \gl_{3k}$ is the group of block-diagonal matrices with $d$ blocks of $k\times k$ upper-triangular matrices, then $B$ has dimension $dk(k+1)/2$. Thus for $d >2$ and large $k$, $B$ cannot have a dense orbit and therefore cannot have finitely many orbits.

\section{S-generation of cones of divisors and curves}\label{sec:divisors}
In this section we describe the pseudoeffective cones of divisors and curves on blow-ups of Grassmannians. We will denote $\grass(k,n)$ by $X$, and if $\Gamma$ is a set of points on $X$ then $X_\Gamma$ will denote the blow-up of $X$ along $\Gamma$. Recall that $H$ is the the pullback of the Schubert class $\sigma_1$ to $X_\Gamma$ and $\ell$ its dual, i.e., the pullback of the one-dimensional Schubert class. By Schubert calculus, we have $\ell \cdot H = 1$. Taking $\lambda = (1,0,\dots,0)$ in Corollary \ref{cor:lin_gen1pt}, we see that $\ceff^1(X_1)$ is generated by the exceptional class $E$ and $H - kE$.

A more geometric argument allows us to describe the pseudoeffective cone of divisors on the blow-up of $\grass(k,2k)$ at two points. The key ingredient is the explicit description of the singular locus of the Schubert variety $\Sigma_1$ given in Rosenthal-Zelevinsky \cite{rosenthal-zelevinsky}.

\begin{lemma}\label{lemma:dual_cone_2pts}
Let $\alpha = ae_0 - b_1e_1 - b_2e_2 \in \Z^3$ satisfy $ka \geq b_1+b_2$, with $a,b_1,b_2,k \geq 0$. Then $\alpha$ is in the positive linear span of the vectors $e_i$ for $0 \leq i \leq 2$ and $\beta_m = e_0 - me_1 - (k-m)e_2$ for $0 \leq m \leq k$.
\end{lemma}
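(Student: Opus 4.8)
The plan is to reduce the whole problem to the three extreme generators $e_0$, $\beta_0 = e_0 - ke_2$, and $\beta_k = e_0 - ke_1$, and to exhibit $\alpha$ as an explicit nonnegative combination of just these. A first observation motivates this: since $\beta_m = e_0 - me_1 - (k-m)e_2$ depends affinely on $m$, one checks at once that for $k \geq 1$
\[
\beta_m = \tfrac{k-m}{k}\,\beta_0 + \tfrac{m}{k}\,\beta_k,
\]
so every intermediate $\beta_m$ with $0 < m < k$ is a convex combination of $\beta_0$ and $\beta_k$ and contributes nothing new to the cone. It will in fact suffice to place $\alpha$ inside the smaller cone spanned by $e_0$, $\beta_0$, and $\beta_k$ alone; the generators $e_1$ and $e_2$ are not needed for this direction.

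Assume first $k \geq 1$. I would look for coefficients $c_0, d_0, d_k \geq 0$ with
\[
\alpha = c_0 e_0 + d_0 \beta_0 + d_k \beta_k.
\]
Comparing $e_1$-components forces $d_k = b_1/k$, comparing $e_2$-components forces $d_0 = b_2/k$, and comparing $e_0$-components then forces
\[
c_0 = a - \frac{b_1 + b_2}{k}.
\]
The coefficients $d_0$ and $d_k$ are automatically nonnegative because $b_1, b_2 \geq 0$, and the single remaining inequality $c_0 \geq 0$ is \emph{exactly} the hypothesis $ka \geq b_1 + b_2$. A direct substitution confirms the identity, settling this case.

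The only point requiring separate attention is the degenerate case $k = 0$, where the formulas above divide by zero. Here the hypothesis $ka \geq b_1 + b_2$ reads $0 \geq b_1 + b_2$, which together with $b_1, b_2 \geq 0$ forces $b_1 = b_2 = 0$; then $\alpha = a e_0$ is a nonnegative multiple of $e_0$ and lies in the cone trivially. I expect no genuine obstacle in this argument: the entire content is the one linear-algebra computation above, and the role of the hypothesis becomes transparent once one notices that $\beta_0$ and $\beta_k$ are the only rays carrying, respectively, the pure $e_2$- and $e_1$-directions, so they alone must absorb the coefficients $-b_2$ and $-b_1$.
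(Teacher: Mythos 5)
Your proof is correct, and it takes a genuinely different route from the paper. The paper argues by induction on the integer $a$: in the base case $a=1$ it absorbs the slack in $ka \geq b_1+b_2$ into multiples of $e_1,e_2$ so that $\alpha$ becomes $\beta_m$ plus exceptional classes, and for $a>1$ it subtracts some $\beta_m$ and notes the defining inequality is preserved. You instead give a one-line closed-form decomposition $\alpha = \bigl(a - \tfrac{b_1+b_2}{k}\bigr)e_0 + \tfrac{b_2}{k}\beta_0 + \tfrac{b_1}{k}\beta_k$, together with the observation that every intermediate $\beta_m$ is a convex combination of $\beta_0$ and $\beta_k$, so the cone in question is really just the span of $e_0,e_1,e_2,\beta_0,\beta_k$. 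Your version buys several things: it is shorter and makes the role of the hypothesis $ka \geq b_1+b_2$ completely transparent (it is exactly the nonnegativity of the $e_0$-coefficient); it handles the degenerate case $k=0$ explicitly, which the paper does not address; and it sidesteps a bookkeeping subtlety in the induction, namely that subtracting $\beta_m$ for an arbitrary $m$ can make a coefficient $b_i$ negative, where the conclusion of the lemma can genuinely fail, so the paper's step implicitly requires choosing $m$ appropriately. The paper's induction has the mild advantage of producing integer coefficients (yours are rational), but since ``positive linear span'' here only requires nonnegative real coefficients — the lemma is applied to generate a real convex cone of divisor classes in Theorem 3.10 — this costs nothing.
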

\begin{proof}
We induct on $a$. Suppose $a=1$. By subtracting $e_0$ from $\alpha$, we may assume $ka = b_1 + b_2$. Then $\alpha = \beta_m$ for some $m$. For $a>1$, observe that if $\alpha$ satisfies the inequality $ka \geq b_1 + b_2$ then so does $\alpha - \beta_m$ for any $0 \leq m \leq k$.
\end{proof}

\begin{thm}\label{thm:divisors_2pts}
Let $\Gamma$ be a set of $r =2$ general points on $\grass(k,2k)$. Then $\ceff^1(X_\Gamma)$ is S-generated. Moreover, $\ceff^1(X_\Gamma)$ is spanned by the exceptional classes $E_1$, $E_2$, and classes of the form $H-mE_1 - (k-m)E_2$.
\end{thm}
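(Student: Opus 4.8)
The plan is to show that $\ceff^1(X_\Gamma)$ equals the cone $\mathcal{C}$ spanned by $E_1$, $E_2$, and the classes $\beta_m = H - mE_1 - (k-m)E_2$ for $0 \le m \le k$, proving each inclusion separately and using Lemma~\ref{lemma:dual_cone_2pts} to repackage the bounding inequalities into this spanning set. For $\mathcal{C} \subseteq \ceff^1(X_\Gamma)$, the classes $E_1, E_2$ are effective, so it suffices to realize each $\beta_m$ by a Schubert divisor. Writing $p_i = [W_i]$, generality gives $W_1 \cap W_2 = 0$; I would pick $A \subset W_1$ of dimension $m$ and $B \subset W_2$ of dimension $k-m$ and set $F_k = A \oplus B$, completed to a flag $F_\bullet$. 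Since $n-k=k$ here, $\Sigma_1(F_\bullet) = \{W : \dim(W \cap F_k) \ge 1\}$, and one checks $W_1 \cap F_k = A$ and $W_2 \cap F_k = B$ (any $w = a+b \in W_i \cap F_k$ forces the $W_{3-i}$-component into $W_1 \cap W_2 = 0$). Using that $\operatorname{mult}_{[W]}\Sigma_1 = \dim(W \cap F_k)$ (from the local determinantal equations of $\Sigma_1$, equivalently Theorem~\ref{thm:r-z}), Lemma~\ref{lemma:positive_coeffs} gives the strict transform of $\Sigma_1(F_\bullet)$ the class $H - mE_1 - (k-m)E_2 = \beta_m$. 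Thus each $\beta_m$ is effective and is a strict transform of a Schubert divisor, so $\mathcal{C}$ is S-generated.

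For the reverse inclusion I would bound an arbitrary irreducible effective divisor $D$. If $D \subseteq E_i$, then $[D] = cE_i$ by Lemma~\ref{lemma:positive_coeffs}(1). Otherwise $[D] = aH - b_1E_1 - b_2E_2$ with $a,b_1,b_2 \ge 0$ and $b_i = \operatorname{mult}_{p_i}\pi(D)$, and the decisive point is the inequality $ka \ge b_1 + b_2$. To get it I would produce a covering family of curves through both points with controlled intersection numbers. For each isomorphism $\psi\colon W_1 \to W_2$, let $C_\psi\colon \P^1 \to \grass(k,2k)$ be $t \mapsto \{w + t\psi(w) : w \in W_1\}$, which passes through $p_1$ at $t=0$ and $p_2$ at $t=\infty$. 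Its Pl\"ucker parametrization is a degree-$k$ polynomial with nonvanishing constant and leading terms, so $H \cdot C_\psi = k$, and $C_\psi$ is immersed at $p_1,p_2$, giving $E_i \cdot \widetilde{C}_\psi = 1$ for the strict transform. Every such curve has the same class $k\ell - \ell_1 - \ell_2$ in $N_1(X_\Gamma)$, whence $D \cdot \widetilde{C}_\psi = ka - b_1 - b_2$ independently of $\psi$.

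Next I would verify that $\{C_\psi\}$ dominates $\grass(k,2k)$: a $k$-plane complementary to both $W_1$ and $W_2$ is the graph of a unique isomorphism $\theta$, hence lies on $C_\theta$ at $t=1$, and such planes form a dense open set. Consequently, for an irreducible $D$ not contained in an $E_i$, a general $\widetilde{C}_\psi$ is not contained in $D$, so $D \cdot \widetilde{C}_\psi \ge 0$, yielding $ka \ge b_1 + b_2$. Lemma~\ref{lemma:dual_cone_2pts} then places $[D]$ in the span of $E_1, E_2$ and the $\beta_m$ (the extra generator $e_0 = H = \beta_0 + kE_2$ already lies in $\mathcal{C}$). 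Since every effective class is a nonnegative combination of such $[D]$ and $\mathcal{C}$ is closed, passing to limits gives $\ceff^1(X_\Gamma) = \mathcal{C}$.

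The main obstacle is the construction and verification of the covering family $\{C_\psi\}$: one must simultaneously arrange that the curves have degree \emph{exactly} $k$, are smooth at both points so that $E_i \cdot \widetilde{C}_\psi = 1$, and that the family dominates the Grassmannian. A secondary technical point supporting the effectivity direction is the multiplicity identity $\operatorname{mult}_{[W]}\Sigma_1 = \dim(W \cap F_k)$, which I would justify either through the local determinantal description of $\Sigma_1$ or directly from Theorem~\ref{thm:r-z}.
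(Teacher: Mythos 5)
Your proposal is correct, and its skeleton is the same as the paper's: both reduce the theorem via Lemma~\ref{lemma:dual_cone_2pts} to (i) the inequality $ka \geq b_1+b_2$ for effective classes and (ii) effectivity of $H-mE_1-(k-m)E_2$, and your effectivity construction is exactly the paper's, just phrased linearly rather than projectively --- the paper's $\Phi$, spanned by $m$ points of $\Lambda_1$ and $k-m$ points of $\Lambda_2$, is your $F_k = A\oplus B$, and both arguments rest on the identity $\operatorname{mult}_{[W]}\Sigma_1 = \dim (W\cap F_k)$, which the paper extracts from the Rosenthal--Zelevinsky description of multiplicity loci. The one genuine divergence is the source of the moving curve of class $k\ell-\ell_1-\ell_2$: the paper cites Lemma~2.5 of \cite{coskun-lwr} for a degree-$k$ scroll through \emph{three} general $(k-1)$-planes, the generality of the third plane being what makes the curve moving; you instead build the pencils $C_\psi(t)=\{w+t\psi(w)\}$ by hand and get the moving property from dominance, since any $k$-plane complementary to $W_1$ and $W_2$ is the graph of an isomorphism $W_1\to W_2$. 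Your verifications all go through: the Pl\"ucker parametrization $(w_1+t\psi(w_1))\wedge\cdots\wedge(w_k+t\psi(w_k))$ has nonzero constant and leading coefficients and never vanishes at finite $t$ (linear independence of the $w_i+t\psi(w_i)$ follows from $W_1\cap W_2=0$), so the degree is exactly $k$; the derivative at $t=0$ lands in the $\Lambda^{k-1}W_1\wedge W_2$ graded piece and is nonzero, and the curve meets $p_1$ only at $t=0$, giving multiplicity one at each point. So your route is self-contained and elementary where the paper's citation buys brevity; geometrically the two constructions produce the same objects, since $\bigcup_t \P\left(C_\psi(t)\right) \subset \P^{2k-1}$ is precisely a degree-$k$ scroll through $\P(W_1)$ and $\P(W_2)$.
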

\begin{proof}
Let $\alpha = aH - b_1E_1 - b_2E_2 \in \ceff^1(X_\Gamma)$. By Lemma \ref{lemma:dual_cone_2pts}, it suffices to establish (1) the inequality $ka \geq b_1 + b_2$ and (2) that $H - mE_1 - (k-m)E_2$ is effective for $0 \leq m \leq k$. To show the inequality we construct a moving curve of class $k\ell - \ell_1 - \ell_2$. Let $\Lambda_1$, $\Lambda_2$, $\Lambda_3$ be general $(k-1)$-planes in $\P^{2k-1}$. By \cite[Lemma 2.5]{coskun-lwr}, there is a degree $k$ scroll containing $\Lambda_1$, $\Lambda_2$ and $\Lambda_3$. This scroll corresponds to a degree $k$ curve in $\grass(k,2k)$ containing the points corresponding to the $\Lambda_i$. Thus in the blow-up $X_\Gamma$ of $\grass(k,2k)$ at $\Lambda_1$ and $\Lambda_2$, $k\ell - \ell_1 - \ell_2$ is a moving curve and has positive intersection with any effective divisor class. This gives the desired inequality.

Given two general $(k-1)$-planes $\Lambda_1$ and $\Lambda_2$, choose $m$ points on $\Lambda_1$ and $k-m$ points on $\Lambda_2$. Let $\Phi$ be the $(k-1)$-plane spanned by these points. Let $\Sigma_1$ be the Schubert variety of $(k-1)$-planes meeting $\Phi$. The singular locus of $\Sigma_1$ of multiplicity at least $m$ is the Schubert variety $\Sigma_{\lambda}$, where
\[
	\lambda_i = \begin{cases}
	m & 1 \leq i \leq m\\
	0 & m+1 \leq i \leq k.
	\end{cases}
\]
This is the Schubert variety of $(k-1)$-planes meeting $\Phi$ in dimension at least $m-1$. By construction, $\Lambda_1$ is in the multiplicity-$m$ locus of $\Sigma_1$ and $\Lambda_2$ is in the multiplicity-$(k-m)$ locus. Thus in the blow-up $X_\Gamma$ of $\grass(k,2k)$ at $\Gamma = \{\Lambda_1, \Lambda_2\}$, the class $H - m E_1 - (k-m)E_2$ is effective. This holds for any $m$ between 0 and $k$, so the proof is completed.
\end{proof}

We now consider curves. S-generation of $\ceff_1(X_\Gamma)$ is comparatively easy to determine because the effective generator of $N_1(\grass(k,n))$ can pass through precisely one general point.

\begin{prop}\label{prop:lines_through_one_pt}
Suppose $k \geq 2$ and $n-k \geq 2$ and $\sigma = \sigma_{n-k,n-k,\dots,n-k,n-k-1}$ is the one-dimensional Schubert class corresponding to $k$-planes contained in a fixed $(k+1)$-plane $\Lambda$ that contain a fixed $(k-1)$-plane $\Phi \subset \Lambda$. Then there is no variety of class $\sigma$ through more than one general point of $\grass(k,n)$.
\end{prop}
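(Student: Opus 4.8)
The plan is to show that every variety of class $\sigma$ is a line on $\grass(k,n)$ in its Pl\"ucker embedding, and then to check that such a line is too special to contain two general points. The conceptual crux is the reduction to lines; once that is in hand, the rest is a dimension count.

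First I would pin down the geometry of a variety $Y$ of class $\sigma$. Pieri's formula gives $\sigma \cdot \sigma_1 = \sigma_{n-k,\dots,n-k}$, the point class, so $\sigma \cdot H = 1$. Since $\sigma$ generates $A_1(\grass(k,n)) \cong \Z$, a variety of class $\sigma$ is irreducible and reduced, and its degree in the Pl\"ucker embedding equals $\sigma \cdot H = 1$. Hence $Y$ is a line of $\P(\Lambda^k V)$ lying on $\grass(k,n)$. By the classical description of lines on a Grassmannian, $Y$ has the form $L_{\Phi,\Lambda} = \{W : \Phi \subset W \subset \Lambda\}$ for some flag $\Phi \subset \Lambda$ with $\dim \Phi = k-1$ and $\dim \Lambda = k+1$; this is precisely the incidence condition describing $\sigma$ in the statement, now with $\Phi$ and $\Lambda$ allowed to be arbitrary rather than coordinate subspaces.

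Next I would translate ``$L_{\Phi,\Lambda}$ passes through both $W_1$ and $W_2$'' into a condition on the planes themselves. If distinct $W_1, W_2$ both lie on $L_{\Phi,\Lambda}$, then $\Phi \subset W_1 \cap W_2$ and $W_1 + W_2 \subset \Lambda$, which forces $\dim(W_1 \cap W_2) \geq k-1$; as $W_1 \neq W_2$ this must be an equality, and then $\Phi = W_1 \cap W_2$ and $\Lambda = W_1 + W_2$ are determined. Thus two distinct $k$-planes lie on a common line precisely when they meet in dimension $k-1$.

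Finally I would invoke genericity. For general $W_1, W_2 \in \grass(k,n)$ one has $\dim(W_1 \cap W_2) = \max(0, 2k-n)$, and the hypotheses $k \geq 2$, $n-k \geq 2$ give $\max(0,2k-n) \leq k-2 < k-1$ (if $n \geq 2k$ the intersection is $0$, and if $n < 2k$ then $n \geq k+2$ forces $2k-n \leq k-2$). Hence no line, and therefore no variety of class $\sigma$, passes through two general points. Equivalently, the union $S_{W_1} = \{W : \dim(W \cap W_1) \geq k-1\}$ of all lines through a fixed general point $W_1$ has dimension $n-1$, while $k(n-k) - (n-1) = (k-1)(n-k-1) > 0$, so $S_{W_1}$ is a proper subvariety and a second general point avoids it. The one step demanding care beyond these counts is the appeal to the classification of lines on $\grass(k,n)$, which identifies every class-$\sigma$ variety with some $L_{\Phi,\Lambda}$.
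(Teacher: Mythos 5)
Your proof is correct and follows essentially the same route as the paper: identify varieties of class $\sigma$ with lines in the Pl\"ucker embedding (hence with pencils $L_{\Phi,\Lambda}$), then rule out two general points by a dimension count. The only cosmetic difference is that you phrase the final contradiction via $\dim(W_1 \cap W_2) \leq k-2$ while the paper uses the equivalent condition $\dim(W_1 + W_2) \geq k+2$ versus containment in the $(k+1)$-plane $\Lambda$.
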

\begin{proof}
A curve on the Pl\"ucker embedding $\grass(k,n)\to \P^{\binom{n}{k}-1}$ is a line if and only if it has class $\sigma$. It follows that the class $\sigma$ is rigid, and therefore it suffices to prove the claim for Schubert varieties. Let $W$ and $W'$ be two general $k$-planes in an $n$-dimensional vector space $V$. Then $\dim (W+W') \geq 2+k$. But if $W$ and $W'$ are contained in $\sigma$ then they must be contained in the same $(k+1)$-plane, a contradiction.
\end{proof}

It follows that $\ceff_1(X_\Gamma)$ is S-generated if and only if every pseudoeffective class $\alpha$ can be written as a positive linear combination of the classes $\ell-\ell_i$ and $\ell_i$, for $1 \leq i \leq r$. More generally, we have the following criterion for a class $\alpha$ in $\ceff^m(X_\Gamma)$ to be effective.

\begin{lemma}\label{lem:sum_alambda}
Let $\alpha\in \ceff^m(X_\Gamma)$ be a class $\alpha = \sum_{|\lambda|=m}a_\lambda \sigma_\lambda - \sum_{i=1}^r b_i E_i^{[m]}$, $b_i \geq 0$ for all $0 \leq i \leq r$, such that
\[
	\sum_{|\lambda| = m}a_\lambda \geq \sum_{i=1}^r b_i.
\]
Then $\alpha$ is in the span of the classes $\sigma_\lambda - E_i^{[m]}$ and $E_i^{[m]}$ for $0 \leq i \leq r$.
\end{lemma}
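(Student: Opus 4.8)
The plan is to reduce the statement to a combinatorial feasibility question about bipartite flows. Since the $\sigma_\lambda$ with $|\lambda|=m$ together with the $E_i^{[m]}$ form a basis of $N^m(X_\Gamma)$, saying that $\alpha$ lies in the cone generated by the classes $\sigma_\lambda - E_i^{[m]}$ and $E_i^{[m]}$ amounts to finding nonnegative numbers $c_{\lambda,i}$ and $d_i$ with
\[
	\alpha = \sum_{\lambda,i} c_{\lambda,i}\bigl(\sigma_\lambda - E_i^{[m]}\bigr) + \sum_i d_i E_i^{[m]}.
\]
Comparing coefficients in the basis, this is equivalent to producing $c_{\lambda,i} \ge 0$ with row sums $\sum_i c_{\lambda,i} = a_\lambda$ for every $\lambda$ and column sums $\sum_\lambda c_{\lambda,i} \ge b_i$ for every $i$; one then simply sets $d_i = \sum_\lambda c_{\lambda,i} - b_i \ge 0$. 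Thus everything reduces to distributing the ``supplies'' $a_\lambda$ among the indices $i$ so that each $i$ receives at least its ``demand'' $b_i$.

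This is exactly a transportation problem on the complete bipartite graph whose left vertices are the partitions $\lambda$ and whose right vertices are the points $p_i$. Because every $\sigma_\lambda$ may be paired with every $E_i^{[m]}$ there is no support constraint on the $c_{\lambda,i}$, so feasibility depends only on comparing total supply with total demand, and the hypothesis $\sum_\lambda a_\lambda \ge \sum_i b_i$ is precisely the statement that total supply dominates total demand.

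To keep the proof self-contained I would exhibit an explicit greedy solution rather than appeal to max-flow/min-cut. Initialize residual supplies $\tilde a_\lambda = a_\lambda$ and process $i = 1, \dots, r$ in turn; at step $i$ choose $c_{\lambda,i} \le \tilde a_\lambda$ with $\sum_\lambda c_{\lambda,i} = b_i$ and decrease each $\tilde a_\lambda$ accordingly. This is possible because the residual total before step $i$ is $\sum_\lambda a_\lambda - \sum_{j<i} b_j \ge \sum_{j \ge i} b_j \ge b_i$. Once all demands are met, add any leftover $\tilde a_\lambda$ to $c_{\lambda,1}$. The row sums are then exactly $a_\lambda$, the column sums equal $b_i$ for $i > 1$ and are at least $b_1$ for $i=1$, and all coefficients are nonnegative, as required.

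The only thing that needs genuine checking is the feasibility inequality at each greedy step, i.e. that the running residual supply never falls below the current demand; this is the single point where the global hypothesis $\sum_\lambda a_\lambda \ge \sum_i b_i$ is used, and it is the entire substance of the lemma. I do not anticipate any real obstacle: completeness of the bipartite graph trivializes the flow, and the statement is just the many-variable analogue of Lemma \ref{lemma:dual_cone_2pts}, which one could alternatively prove by the same induction on $\sum_\lambda a_\lambda$.
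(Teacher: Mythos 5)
Your proof is correct and is essentially the paper's own argument: the paper proceeds by induction on $r$, at each step choosing coefficients $a'_\lambda$ with $0 \leq a'_\lambda \leq a_\lambda$ and $\sum_\lambda a'_\lambda = b_r$ and peeling off $\sum_\lambda a'_\lambda(\sigma_\lambda - E_r^{[m]})$, which is precisely your greedy allocation step, unrolled over the points. The feasibility check you isolate (residual supply $\sum_\lambda a_\lambda - \sum_{j<i} b_j \geq b_i$) is the same inequality that makes the paper's inductive hypothesis applicable.
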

\begin{proof}
We induct on $r$. In the case $r=1$ write $\alpha = \sum_\lambda a_\lambda \sigma_\lambda - bE^{[m]}$. Then
\[
\alpha = \sum_{\lambda}a_\lambda(\sigma_\lambda - E^{[m]}) + \left(-b+\sum_\lambda a_\lambda\right)E^{[m]}.
\]
Suppose the claim holds for $r-1$ points. Choose numbers $a'_\lambda$ such that $0 \leq a'_\lambda \leq a_\lambda$ and $\sum_\lambda a'_\lambda = b_r$. We have
\[
\alpha = \sum_{\lambda} (a_\lambda - a'_\lambda)\sigma_\lambda + \sum_{i=1}^{r-1} b_i E_i^{[m]} + \sum_{\lambda} a'_\lambda (\sigma_\lambda - E_r^{[m]}).
\]
Since $\sum_{\lambda} (a_\lambda - a'_\lambda) \geq \sum_{i=1}^{r-1} b_i$, the class $\sum_{\lambda} (a_\lambda - a'_\lambda)\sigma_\lambda + \sum_{i=1}^{r-1} b_i E_i^{[m]}$ is in the span of Schubert classes by the inductive hypothesis.
\end{proof}

\begin{prop}\label{prop:nef_implies_lingen}
If $\Gamma$ is a set of $r$ points on $\grass(k,n)$ then $\ceff_1(X_\Gamma)$ is S-generated if and only if the class $H - \sum_{i=1}^r E_i$ is nef.
\end{prop}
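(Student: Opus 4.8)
The plan is to identify S-generation with an inclusion of cones and then reduce everything to a single intersection computation. By the discussion following Proposition \ref{prop:lines_through_one_pt}, saying that $\ceff_1(X_\Gamma)$ is S-generated is exactly saying that $\ceff_1(X_\Gamma)$ equals the cone $S := \mathrm{cone}(\ell_i,\ \ell - \ell_i : 1 \le i \le r)$. Since each $\ell_i$ (a line in an exceptional divisor) and each $\ell - \ell_i$ (the strict transform of a line through one point) is an effective class, one always has $S \subseteq \ceff_1(X_\Gamma)$, so S-generation is equivalent to the reverse inclusion $\ceff_1(X_\Gamma) \subseteq S$. First I would record the intersection numbers $H \cdot \ell = 1$, $H \cdot \ell_i = 0$, $E_j \cdot \ell = 0$ and $E_j \cdot \ell_i = -\delta_{ij}$, from which $(H - \sum_j E_j)\cdot \ell = 1$, $(H - \sum_j E_j)\cdot \ell_i = 1$, and $(H - \sum_j E_j)\cdot(\ell - \ell_i) = 0$.

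The ``only if'' direction is then essentially automatic by duality. If $\ceff_1(X_\Gamma) = S$, the computation above shows that $H - \sum_i E_i$ pairs nonnegatively with every generator $\ell_i$ and $\ell - \ell_i$ of $S$, hence nonnegatively with every pseudoeffective curve class; this is precisely the assertion that $H - \sum_i E_i$ is nef.

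For the ``if'' direction, assume $H - \sum_i E_i$ is nef. Because $\ceff_1(X_\Gamma)$ is the closure of the cone spanned by classes of irreducible curves and $S$ is closed, it suffices to prove $[C] \in S$ for every irreducible curve $C$. If $C \subseteq E_i$, then $[C]$ is a positive multiple of $\ell_i \in S$. Otherwise $C$ is the strict transform of $\pi(C)$, a curve of degree $a = H \cdot C$ meeting $p_i$ with multiplicity $m_i = E_i \cdot C$, so $[C] = a\ell - \sum_i m_i \ell_i$. The step I expect to be the crux — and the only place nefness enters — is the observation that in this non-exceptional case each $m_i$ is a genuine multiplicity and hence $m_i \ge 0$ (cf.\ Lemma \ref{lemma:positive_coeffs}); pairing against the nef class then yields $a - \sum_i m_i = (H - \sum_i E_i)\cdot C \ge 0$. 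Writing
\[
[C] = \sum_{i} m_i(\ell - \ell_i) + \Big(a - \sum_i m_i\Big)\ell
\]
exhibits $[C]$ as a nonnegative combination of the generators $\ell - \ell_i$ and $\ell = (\ell - \ell_1) + \ell_1$, so $[C] \in S$, giving $\ceff_1(X_\Gamma) \subseteq S$ and completing the proof.

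The genuine content is concentrated in the last paragraph: the nonnegativity of the multiplicities $m_i$ is what converts the single inequality $a \ge \sum_i m_i$ furnished by nefness into membership in $S$, and the separate treatment of curves inside the exceptional divisors (whose classes carry no $\ell$-component) is needed so that the reduction to irreducible curves, together with closedness of $S$, handles all of $\ceff_1(X_\Gamma)$. The remaining direction is formal once $S$ is dualized.
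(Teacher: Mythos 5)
Your proposal is correct and follows essentially the same route as the paper: both directions come down to the computation $\left(H - \sum_i E_i\right)\cdot\left(a\ell - \sum_i b_i\ell_i\right) = a - \sum_i b_i$ together with nonnegativity of the $b_i$, and your explicit decomposition $[C] = \sum_i m_i(\ell - \ell_i) + \left(a - \sum_i m_i\right)\ell$ is exactly the curve case of the paper's Lemma \ref{lem:sum_alambda}, which the paper cites instead of writing out. The only (harmless) difference is that you reduce to irreducible curves using closedness of $S$, whereas the paper applies Lemma \ref{lemma:positive_coeffs} directly to an arbitrary pseudoeffective class; your version is, if anything, slightly more careful on that point.
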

\begin{proof}
Let $\alpha = a \ell - \sum b_i \ell_i \in \ceff_1(X)$. By Lemma \ref{lemma:positive_coeffs}, we may assume $b_i \geq 0$ for $0 \leq i \leq r$. Since $\ell \cdot H = 1$ we have:
\[
	\alpha \cdot \left( H - \sum_i E_i \right) = a - \sum b_i \geq 0.
\]
Thus the intersection is nonnegative if and only if $a \geq \sum b_i$ if and only if $\alpha$ is in the span of the classes $\ell$, $\ell_i$, and $\ell - \ell_i$ by Lemma \ref{lem:sum_alambda}.
\end{proof}

We can now show that $\ceff_m(X_\Gamma)$ is S-generated for $m=1,2$ when $\Gamma$ consists of at most $\binom{n}{k}-k(n-k)$ general points. We will make use of a few standard facts from Schubert calculus. First, $N^2(\grass(k,n))$ is spanned by the two Schubert classes $\sigma_2$ and $\sigma_{1,1}$. Similarly, $N_2(\grass(k,n))$ is spanned by the classes $\sigma_2^\ast = \sigma_{n-k,n-k,\dots,n-k,n-k-2}$ and $\sigma_{1,1}^\ast = \sigma_{n-k,n-k,\dots,n-k,n-k-1,n-k-1}$. These classes satisfy
\[
\sigma_2^\ast \cdot \sigma_2 = \sigma_{1,1}^\ast \cdot \sigma_{1,1} = 1, \qquad \sigma_2^\ast \cdot \sigma_{1,1} = \sigma_{1,1}^\ast \cdot \sigma_2 = 0, \qquad \sigma_1^2 = \sigma_2 + \sigma_{1,1}.
\]

\begin{prop}\label{prop:lingen_terriblebound}
Let $\Gamma$ be a set of $r$ general points on $\grass(k,n)$ and suppose
\[
r \leq \binom{n}{k} - k(n-k).
\]
Then
\begin{enumerate}
\item $\ceff_1(X_\Gamma)$ is generated by the exceptional classes and classes of the form $\ell - \ell_i$ for $0 \leq i \leq r$.
\item $\ceff_2(X_\Gamma)$ is generated by the exceptional classes and classes of the form $\sigma_{2}^\ast - E_{i,[2]}$ and $\sigma_{1,1}^\ast - E_{i,[2]}$ for $0 \leq i \leq r$, where $\sigma_\lambda^\ast$ denotes the the Schubert class dual to $\sigma_\lambda$.
\end{enumerate}
In particular, $\ceff_1(X_\Gamma)$ and $\ceff_2(X_\Gamma)$ are S-generated.
\end{prop}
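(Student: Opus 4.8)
The plan is to prove (1) and (2) in parallel by reducing each to the nefness of an explicit linear-section class. For (1), Proposition \ref{prop:nef_implies_lingen} already reduces the statement to showing that $H-\sum_{i=1}^r E_i$ is nef. For (2), I would use the intersection numbers recorded above: since $H^2=\sigma_2+\sigma_{1,1}$, we have $H^2\cdot\sigma_2^\ast=H^2\cdot\sigma_{1,1}^\ast=1$ and $H^2\cdot E_{i,[2]}=0$, while the projection formula together with $\mathcal O_{X_\Gamma}(E_i)|_{E_i}=\mathcal O_{\P^{k(n-k)-1}}(-1)$ gives $E_i^{[2]}\cdot E_{i,[2]}=-1$. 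Hence the codimension-two class $Z=H^2-\sum_{i=1}^r E_i^{[2]}$ pairs with $\alpha=a_2\sigma_2^\ast+a_{1,1}\sigma_{1,1}^\ast-\sum_i b_iE_{i,[2]}$ to give $Z\cdot\alpha=a_2+a_{1,1}-\sum_i b_i$. Writing any pseudoeffective $2$-cycle in the normal form of Lemma \ref{lemma:positive_coeffs} (the components contained in some $E_i$ contribute $E_{i,[2]}$ directly, and $Z\cdot E_{i,[2]}=1\ge0$), the inequality $Z\cdot\alpha\ge0$ is exactly the hypothesis $\sum_{|\lambda|=k(n-k)-2}a_\lambda\ge\sum_i b_i$ of Lemma \ref{lem:sum_alambda} for $m=k(n-k)-2$; as the only partitions of that size are those of $\sigma_2^\ast$ and $\sigma_{1,1}^\ast$, Lemma \ref{lem:sum_alambda} then writes $\alpha$ in terms of the asserted generators. (Their effectivity is immediate: $\sigma_2^\ast-E_{i,[2]}$ and $\sigma_{1,1}^\ast-E_{i,[2]}$ are strict transforms of the corresponding two-dimensional Schubert varieties, whose defining flags can be chosen so that the general point $p_i$ is a smooth point, giving multiplicity one.) Thus both parts come down to nefness of $H-\sum E_i$ and of $Z$.

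Next I would realize both classes inside the Pl\"ucker embedding $\grass(k,n)\subset\P^{\binom{n}{k}-1}$. A general hyperplane, respectively a general codimension-two linear subspace, through $p_1,\dots,p_r$ is transverse to $\grass(k,n)$ at each $p_i$, so its proper transform represents $H-\sum E_i$, respectively $Z$. The base locus of the system of all such linear spaces is cut out on $\grass(k,n)$ by their common intersection, namely the linear span $\langle p_1,\dots,p_r\rangle=\P^{r-1}$; hence in each case the base locus is contained in $\grass(k,n)\cap\P^{r-1}$. Nefness then follows from finiteness of this set by the dichotomy of Lemma \ref{lemma:positive_coeffs}: a curve (respectively surface) contained in some $E_i$ pairs non-negatively by the direct computations $(H-\sum E_j)\cdot\ell_i=1$ and $Z\cdot E_{i,[2]}=1$, while a curve or surface not contained in any $E_i$ cannot lie in the finite base locus and so meets a suitable member of the system properly, giving non-negative intersection. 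So everything reduces to the following.

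\emph{Key claim.} For general $p_1,\dots,p_r$ with $r\le\binom{n}{k}-k(n-k)$, the span $\langle p_1,\dots,p_r\rangle=\P^{r-1}$ meets $\grass(k,n)$ in a finite set. This is the main obstacle, and it is genuinely the crux. The expected dimension of $\grass(k,n)\cap\P^{r-1}$ is $(r-1)+k(n-k)-(\binom{n}{k}-1)=r+k(n-k)-\binom{n}{k}\le0$, so a \emph{general} $\P^{r-1}$ meets $\grass(k,n)$ finitely; the content is that the span of general \emph{points} of $\grass(k,n)$ is general enough to realize this expected dimension. I would prove it by an incidence-variety computation: let $I=\{(p_1,\dots,p_r,x)\in\grass(k,n)^{r+1}:x\in\langle p_1,\dots,p_r\rangle\}$, observe that the first projection is surjective (take $x=p_1$), and bound $\dim I$ through the second projection using the homogeneity of $\grass(k,n)$; the fibre over a fixed $x$ must be shown to have dimension at most $(r-1)k(n-k)$, which after projecting $\grass(k,n)$ away from $x$ becomes a bound on the locus where $r$ points of the projected variety fail to be in linear general position. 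The delicate point is ruling out defective behaviour, namely that the secant $\P^{r-1}$'s do not systematically meet $\grass(k,n)$ in positive dimension, and an induction on $r$ is the natural way to organize this. That the bound is sharp is already visible for $\grass(2,4)\subset\P^5$: two general points span a line meeting the quadric fourfold in exactly those two points, whereas three general points span a plane meeting it in a conic, producing the class $2\ell-\ell_1-\ell_2-\ell_3$ that violates nefness of $H-E_1-E_2-E_3$.
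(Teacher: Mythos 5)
Your treatment of part (1) is exactly the paper's: reduce via Proposition \ref{prop:nef_implies_lingen} to the nefness of $H-\sum_i E_i$, which follows once the base locus $\langle p_1,\dots,p_r\rangle\cap\grass(k,n)$ is finite. For part (2) you work with the same numerical class as the paper---your $Z=H^2-\sum_i E_i^{[2]}$ is precisely $\bigl(H-\sum_i E_i\bigr)^2$, since $H\cdot E_i=0$ and $E_i^2=-E_i^{[2]}$---but you propose to prove its positivity against pseudoeffective $2$-cycles geometrically, by exhibiting members of the family of codimension-two linear sections through $\Gamma$ meeting a given surface properly. The paper's route is much shorter and avoids every difficulty in yours: once $H-\sum_i E_i$ is nef, the inequality $0\le\bigl(H-\sum_i E_i\bigr)^2\cdot\beta=a_2+a_{1,1}-\sum_i b_i$ holds for all pseudoeffective $\beta$ simply because products of nef divisor classes pair non-negatively with pseudoeffective cycles (restrict two nef divisors to an irreducible surface and take limits from the ample case). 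By contrast, your version still owes two verifications: (i) that some member of the family meets a given surface $S$ in dimension $0$, not only away from the exceptional divisors but also inside them---the proper transform of a linear section meets $E_i$ in a linear $\P^{k(n-k)-3}$, and $S\cap E_i$ can be a curve contained in it, in which case the intersection has excess dimension and non-negativity of the intersection number is not automatic; and (ii) the transversality at each $p_i$ needed to identify the class of the proper transform. I would replace all of this with the nef-square argument.

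The more serious issue is the key claim, which you correctly identify as the crux but do not prove: your incidence-variety sketch ends with a fibre-dimension bound that ``must be shown'' and a ``delicate point'' of ruling out defective behaviour, and that is the entire content of the claim. (The paper, after reducing to $r=\binom{n}{k}-k(n-k)$, asserts finiteness directly from the complementary-dimension count; it is terse, but the assertion is correct.) The gap is genuinely fillable by a short induction on $r$ via projection, which I suggest in place of your incidence computation: if $\Lambda_j=\langle p_1,\dots,p_j\rangle$ meets $\grass(k,n)$ in a finite set, then the projection of $\grass(k,n)$ from $\Lambda_j$ is generically finite onto its image, because it factors as successive projections from single points, and the projection of an irreducible nondegenerate variety from a general point of itself fails to be generically finite only if the variety is a cone with vertex at a general point, hence a linear space---excluded by nondegeneracy whenever the dimension is smaller than that of the ambient projective space, which holds at every step here. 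Generic finiteness then says that for $p_{j+1}$ general, the fibre $\langle\Lambda_j,p_{j+1}\rangle\cap\grass(k,n)$ is finite off $\Lambda_j$, and combined with the inductive hypothesis, $\Lambda_{j+1}\cap\grass(k,n)$ is finite. This runs for every $r\le\binom{n}{k}-k(n-k)$ and closes your argument; without it (or something equivalent), the proposal does not yet establish nefness of $H-\sum_i E_i$, on which both parts rest.
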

\begin{proof}
Let $\Gamma$ be a set of $r$ general points and $\Gamma' \subset \Gamma$ a set of $r'$ points. If $\alpha$ is an effective $m$-cycle in $\ceff_m(X_{\Gamma'})$ then the strict transform of $\alpha$ in $\ceff_m(X_\Gamma)$ is effective. Thus if the claim holds for $\Gamma$, it holds for $\Gamma'$. It therefore suffices to consider the case $r = \binom{n}{k} - k(n-k)$.

The base locus of the linear system of hyperplanes containing $\Gamma$ is the $(r-1)$-plane $\Lambda$ spanned by $\Gamma$. Since $\dim \Lambda + \dim \grass(k,n) = \binom{n}{k} -1$, the intersection $\Lambda \cap \grass(k,n)$ in $\P^{\binom{n}{k}-1}$ is a finite set. Thus the divisor class $H- \sum_{i=1}^r E_i$ is nef. The claim for $\ceff_1(X_\Gamma)$ follows from Proposition \ref{prop:nef_implies_lingen}. 

Let $\beta$ be a pseudoeffective 2-cycle on $X_\Gamma$. By  Lemma \ref{lemma:positive_coeffs}, we may assume $\beta$ is of the form:
\[
\beta = a_2\sigma_{2}^\ast +a_{1,1}\sigma_{1,1}^\ast - \sum_{i=1}^r b_i E_{i,[2]},
\]
with $b_i \geq 0$ for $0 \leq i \leq r$. Since $H-\sum_{i=1}^r E_i$ is nef, we have
\[
0 \leq \beta \cdot \left(H- \sum_{i=1}^r E_i \right)^2=a_2 + a_{1,1} - \sum_{i=1}^r b_i.
\]
By Lemma \ref{lem:sum_alambda}, $\beta$ is in the span of the Schubert classes.
\end{proof}

 There exist Grassmannians for which $\ceff_1(X_\Gamma)$ is not S-generated for $r > \binom{n}{k} - k(n-k)$. For example, we will show this for $\grass(2,4)$ in Proposition \ref{prop:curves_g24}. However, Proposition \ref{prop:lingen_terriblebound} can typically be improved by one, at least for curves.
\begin{prop}\label{prop:lingen_prettybadbound}
Suppose $k$ and $n$ are such that $d \geq \binom{n}{k} - k(n-k) +1$, where $d$ is the degree of the Grassmannian in its Pl\"ucker embedding. Then $\ceff_1(X_\Gamma)$ is S-generated if
\[
r \leq \binom{n}{k} -k(n-k) + 1.
\]
\end{prop}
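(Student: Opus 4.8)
By Proposition~\ref{prop:nef_implies_lingen} it suffices to prove that the divisor class $H - \sum_{i=1}^r E_i$ is nef on $X_\Gamma$. Following the reduction in the proof of Proposition~\ref{prop:lingen_terriblebound} (the strict transform of an effective curve stays effective when more points are blown up), I would treat only the extremal case $r = \binom{n}{k} - k(n-k) + 1$. Write $N = \binom{n}{k}$ and $g = k(n-k)$, so that $\grass(k,n) \subset \P^{N-1}$ has dimension $g$ and $r = N - g + 1$. The span $\Lambda$ of $\Gamma$ is then a linear space of dimension $r-1 = N-g$. In contrast to Proposition~\ref{prop:lingen_terriblebound}, where $\dim\Lambda + \dim\grass(k,n) = N-1$ forced $\Lambda\cap\grass(k,n)$ to be finite, here $\dim\Lambda + \dim\grass(k,n) = N$, so $B := \Lambda\cap\grass(k,n)$ is a curve, and the base locus of $|H - \sum_i E_i|$ is exactly the strict transform $\widetilde B$ of this curve.

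To verify nefness I would test $H - \sum_i E_i$ against every irreducible curve $\widetilde C \subset X_\Gamma$. If $\widetilde C \subset E_i$, then $\widetilde C$ is a positive multiple of $\ell_i$ and $(H - \sum_j E_j)\cdot \ell_i = 1 > 0$. If $\widetilde C$ is the strict transform of an irreducible curve $C\subset\grass(k,n)$ with $C\not\subset\Lambda$, then not every hyperplane through $\Gamma$ contains $C$, so a general member of $|H - \sum_i E_i|$ (the strict transform of a general hyperplane section through $\Gamma$) meets $\widetilde C$ properly and the intersection is nonnegative. The only remaining curves are the components of $\widetilde B$, for which I must show that $(H - \sum_i E_i)\cdot\widetilde C = \deg C - \sum_i \operatorname{mult}_{p_i} C \geq 0$.

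The heart of the argument is therefore to understand $B$ for general $\Gamma$, and the plan is to show it is a smooth irreducible nondegenerate curve of degree $d$ meeting each $p_i$ as a smooth point. A general linear space $\Lambda_0\subset\P^{N-1}$ of dimension $N-g$ meets $\grass(k,n)$ in a smooth irreducible nondegenerate curve $B_0$ of degree $d$ (Bertini applies since $g \geq 4$). Choosing $r = \dim\Lambda_0 + 1$ general points $q_1,\dots,q_r$ on $B_0$, these lie in linear general position on the nondegenerate curve $B_0$ and hence span $\Lambda_0$, whence $\operatorname{span}(q_\bullet)\cap\grass(k,n) = \Lambda_0\cap\grass(k,n) = B_0$. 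A dimension count shows such configurations are general: the incidence variety of pairs $(\Lambda_0,(q_1,\dots,q_r))$ has dimension $(N-g+1)(g-1) + r = (N-g+1)g = \dim\grass(k,n)^r$, and the projection $(\Lambda_0,q_\bullet)\mapsto(q_1,\dots,q_r)$ has finite general fibers since $\Lambda_0 = \operatorname{span}(q_\bullet)$, so it dominates $\grass(k,n)^r$. Granting this, $\widetilde B$ is the only base curve and, since each $p_i$ is a smooth point of $B$,
\[
\left(H - \sum_{i=1}^r E_i\right)\cdot\widetilde B = \deg B - \sum_{i=1}^r \operatorname{mult}_{p_i} B = d - r \geq 0,
\]
the final inequality being exactly the hypothesis $d \geq \binom{n}{k} - k(n-k) + 1 = r$. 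Together with the previous cases this shows $H - \sum_i E_i$ is nef, and Proposition~\ref{prop:nef_implies_lingen} then gives S-generation.

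I expect the main obstacle to be the claim that $B$ is irreducible for general $\Gamma$: although $\Lambda$ is spanned by points lying on $\grass(k,n)$ rather than by a general linear space, the dimension count above is what certifies that this constrained linear section still behaves like a general one. If irreducibility were to fail, one would instead have to bound $\deg C$ against the number of general points on each component $C$ separately; the rigidity of the line class (Proposition~\ref{prop:lines_through_one_pt}) together with the fact that a nondegenerate degree-$\delta$ curve spans at most a $\delta$-plane only yields $\#\{\text{points on }C\} \le \deg C + 1$, which is slightly too weak, so controlling $B$ globally through irreducibility is the cleaner route.
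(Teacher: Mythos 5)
Your proposal is correct and follows essentially the same route as the paper: both reduce to the extremal case $r = \binom{n}{k}-k(n-k)+1$, observe that the base locus of $|H-\sum_i E_i|$ is the one-dimensional linear section $\Lambda\cap\grass(k,n)$ of degree $d$ through the $r$ points, and conclude nefness (hence S-generation via Proposition~\ref{prop:nef_implies_lingen}) from $d\geq r$. The only difference is one of detail: the paper simply asserts that the curves meeting $H-\sum_i E_i$ negatively have class $d\ell-\sum_{i=1}^r\ell_i$, whereas you justify this with the Bertini argument and the incidence-variety dimension count showing that, for general $\Gamma$, the span of $\Gamma$ cuts $\grass(k,n)$ in a smooth irreducible nondegenerate degree-$d$ curve through the $p_i$.
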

\begin{proof}
Following the proof of Proposition \ref{prop:lingen_terriblebound}, we examine the linear system of hyperplanes containing the points $p_i$. The base locus is the linear space $\Lambda$ spanned by the $p_i$ of $\Gamma$. Taking $r = \binom{n}{k} + k^2 - kn + 1$, we have $\dim(\Lambda \cap \grass(k,n))=1.$ The only curves that can meet the class $H - \sum_{i=1}^r E_i$ negatively are those of class $d\ell - \sum_{i=1}^r \ell_i$. But $d\geq r$ by assumption, so $\ceff_1(X)$ is S-generated by Propositions \ref{lem:sum_alambda} and \ref{prop:nef_implies_lingen}.
\end{proof}

\begin{remark}
The degree of the Pl\"ucker embedding is
\[
d=(k(n-k))! \prod_{i=1}^k \frac{(i-1)!}{(n-k+i-1)!}
\]
(see, for example, \cite{degree-grassmannians}).
\end{remark}

If we weaken the hypotheses of Proposition \ref{prop:lingen_terriblebound} to allow for very general points, we can characterize precisely when the cone of pseudoeffective curves is S-generated.

\begin{prop}\label{prop:picrank1_verygeneral}
Let $X \subsetneq \P^n$ be a nondegenerate irreducible variety of degree $d$ such that $X$ is covered by lines and has Picard rank 1. Let $\pi:X_r\to X$ be the blow-up of $X$ at $r$ very general points. If $r \leq d$ then $\ceff_1(X_r)$ is generated by exceptional classes and classes of proper transforms of lines.
\end{prop}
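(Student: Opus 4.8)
The plan is to reduce the statement to the nefness of the divisor class $H - \sum_{i=1}^r E_i$ on $X_r$, exactly as in Proposition \ref{prop:nef_implies_lingen}. First I would record that since $X$ has Picard rank $1$ and is covered by lines, $N_1(X) = \R\ell$, where $\ell$ is the class of a line normalized so that $\ell \cdot H = 1$; hence $N_1(X_r) = \R\ell \oplus \bigoplus_{i=1}^r \R\ell_i$. The proof of Proposition \ref{prop:nef_implies_lingen} uses only these facts together with Lemmas \ref{lemma:positive_coeffs} and \ref{lem:sum_alambda}, so it applies verbatim: $\ceff_1(X_r)$ is generated by the classes $\ell_i$ and $\ell - \ell_i$ (equivalently, by exceptional classes and proper transforms of lines, which are effective because $X$ is covered by lines) if and only if $H - \sum_{i=1}^r E_i$ is nef. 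Thus the entire content is to prove this nefness when $r \le d$ and the points are very general.

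Next I would translate nefness into a statement about curves on $X$. Since every irreducible curve $C \subset X_r$ either lies in some $E_i$, giving $(H - \sum_j E_j)\cdot C = 1 > 0$, or has class $a\ell - \sum_i m_i \ell_i$ with $a = \deg \pi(C)$ and $m_i = \operatorname{mult}_{p_i}\pi(C)$, nefness is equivalent to the inequality $\sum_{i=1}^r \operatorname{mult}_{p_i}(C') \le \deg C'$ for every curve $C' \subset X$. I would then set up the standard ``very general'' mechanism: for each degree $a$ and each multiplicity vector $(b_i)$ with $\sum_i b_i > a$, the set $Z_{a,(b_i)} \subset X^r$ of configurations admitting an effective $1$-cycle of degree $a$ meeting the points with at least these multiplicities is closed, being the image of a closed condition on the projective Chow variety of degree-$a$ cycles on $X$. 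There are only countably many such pairs $(a,(b_i))$, so a very general configuration avoids $\bigcup Z_{a,(b_i)}$ provided each $Z_{a,(b_i)}$ is a proper subset of $X^r$. Hence it suffices to exhibit, for each such pair, a single configuration carrying no such cycle.

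The key construction uses the degree. I would place the $r$ points on a general line $\Lambda \subset \P^n$: since $X$ is nondegenerate of degree $d$, a general line meets $X$ transversally in $d$ distinct smooth points, and because $r \le d$ I may take $p_1,\dots,p_r$ among these. The crucial elementary fact is that a curve $C'$ of degree $a$ with $\Lambda \not\subset C'$ satisfies $\sum_{p \in \Lambda}\operatorname{mult}_p(C') \le a$; indeed, a general hyperplane $H_0 \supset \Lambda$ cuts $C'$ in a divisor of degree $a$, and at each $p \in \Lambda \cap C'$ the local contribution is at least $\operatorname{mult}_p(C')$. Applying this to each irreducible component of an effective degree-$a$ cycle $Z$ — no component can contain $\Lambda$, since each lies in $X$ whereas $\Lambda \not\subset X$ — yields $\sum_{i=1}^r \operatorname{mult}_{p_i}(Z) \le a$, contradicting $\sum_i b_i > a$. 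Thus this configuration lies in no $Z_{a,(b_i)}$, each such locus is proper, and the very general configuration satisfies the desired inequality, proving $H - \sum_{i=1}^r E_i$ nef.

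I expect the main obstacle to be running the ``very general'' bookkeeping in the correct logical direction: nefness is not a naively closed condition in families, so one must phrase the argument as a countable union of genuinely closed ``bad'' loci and prove each is proper by producing one good configuration, rather than attempting to specialize a nef class. The second delicate point — and the place where the hypothesis $r \le d$ is actually consumed — is the choice of degeneration: the points are taken collinear on a line of the \emph{ambient} $\P^n$ (emphatically not a line of $X$, which would instead make the class fail to be nef), so that the bound arises from intersecting curves of $X$ against this ambient line. Once these two ideas are in place, the remaining verifications are routine.
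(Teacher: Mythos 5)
Your overall architecture is sound and matches the paper's: reduce the statement to nefness of $H - \sum_{i=1}^r E_i$, verify nefness at one special configuration of points lying on a linear space, and propagate to very general configurations by a countability argument (your Chow-variety bookkeeping is a hand-rolled version of the semicontinuity of nefness, \cite[Prop.~1.4.14]{lazarsfeld}, which is exactly what the paper cites). The gap is in the key construction: the assertion that ``a general line meets $X$ transversally in $d$ distinct smooth points'' is true only when $X$ is a hypersurface. If $\operatorname{codim} X \geq 2$, a general line in $\P^n$ misses $X$ entirely; what meets $X$ in exactly $d$ points is a general linear subspace of \emph{complementary} dimension $n - \dim X$. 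Worse, the configuration you need can fail to exist at all: in the motivating case $X = \grass(k,n) \subset \P^{\binom{n}{k}-1}$, which is the case this proposition exists to handle (Corollary \ref{prop:lingen_verygeneral}), $X$ is cut out by the Pl\"ucker quadrics, so any line meeting $X$ in a scheme of length at least $3$ lies on each of those quadrics and hence inside $X$. Thus for $3 \leq r \leq d$ there is no line $\Lambda \not\subset X$ through $r$ points of $X$, and your degeneration has no starting point; since properness of every bad locus $Z_{a,(b_i)}$ rests on exhibiting that one configuration, the argument collapses precisely when $\operatorname{codim} X \geq 2$.

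The repair is immediate and lands you essentially on the paper's proof: take $\Lambda$ to be a general linear subspace of dimension $n - \dim X$, so that $X \cap \Lambda$ consists of $d$ distinct smooth points (the definition of degree plus Bertini in characteristic zero), and take $p_1, \dots, p_r$ among them. Your multiplicity estimate then goes through verbatim: no component of an effective $1$-cycle $Z \subset X$ can lie in $\Lambda$ (because $X \cap \Lambda$ is finite), a general hyperplane $H_0 \supset \Lambda$ contains no component of $Z$ (the intersection of all hyperplanes containing $\Lambda$ is $\Lambda$ itself), and comparing local intersection numbers of $H_0$ with each component gives $\sum_{i=1}^r \operatorname{mult}_{p_i}(Z) \leq \deg Z$. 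This is precisely the configuration the paper uses; the remaining differences are cosmetic --- the paper deduces nefness by observing that the base locus of the hyperplane sections through the $p_i$ is the finite set $X \cap \Lambda$, and it quotes \cite[Prop.~1.4.14]{lazarsfeld} rather than redoing the countability bookkeeping by hand.
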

\begin{proof}
It suffices to consider the case $r=d$. Let $\Lambda$ be a general linear space of dimension $n-m$, and $\{p_1, \dots, p_r\} = X \cap \Lambda$. The linear system of hyperplanes on $X$ containing each $p_i$ has precisely the set $\{p_1, \dots, p_r\}$ as its base locus. Let $X_r$ be the blow-up of $X$ at these points and let $H$ be the effective generator of $N^1(X)$. Then $\pi^\ast H - \sum_{i=1}^r E_i \in \ceff^1(X_r)$ is nef.

Let $\ell \in \ceff_1(X_r)$ be the class of the proper transform of a line not passing through any of the $p_i$. Then $\pi^\ast H \cdot \ell = 1$ and $E_i \cdot \ell = 0$ for $i=1, \dots, r$. Let $\alpha \in \ceff_1(X_r)$ have class $a\ell - \sum_{i=1}^r b_i E_i^{[m-1]}$. Since $H- \sum_{i=1}^r E_i$ is nef we must have $a \geq \sum_{i=1}^r b_i$.
Since $X$ is covered by lines, it follows that $\ceff_1(X_r)$ is generated by the classes of strict transforms of lines and the exceptional classes. By \cite[Prop. 1.4.14]{lazarsfeld}, the same must hold for a very general set of points $p_1, \dots, p_r$.
\end{proof}

The one-dimensional Schubert variety $\Sigma_{n-k, \dots, n-k, n-k-1}$ maps to a line under the Pl\"ucker embedding. We obtain the following corollary.

\begin{cor}\label{prop:lingen_verygeneral} Let $X_r$ be the blow-up of $\grass(k,n)$ at $r$ very general points. Then $\ceff_1(X_r)$ is S-generated if and only if $r \leq d$, where $d$ is the degree of $\grass(k,n)$ in the Pl\"ucker embedding.
\end{cor}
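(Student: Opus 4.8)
\textit{The plan is to} establish the two implications separately: the forward direction, $r\le d \Rightarrow$ S-generated, will follow from Proposition \ref{prop:picrank1_verygeneral}, while the converse, $r>d \Rightarrow$ not S-generated, will follow from a single top self-intersection computation combined with Proposition \ref{prop:nef_implies_lingen}.

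For the forward direction \textit{I would} apply Proposition \ref{prop:picrank1_verygeneral} directly to $X=\grass(k,n)$ embedded in $\P^{\binom{n}{k}-1}$ by the complete linear system $|\sigma_1|$. The hypotheses are all standard: the embedding is nondegenerate because it is complete, and $\grass(k,n)$ is a proper irreducible subvariety of Picard rank $1$ with $\pic\grass(k,n)=\Z\cdot\sigma_1$; it is covered by lines, since for each $k$-plane $W$ one may pick subspaces of dimensions $k-1,k,k+1$ with $\Phi\subset W\subset\Lambda$, giving a one-dimensional Schubert variety through $W$ as in Proposition \ref{prop:lines_through_one_pt}; and it has degree $d$ by definition. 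The proposition then shows that for $r\le d$ very general points $\ceff_1(X_r)$ is generated by the exceptional classes and proper transforms of lines. To upgrade this to S-generation \textit{I would} invoke the rigidity of the one-dimensional Schubert class already used in the proof of Proposition \ref{prop:lines_through_one_pt}: a curve of the Pl\"ucker embedding is a line exactly when it has class $\sigma_{n-k,\dots,n-k,n-k-1}$, and since this class is rigid, every such line is a Schubert variety. Thus the proper transforms of lines coincide with the strict transforms of the one-dimensional Schubert varieties, which are precisely the S-generators.

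For the converse \textit{I would} argue by contraposition: suppose $\ceff_1(X_r)$ is S-generated. By Proposition \ref{prop:nef_implies_lingen} this is equivalent to $H-\sum_{i=1}^r E_i$ being nef, and a nef divisor on a projective variety has nonnegative top self-intersection. Writing $g=k(n-k)=\dim X_r$ and using $H\cdot E_i=0$ and $E_i\cdot E_j=0$ for $i\ne j$, every mixed monomial vanishes and only pure powers survive:
\[
\Big(H-\sum_{i=1}^r E_i\Big)^{g} = H^{g}+\sum_{i=1}^r(-E_i)^{g}.
\]
Here $H^{g}=\sigma_1^{k(n-k)}=d$ is the Pl\"ucker degree, while $(-E_i)^{g}=(-1)^{k(n-k)}E_i^{k(n-k)}=(-1)^{k(n-k)}(-1)^{k(n-k)+1}=-1$. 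Hence the self-intersection equals $d-r$, and nefness forces $d-r\ge 0$, i.e. $r\le d$.

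Neither implication presents a real computational difficulty, so the points needing care are conceptual: the verification of the hypotheses of Proposition \ref{prop:picrank1_verygeneral}, and in particular the rigidity identification of the lines of the Pl\"ucker embedding with the one-dimensional Schubert varieties, which is exactly what converts the proposition's conclusion into S-generation. \textit{I expect} this rigidity identification to be the subtlest step, although it is already available from the discussion around Proposition \ref{prop:lines_through_one_pt}. It is worth noting that the converse in fact holds for an \emph{arbitrary} configuration of $r$ points, since the number $d-r$ does not depend on their position; the very general hypothesis is needed only for the forward implication.
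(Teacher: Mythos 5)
Your core argument is correct and is essentially the paper's own proof: the forward implication is exactly Proposition \ref{prop:picrank1_verygeneral} (with the identification of lines of the Pl\"ucker embedding with one-dimensional Schubert varieties supplied by the rigidity discussion in Proposition \ref{prop:lines_through_one_pt}), and the converse is the top self-intersection computation $(H-\sum_{i=1}^r E_i)^{k(n-k)} = d-r$. The paper phrases the converse in the other direction --- $r>d$ implies the class has negative top self-intersection, hence is not nef, hence meets some pseudoeffective curve $a\ell-\sum b_i\ell_i$ negatively, hence $a<\sum b_i$ and the cone is not S-generated --- which is just your contrapositive unwound; your version, citing Proposition \ref{prop:nef_implies_lingen} directly, is if anything slightly cleaner.

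However, your closing remark --- that the converse holds for an \emph{arbitrary} configuration of $r$ points because $d-r$ does not depend on their position --- is false, and it contradicts Proposition \ref{prop:points_on_a_line} of the paper: if $\Gamma$ consists of $r$ points on a line $L=\Sigma_{n-k,\dots,n-k,n-k-1}$, then $\ceff_1(X_\Gamma)$ is S-generated for \emph{every} $r$, in particular for $r>d$. The flaw is in applying Proposition \ref{prop:nef_implies_lingen} to arbitrary points: the definition of S-generation allows strict transforms of Schubert varieties passing through \emph{several} points of $\Gamma$, so when a one-dimensional Schubert variety contains $s\geq 2$ of the points, its strict transform $\ell-\sum_{i\in S}\ell_i$ is itself an S-generator, and it meets $H-\sum_{i=1}^r E_i$ in $1-s<0$. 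Thus S-generation does not imply nefness of $H-\sum_{i=1}^r E_i$ in general; the ``only if'' direction of Proposition \ref{prop:nef_implies_lingen} is valid only when no line contains two points of $\Gamma$, which Proposition \ref{prop:lines_through_one_pt} guarantees for (very) general points but which fails badly for special configurations. So the positional hypothesis is needed for both implications of the corollary, not only the forward one.
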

\begin{proof}
Since $\grass(k,n)$ has Picard rank 1 and is covered by lines, Proposition \ref{prop:picrank1_verygeneral} shows that $\ceff_1(X_r)$ is S-generated for $r \leq d$. Conversely, suppose $r>d$. Then the linear system $H - \sum_{i=1}^r E_i$ satisfies $(H - \sum_{i=1}^r E_i)^m<0$, where $m=k(n-k) = \dim \grass(k,n)$. In particular, it is not nef. It therefore has negative intersection with some curve class $\alpha = a\ell - \sum_{i=1}^r b_i \ell_i$. The intersection number is $a - \sum b_i$, thus $\ceff_1(X_r)$ cannot be S-generated.
\end{proof}

\section{Examples: $\grass(2,4)$ and $\grass(2,5)$}\label{sec:examples}
Our first example is the pseudoeffective cone of blow-ups of the Grassmannian $\grass(2,4)$ of lines in $\P^3$. We will continue to denote by $\ell$ the class of the strict transform of the Schubert variety $\Sigma_{2,1}$, by $E_i$ the exceptional divisors, and by $\ell_i$ the one-dimensional linear classes contained in them. Let $r$ be a non-negative integer. If $r\leq 2$, then Theorem \ref{thm:divisors_2pts} implies that $\ceff^1(X_\Gamma)$ is S-generated and Proposition \ref{prop:lingen_terriblebound} implies the same for $\ceff_1(X_\Gamma)$ and $\ceff_2(X_\Gamma)$. Under the Pl\"ucker embedding, $\grass(2,4)$ is a quadric hypersurface in $\P^5$. A calculation for quadric hypersurfaces allows us to compute the cone of pseudoeffective curves on blow-ups of $\grass(2,4)$. The following lemma was shown independently in \cite{rischter}.

\begin{lemma}\label{lemma:curves_on_quadrics}
Let $Q \subset \P^{n+1}$ be a quadric hypersurface of dimension $n \geq 3$. Let $\Gamma$ be a set of $r$ general points on $Q$ and $Q_\Gamma$ the blow-up of $Q$ along $\Gamma$. We have the following:
\begin{enumerate}
	\item If $0 \leq r \leq 2$ then $\ceff_1(Q_\Gamma)$ is generated by the exceptional classes and the classes of strict transforms of lines on $Q$.
	\item If $3 \leq r \leq 6$ then $\ceff_1(Q_\Gamma)$ is generated by the exceptional classes and the classes of strict transforms of lines and conics on $Q$.
\end{enumerate}
\end{lemma}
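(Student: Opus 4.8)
The plan is to realize $\ceff_1(Q_\Gamma)$ as the dual of a nef cone that can be written down explicitly. I work in $N_1(Q_\Gamma) = \langle \ell, \ell_1, \dots, \ell_r\rangle$ and its dual $N^1(Q_\Gamma) = \langle H, E_1, \dots, E_r\rangle$, with $H \cdot \ell = 1$, $H \cdot \ell_i = 0$, and $E_j \cdot \ell_i = -\delta_{ij}$. By Lemma \ref{lemma:positive_coeffs} every effective curve is either some $\ell_i$ or has the form $a\ell - \sum_i b_i \ell_i$ with $a > 0$ and each $b_i \geq 0$ equal to the multiplicity at $p_i$ of the image curve. Let $C$ denote the cone generated by the classes in the statement; since $C$ is finitely generated it is closed and convex, so $C=(C^\vee)^\vee$, and it suffices to prove the two inclusions $C \subseteq \ceff_1(Q_\Gamma)$ and $\ceff_1(Q_\Gamma)\subseteq C$.

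The inclusion $C \subseteq \ceff_1(Q_\Gamma)$ amounts to exhibiting the generators as effective classes. The $\ell_i$ are lines in the exceptional divisors. Since $Q$ is covered by lines and the lines of $Q$ through a fixed point $p$ sweep out the cone $Q \cap T_pQ$ (nonempty for $n \geq 2$), both $\ell$ and $\ell - \ell_i$ are strict transforms of lines meeting $\Gamma$ in at most one point. For part (2), given three of the points $p_i,p_j,p_k$, their span $P \cong \P^2$ is not contained in $Q$ for general points, so $P \cap Q$ is a conic through the three points whose strict transform has class $2\ell - \ell_i - \ell_j - \ell_k$. Because no line of $Q$ passes through two general points, these conic classes are genuinely new extremal rays rather than positive combinations of line classes, which explains why conics must be included precisely once $r \geq 3$.

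For the reverse inclusion I aim to show that every divisor in $C^\vee$ is nef, whence $\ceff_1(Q_\Gamma) \subseteq (C^\vee)^\vee = C$. Writing $D = xH - \sum_i y_i E_i$ and pairing against the generators of $C$ shows that $C^\vee$ is cut out by $y_i \geq 0$, $x \geq y_i$ for all $i$, and, when $r \geq 3$, by $2x \geq y_i + y_j + y_k$ for every triple. A short computation then identifies the extreme rays of $C^\vee$: the classes $H$, $H - E_i$, and $H - E_i - E_j$ arising from hyperplanes through at most two of the points, together with (for $r \geq 3$) the cubic classes $3H - 2\sum_{i \in S} E_i$ with $3 \leq |S| \leq r$. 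Nefness of the hyperplane classes is immediate, since the base locus on $Q$ of hyperplanes through one or two of the $p_i$ is exactly those points (the line spanned by two general points meets $Q$ only in them), so $H - E_i$ and $H - E_i - E_j$ are base-point free on $Q_\Gamma$. This already settles part (1), where no triples occur and these are the only extreme rays.

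The heart of the matter, and the main obstacle, is the nefness of the cubic classes $3H - 2\sum_{i \in S} E_i$. The plan is to show that for $r \leq 6$ general points the linear system of cubic hypersurfaces in $\P^{n+1}$ singular at the points of $S$ is nonempty, imposes independent conditions, and has base locus on $Q$ equal to $S$, so that $3H - 2\sum_{i \in S} E_i$ is again base-point free, hence nef; equivalently, no effective curve $a\ell - \sum b_i \ell_i$ satisfies $3a < 2\sum_{i \in S} b_i$. This is exactly where the bound $r \leq 6$ enters and is sharp. For $n=3$ the extreme class $3H - 2\sum_{i=1}^r E_i$ is the anticanonical class $-K_{Q^3_\Gamma}$, and the relevant dimension count for cubics in $\P^4$ with assigned double points reads $34 - 5r$, which is nonnegative at $r=6$ but negative at $r=7$; correspondingly $(-K)^3 = 54 - 8r$ is $6$ at $r=6$ but $-2<0$ at $r=7$, so $-K$ fails to be nef and its negativity produces an effective curve lying in $C^\vee{}^\vee\setminus C$, i.e.\ outside the lines-and-conics cone. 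Once the finitely many extreme rays of $C^\vee$ are shown nef for $r \leq 6$, the two inclusions combine to give $\ceff_1(Q_\Gamma) = C$, proving both parts. The technical input still to be supplied is the base-point-freeness (equivalently, the interpolation statement for singular cubics) for each $S$ with $3 \leq |S| \leq 6$, which I expect to verify by degenerating the points to a convenient configuration and making the dimension count explicit.
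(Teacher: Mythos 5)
Your setup for part (1) and the effectivity inclusion $C \subseteq \ceff_1(Q_\Gamma)$ are fine (and for part (1) your argument is essentially the paper's), but the dual-cone step in part (2) contains a genuine error that breaks the whole strategy. The cone $C^\vee$ cut out by $y_i \geq 0$, $x \geq y_i$, and $2x \geq y_i + y_j + y_k$ is \emph{not} generated by the classes you list. For $r = 6$, the class $2H - 2E_1 - E_2 - E_3 - E_4 - E_5$ lies in $C^\vee$ (every triple of the $y$'s sums to at most $4 = 2x$) and is an extreme ray: the active constraints $x = y_1$, $y_6 = 0$, and $2x = y_1 + y_i + y_j$ for all pairs $\{i,j\} \subset \{2,3,4,5\}$ force $y_2 = \cdots = y_5 = x/2$, so they cut out a single ray. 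An extreme ray of $C^\vee$ cannot be a positive combination of other elements of $C^\vee$, in particular not of your generators; the same phenomenon occurs for every $r \geq 4$ (e.g.\ $2H - 2E_1 - E_2 - E_3 - E_4$ when $r = 4$). Conversely, your $|S| = 3$ cubic classes are not extreme at all, since $3H - 2E_i - 2E_j - 2E_k = (H - E_i - E_j) + (H - E_i - E_k) + (H - E_j - E_k)$. Because your argument is ``prove every generator of $C^\vee$ is nef, then dualize,'' an incomplete generating set is fatal: nefness of the classes on your list only shows that the subcone they span is nef, which does not give $C^\vee \subseteq \ceff_1(Q_\Gamma)^\vee$ and hence does not give $\ceff_1(Q_\Gamma) \subseteq C$. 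Each missing generator would require its own nontrivial nefness proof, on top of the singular-cubic interpolation statement you already defer.

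This last point is exactly where the paper takes a different, and easier, route: it never proves nefness of any such class. Instead it uses the divisors $2H - 2E_i - 2E_j - \sum_{k \neq i,j} E_k$, which are \emph{not} nef (they meet the conic $2\ell - \ell_i - \ell_j - \ell_k$ in $-1$), but whose base locus is computed explicitly: projecting from $p_i, p_j$ and taking cones over quadrics in $\P^{n-1}$ through the remaining points (here $r \leq 6$ and $n \geq 3$ enter) shows the base locus is the union of the planes $\overline{p_ip_jp_k}$, whose intersections with $Q$ are precisely the conics. Hence every irreducible curve either is such a conic or satisfies $2a \geq 2b_i + 2b_j + \sum_{k \neq i,j} b_k$, and an elementary arithmetic argument (subtract $\left\lfloor a/2 \right\rfloor$ conics, then use this inequality and $r \leq 6$ to show the residual class is a nonnegative combination of lines and exceptional classes) finishes the proof. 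To salvage your framework you would need both to enumerate the extreme rays of $C^\vee$ correctly and to establish nefness of each of them, including the interpolation problem for cubics singular along $S$ that you postponed; the paper's base-locus argument bypasses both.
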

\begin{proof}
Since $n\geq 3$, $\pic Q$ has rank 1, thus $N_1(Q)$ and $N^1(Q)$ are 1-dimensional and the generator of $N_1(Q)$ is the class of a line. Let $H \in N^1(\P^{n+1})$ be the class of a hyperplane. Then in the blow-up $\P^{n+1}_\Gamma$ of $\P^{n+1}$ along $\Gamma$, the base locus of the class $\pi^\ast H - E_1 - E_2$ is the line spanned by the two points of $\Gamma$. In particular, its restriction to $Q_\Gamma$ is nef. Let $\ell\in N_1(Q_\Gamma)$ be the pullback of the class of a line and $\ell_i \in N_1(Q)$ the class of a line contained in the exceptional divisor $E_i$. Then if $a\ell - b_1\ell_1 - b_2\ell_2$ is effective, we must have $a \geq b_1 + b_2$. Since $Q$ is covered by lines, the classes $\ell - \ell_i$ are effective. Since $a \geq b_1 + b_2$, the classes $\ell-\ell_i$ and $\ell_i$ must generate the effective cone.

Suppose $3 \leq r \leq 6$. We project $Q \subset \P^{n+1} \to \P^{n-1}$ from two points $p_i$ and $p_j$ of $\Gamma$ and take cones over quadric surfaces containing the remaining $p_k$. Note that the vector space of quadric hypersurfaces in $\P^{n-1}$ has dimension $\frac{1}{2}n(n+1)$. Since $n \geq 3$ and $r \leq 6$, the base locus of quadrics $Q' \subset \P^{n-1}$ containing the $p_k$ is just the set of the $p_k$. By taking cones over these $Q'$, singular along the line $\overline{p_ip_j}$, we obtain quadrics in $\P^{n+1}$ with double points at $p_i$ and $p_j$. The base locus of such quadrics is thus the set of planes $\overline{p_ip_jp_k}$ as $k$ varies.

The intersection of such a plane with $Q$ is a conic whose strict transform in $Q_\Gamma$ has class $2\ell - \ell_i - \ell_j - \ell_k$. If $C \in N_1(Q_\Gamma)$ has class $a\ell - \sum_{i=1}^6 b_i\ell_i$ and $C$ is not contained in one of these planes, then $C$ has proper intersection with the divisor $2\pi^\ast H - 2E_i-2E_j - \sum_{k \neq i,j} E_k$. That is, $C$ either contains a conic of the form $2\ell - \ell_i - \ell_j - \ell_k$ or it satisfies the inequality

\begin{equation}\label{eqn:inequality_b}
	2a \geq 2b_i + 2b_j + \sum_{k\neq i,j} b_k.
\end{equation}

After subtracting off components of $C$ that are conics, we may assume (\ref{eqn:inequality_b}) holds for all $i,j$. Suppose $b_1$ and $b_2$ are maximal among the coefficients of the $\ell_k$ in the expression of $C$. Fix a choice of indices $i,j,k$. We subtract $2\ell - \ell_i - \ell_j - \ell_k$ from $C$ a total of $\left\lfloor \frac{a}{2} \right\rfloor$ times to obtain a class $D = a'\ell - \sum b_i' \ell_i$. We claim $D$ is in the span of strict transforms of lines. Assuming this, $C$ is then the sum of $D$ and conics, proving the claim. Suppose first that $a$ is even. Then
\[
\sum_{i=1}^r b_i' = \sum_{i=1}^r b_i - \frac{3a}{2} = 2b_1 + 2b_2 + \sum_{i=3}^r b_i - \frac{3a}{2} - b_1 -b_2.
\]
It suffices to show that $\sum b_i' \leq 0$. Applying (\ref{eqn:inequality_b}), we have
\[
\sum_{i=1}^r b_i' \leq 2a - \frac{3a}{2} - b_1 - b_2.
\]
We may assume that $b_1$ and $b_2$ are as small as possible. Since they are assumed to be maximal among the $b_i$, it suffices to prove the claim when $b_1 = b_2 = \cdots = b_r$. In this case, $2a \geq (r+2)b_1$ by (\ref{eqn:inequality_b}), whence:
\[
2a - \frac{3a}{2} - b_1 - b_2 \leq 2a - \frac{3a}{2} - \frac{4a}{r+2} = \frac{a(r-6)}{2(r+2)}.
\]
Since $r \leq 6$, the it follows that $\sum b_i' \leq 0$.

If $a$ is odd then $\left\lfloor \frac{a}{2} \right\rfloor = \frac{a-1}{2}$ and it suffices to show that $\sum b_i' \leq 1$. We have:
\[
\sum_{i=1}^r b_i' \leq 2a - \left(\frac{3a-3}{2}\right) - b_1 - b_2.
\]
Again we may assume $b_1 = b_2 = \cdots = b_r$, so that
\[
2a - \left(\frac{3a-3}{2}\right) - b_1 - b_2 \leq \frac{a+3}{2} - \frac{4a}{r+2} = \frac{a(r-6) + 3(r+2)}{2(r+2)} < 2.
\]
Since $\sum b_i'$ is an integer, it follows that $\sum b_i' \leq 1$.
\end{proof}

\begin{prop}\label{prop:curves_g24}
Let $\Gamma$ be a set of $r$ general points on $\grass(2,4)$, and $X_\Gamma$ the blow-up of $\grass(2,4)$ along $\Gamma$.
\begin{enumerate}
	\item For $r=1$ and $r=2$, $\ceff_1(X_\Gamma)$ is S-generated.
	\item For $3 \leq r \leq 7$, $\ceff_1(X_\Gamma)$ is generated by the Schubert classes, the exceptional classes, and by conics of the form $2\ell - \ell_i - \ell_j - \ell_j$, for $i,j,k$ distinct.
\end{enumerate}
\end{prop}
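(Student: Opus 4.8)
The plan is to reduce to cases and then treat the single genuinely new case $r=7$ by a duality argument. Throughout I identify $X=\grass(2,4)$ with a smooth quadric fourfold $Q\subset\P^5$ via the Pl\"ucker embedding. Under this identification the one-dimensional Schubert class $\ell=\sigma_{2,1}$ is the class of a line on $Q$, so the proper transforms of lines through the $p_i$ and of lines in the exceptional divisors give the classes $\ell-\ell_i$ and $\ell_i$, while a plane section of $Q$ through three of the points has class $2\ell-\ell_i-\ell_j-\ell_k$.

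For part (1), since $\binom{4}{2}-2\cdot 2=2$, the case $r\le 2$ is exactly Proposition \ref{prop:lingen_terriblebound}(1). For part (2) with $3\le r\le 6$ I would apply Lemma \ref{lemma:curves_on_quadrics}(2) to $Q$, a quadric of dimension $4\ge 3$: its conclusion is precisely that $\ceff_1(X_\Gamma)$ is generated by the classes $\ell_i$, $\ell-\ell_i$, and the conic classes $2\ell-\ell_i-\ell_j-\ell_k$. Thus only $r=7$ remains, and this is the genuinely new content, because the counting inequality in the proof of Lemma \ref{lemma:curves_on_quadrics} degenerates at $r=7$ (the bound there is $\tfrac{a(r-6)}{2(r+2)}$, which is positive once $r=7$).

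For $r=7$ I would argue by duality. Let $K\subseteq N_1(X_\Gamma)$ be the closed cone generated by the classes $\ell_i$, $\ell-\ell_i$, and the conic classes; all of these are effective (Lemma \ref{lemma:positive_coeffs} together with the plane-section construction used in Lemma \ref{lemma:curves_on_quadrics}), so $K\subseteq\ceff_1(X_\Gamma)$. To prove equality it suffices to show that the dual cone $K^\vee$ is contained in the nef cone: for then every effective class, pairing nonnegatively with every nef class, lies in $(K^\vee)^\vee=K$. Using $H\cdot\ell=1$, $E_i\cdot\ell_i=-1$ and the standard intersection formulas, a divisor class $cH-\sum_i d_iE_i$ lies in $K^\vee$ exactly when $d_i\ge 0$, $d_i\le c$, and $d_i+d_j+d_k\le 2c$ for every triple $i,j,k$. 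Enumerating the vertices of the polytope $\{d\in[0,1]^7 : d_i+d_j+d_k\le 2\}$, a finite computation, shows that the extremal rays of $K^\vee$ are represented by $H$, $H-E_i$, $H-E_i-E_j$, $2H-2E_i-\sum_{j\ne i}E_j$, and $3H-2\sum_{i=1}^7 E_i$.

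It then remains to show each of these five classes is nef, which I would do by realizing it as the proper transform of a linear system of hypersurfaces in $\P^5$ cut out on $Q$ and checking base-point-freeness. The first three are immediate: hyperplanes through one or two of the $p_i$ have base locus a point or a secant line, and a secant meets $Q$ only in the chosen points, so these systems become base-point-free after blowing up. For $2H-2E_i-\sum_{j\ne i}E_j$, project $Q$ from $p_i$ to $\P^4$; quadric cones with vertex $p_i$ through the remaining six points correspond to quadrics through six general points of $\P^4$, whose base locus is exactly those six points, giving base-point-freeness on $X_\Gamma$. The crux is the last class $3H-2\sum_{i=1}^7 E_i$, the system of cubic hypersurfaces in $\P^5$ with double points at all seven $p_i$: here the main obstacle is the underlying interpolation problem. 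I would invoke the Alexander--Hirschowitz theorem, noting that $(d,n,s)=(3,5,7)$ is not among the defective cases, to conclude that the seven double points impose independent conditions and the system has the expected dimension ($56-7\cdot 6=14$); a base-locus analysis, again using generality of the points and that secants meet $Q$ only in the points, then identifies the base locus with the seven fat points, so the class is base-point-free on $X_\Gamma$ and hence nef. Establishing this nef-ness of the cubic system is where essentially all the difficulty of the case $r=7$ is concentrated; the vertex enumeration of the previous paragraph and the nef-ness of the other four classes are routine by comparison.
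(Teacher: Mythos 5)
Your reduction of parts (1) and the range $3\le r\le 6$ to Proposition \ref{prop:lingen_terriblebound} and Lemma \ref{lemma:curves_on_quadrics} matches the paper, and your duality framework for $r=7$ (show $K^\vee$ is contained in the nef cone, then dualize) is sound in principle and genuinely different from the paper's argument, which instead intersects curve classes with the divisors $H-E_{i_1}-\cdots-E_{i_5}$, handles curves lying in the resulting quadric threefolds via Lemma \ref{lemma:curves_on_quadrics}, and finishes with a purely numerical subtraction argument. However, your enumeration of the extremal rays of $K^\vee$ is incomplete. For example $d=(\tfrac23,\dots,\tfrac23,0)$ (six entries $\tfrac23$) is a vertex of your polytope: the constraint $d_7=0$ together with the $\binom{6}{3}=20$ tight triple constraints among the first six coordinates have rank $7$. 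More generally $3H-2\sum_{i\in S}E_i$ is extremal for every $S$ with $4\le |S|\le 7$, and $2H-2E_i-\sum_{j\in F}E_j$ for every $F$ with $3\le |F|\le 6$, not just the two extreme cases you list. This particular gap is repairable: for classes of the form $cH-\sum m_iE_i$ with $m_i\ge 0$ on a point blow-up, deleting some of the terms $-m_jE_j$ preserves nefness (curves inside an exceptional divisor meet the new class nonnegatively, and for any other irreducible curve the intersection number only increases), so nefness of your five listed classes would imply nefness of all the missing rays.

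The serious gap is the nefness of $3H-2\sum_{i=1}^7E_i$, which you correctly identify as the crux but do not establish. Your claim that a base-locus analysis ``identifies the base locus with the seven fat points'' is false: by Bezout, any cubic hypersurface with double points at $p_i$ and $p_j$ meets the secant line $\overline{p_ip_j}$ with multiplicity at least $4>3$ and hence contains it, so the base locus of your system contains all $\binom{7}{2}=21$ secant lines. The Alexander--Hirschowitz theorem (and $(3,5,7)$ indeed being a non-defective triple) controls only the dimension of the system; it says nothing about its base locus, which is what nefness requires. What you actually need is that the base locus meets $Q=\grass(2,4)$ in a finite set: the $21$ lines are harmless, since a secant line through two general points of $Q$ meets $Q$ only at those two points, but you must rule out any further positive-dimensional component of the base locus whose intersection with $Q$ is a curve, and you give no argument for this. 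Until that is supplied, the case $r=7$ --- the only genuinely new content of the proposition --- is unproved. Note that the paper's route avoids interpolation entirely: a pseudoeffective class either lies in one of the threefolds $Y=\langle p_{i_1},\dots,p_{i_5}\rangle\cap\grass(2,4)$, where Lemma \ref{lemma:curves_on_quadrics} applies, or satisfies $a\ge b_{i_1}+\cdots+b_{i_5}$ for every five-element subset, and the ordering $b_1\ge\cdots\ge b_7$ plus subtraction of $\lfloor a/2\rfloor$ conics then yields the conclusion by elementary inequalities.
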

\begin{proof}
By Lemma \ref{lemma:curves_on_quadrics}, it suffices to resolve the case $r=7$. Consider the linear system $H - E_1 - E_2 - E_3-E_4 - E_5$ of hyperplanes through five of the points $p_1, \dots, p_5$. Its base locus is the intersection of the $\P^4$ they span with $\grass(2,4)$, hence a quadric 3-fold $Y$. By Lemma \ref{lemma:curves_on_quadrics}, the pseudoeffective cone $\ceff_1(Y_5)$ of the blow-up of $Y$ at the five points $p_1, \dots, p_5$ is generated by lines and conics. Therefore on $X_\Gamma$ a pseudoeffective curve class either lies on a quadric threefold and is therefore in the span of lines and conics, or it meets the class $H - \sum_{j=1}^5 E_{i_j}$ positively and therefore satisfies the following inequality:
\begin{equation*}\label{eqn:inequality_c}
 a\geq b_{i_1} + \cdots + b_{i_5}.
\end{equation*}
Let $\alpha$ be a pseudoeffective cycle and subtract $2\ell - \ell_i - \ell_j - \ell_k$ from $\alpha$ a total of $\left\lfloor \frac{a}{2} \right\rfloor$ times. If $a$ is even, it suffices to show that $\sum_{i=1}^7 b_i -\frac{3a}{2}\leq 0$. Without loss of generality, assume $b_1 \geq b_2 \geq \cdots \geq b_7.$ By the above inequality, we have:
\[
\sum_{i=1}^7 b_i - \frac{3a}{2} \leq b_6 + b_7 - \frac{a}{2}.
\]
Suppose now that $b_6 + b_7 > \frac{a}{2}$. Since $(b_6+b_7) + b_1 + b_2 + b_3 \leq a$, it follows that at least one of $b_1$, $b_2$ and $b_3$ is at most $\frac{a}{6}$. But $b_6$ is at least $\frac{a}{4}$, contradicting its  minimality. Thus $\sum_{i=1}^7 b_i - \frac{3a}{2} \leq 0$.

Similarly, if $a$ is odd it suffices to show that $\sum_{i=1}^7 b_i - \frac{3(a-1)}{2} \leq 1$. If not, then $b_6 + b_7 > \frac{a-1}{2}$. Again, we must have $b_i \leq \frac{a}{6}$ for some $i \neq 6,7$, but $b_6 \geq \frac{a-1}{4}$. This is impossible if $a \geq 1$, so we again arrive at a contradiction.
\end{proof}

Proposition \ref{prop:lingen_terriblebound} implies that if $\Gamma$ consists of at most two general points, then $\ceff_2(X_\Gamma)$ is S-generated. Using the rigidity of the 2-dimensional Schubert classes, we show below that the converse is also true. Note that Theorem \ref{thm:divisors_2pts} implies $\ceff^1(X_\Gamma)$ is S-generated when $r=2$ and so we have that $\ceff^m(X_\Gamma)$ is S-generated for $r \leq 2$ for all $1 \leq m \leq 3.$

\begin{prop}\label{prop:planes_g24}
Let $X_\Gamma$ be the blow-up of $\grass(2,4)$ at a set $\Gamma$ of $r$ general points. Then the pseudoeffective cone $\ceff_2(X_\Gamma)$ is S-generated if and only if $r \leq 2$.
\end{prop}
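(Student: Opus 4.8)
The plan is to prove both implications, the forward one being immediate and the reverse one requiring an explicit witness. For the implication $r\le 2 \Rightarrow$ S-generated, I simply invoke Proposition \ref{prop:lingen_terriblebound}: on $\grass(2,4)$ the bound $\binom{n}{k}-k(n-k)$ equals $\binom{4}{2}-2\cdot 2=2$, so the proposition already gives S-generation of $\ceff_2(X_\Gamma)$ for $r\le 2$. The content is therefore in the converse: for $r\ge 3$ I must exhibit a pseudoeffective $2$-cycle lying outside the S-generated cone. The strategy is to (i) pin down the full list of S-generators using rigidity, thereby producing a linear functional nonnegative on that cone, and then (ii) construct a genuine surface on which this functional is negative.

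First I would identify the S-generators precisely. The dimension-$2$ Schubert classes on $\grass(2,4)$ are $\sigma_2$ and $\sigma_{1,1}$, and under the Pl\"ucker embedding $\grass(2,4)=Q\subset\P^5$ is a smooth quadric fourfold. Any variety of class $\sigma_2$ or $\sigma_{1,1}$ has Pl\"ucker degree $\sigma_\lambda\cdot(\sigma_2+\sigma_{1,1})=1$, hence is a plane $\P^2\subset Q$; by the rigidity of these classes (see \cite{coskun-rigidity}), every such plane is a Schubert variety, and the two rulings of $Q$ are exactly the $\Sigma_2$'s and $\Sigma_{1,1}$'s. Since a $\P^2\subset Q$ containing two points would contain their secant line, which for two general points of $Q$ is not contained in $Q$, each such plane passes through at most one general point of $\Gamma$, and does so at a smooth point (as $\P^2$ is smooth), i.e. with multiplicity $1$. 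Consequently the S-generators of $\ceff_2(X_\Gamma)$ are precisely the exceptional classes $E_{i,[2]}$, the classes $\sigma_2,\sigma_{1,1}$, and the strict transforms $\sigma_2-E_{i,[2]}$ and $\sigma_{1,1}-E_{i,[2]}$.

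Next I would introduce the functional $\phi$ on $N_2(X_\Gamma)$ defined by
\[
\phi\!\left(a_2\sigma_2+a_{1,1}\sigma_{1,1}-\textstyle\sum_i b_i E_{i,[2]}\right)=a_2+a_{1,1}-\sum_i b_i,
\]
which is exactly the quantity controlled by Lemma \ref{lem:sum_alambda}. A direct check gives $\phi(E_{i,[2]})=1$, $\phi(\sigma_2)=\phi(\sigma_{1,1})=1$, and $\phi(\sigma_2-E_{i,[2]})=\phi(\sigma_{1,1}-E_{i,[2]})=0$, so $\phi\ge 0$ on every S-generator and hence on the whole S-generated cone. To contradict S-generation I then construct, for any $r\ge 3$, a surface through three of the points: let $\Lambda\cong\P^3$ be a general linear subspace of $\P^5$ containing $p_1,p_2,p_3$ and set $S=Q\cap\Lambda$. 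This is a quadric surface of class $H^2=\sigma_2+\sigma_{1,1}$ meeting each $p_i$ at a smooth point, so by Lemma \ref{lemma:positive_coeffs} its strict transform has class $\sigma_2+\sigma_{1,1}-E_{1,[2]}-E_{2,[2]}-E_{3,[2]}$. Since $\phi([S])=1+1-3=-1<0$, the effective class $[S]$ is not S-generated, completing the proof.

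The main obstacle, and the step that genuinely uses the hypotheses, is the determination of the S-generators in the second paragraph: everything hinges on the rigidity of $\sigma_2$ and $\sigma_{1,1}$ together with the geometric fact that a plane in the quadric meets at most one general point. Once the generating rays are known to be exactly those five families, the separating functional $\phi$ and the witness surface $S$ are routine, and the argument works uniformly for all $r\ge 3$ because the extra exceptional classes $E_{j,[2]}$ with $j>3$ only increase $\phi$ and so cannot help express $[S]$.
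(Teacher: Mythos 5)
Your proof is correct and follows essentially the same route as the paper's: Proposition \ref{prop:lingen_terriblebound} for $r\le 2$, and for $r\ge 3$ the quadric surface $\Lambda\cap\grass(2,4)$ of class $\sigma_2+\sigma_{1,1}-E_{1,[2]}-E_{2,[2]}-E_{3,[2]}$ as the witness, ruled out of the Schubert span via rigidity of $\sigma_2$ and $\sigma_{1,1}$ and the fact that varieties of these classes pass through at most one general point. Your explicit separating functional $\phi$ merely formalizes the final step the paper leaves implicit, and your secant-line argument is an equivalent variant of the paper's description of $\Sigma_2$ and $\Sigma_{1,1}$.
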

\begin{proof}
That $\ceff_2(X_\Gamma)$ is S-generated for $r\leq 2$ follows immediately from Proposition \ref{prop:lingen_terriblebound}. Conversely, let $\Gamma$ be a set of three general points on $\grass(2,4)$. If $\Lambda\subset \P^5$ is a general 3-plane containing $\Gamma$, then $\Lambda \cap \grass(2,4)$ is a quadric surface of class $\sigma_{1,1} + \sigma_2$. Thus the class $\sigma_{1,1} + \sigma_2 - E_{1,[2]} - E_{2,[2]}- E_{3,[2]}$ is effective. A Schubert variety of class $\sigma_{1,1}$ or $\sigma_2$ can pass through only one general point. Indeed, $\sigma_{1,1}$ corresponds to the class in $\grass(2,4)$ of planes contained in a fixed 3-dimensional vector space and $\sigma_{2}$ corresponds to the class of planes containing a fixed line. Further, the classes $\sigma_{1,1}$ and $\sigma_2$ are rigid by \cite{coskun-rigidity}, thus any variety of such a class can pass through only one general point. Thus $\sigma_{1,1} + \sigma_2 - E_{1,[2]} - E_{2,[2]} - E_{3,[2]}$ cannot be in the span of Schubert classes.
\end{proof}

Our next example is the Grassmannian $\grass(2,5)$ of lines in $\P^4$. Under the Pl\"ucker embedding, $\grass(2,5)$ is a quintic 6-fold in $\P^9$. Let $\Gamma$ be a set of $r$ points on $\grass(2,5)$ and $X_\Gamma$ the blow-up of $\grass(2,5)$ along $\Gamma$. By Corollary \ref{cor:lin_gen1pt}, $\ceff^1(X_\Gamma)$ is S-generated if $r = 1$. By Propositions \ref{prop:lingen_prettybadbound} and \ref{prop:lingen_terriblebound}, $\ceff_1(X_\Gamma)$ is S-generated for $r \leq 5$ and $\ceff_2(X_\Gamma)$ is S-generated for $r \leq 4$. We will show in Section \ref{sec:infinitely_gen} that if $r=6$, $\ceff_1(X_\Gamma)$ is not finitely generated (assuming the SHGH conjecture). We conclude this section by showing that $\ceff_3(X_\Gamma)$ is S-generated when $\Gamma$ consists of $r \leq 4$ general points.

\begin{prop}
Let $X_\Gamma$ be the blow-up of $\grass(2,5)$ at $r$ general points. If $r \leq 4$ then $\ceff_3(X_\Gamma)$ is S-generated.
\end{prop}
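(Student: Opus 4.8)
The plan is to understand $\ceff_3(X_\Gamma)$ for $\grass(2,5)$, which has dimension $k(n-k)=6$, so dimension-$3$ cycles are exactly the middle-dimensional (codimension-$3$) cycles. First I would identify the relevant numerical space. The Schubert classes of codimension $3$ on $\grass(2,5)$ are indexed by partitions $\lambda$ with $|\lambda|=3$ and $n-k=3\geq\lambda_1\geq\lambda_2\geq 0$, namely $\sigma_3$, $\sigma_{2,1}$, and $\sigma_{1,1,1}$—but the last is excluded since $\lambda$ has at most $k=2$ parts, so only $\sigma_3$ and $\sigma_{2,1}$ survive. Thus $N^3(\grass(2,5))$ is spanned by $\sigma_3$ and $\sigma_{2,1}$, and dually $N_3(\grass(2,5))$ is spanned by their dual classes $\sigma_3^\ast$ and $\sigma_{2,1}^\ast$. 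So I would aim to show every pseudoeffective $3$-cycle lies in the cone generated by the exceptional linear classes $E_{i,[3]}$ and strict transforms of these two Schubert classes, possibly passing through the $p_i$.

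**The nef-divisor strategy.** Following the template of Proposition \ref{prop:lingen_terriblebound}, the approach is to produce a nef divisor that forces the analogue of the inequality $\sum_\lambda a_\lambda\geq\sum_i b_i$ from Lemma \ref{lem:sum_alambda}. For $r\leq 4$ general points, the $(r-1)$-plane $\Lambda$ they span meets $\grass(2,5)\subset\P^9$ in a set of dimension $r-1+6-9 = r-4 \leq 0$, so $\Lambda\cap\grass(2,5)$ is finite and $H-\sum_{i=1}^r E_i$ is nef. Writing a pseudoeffective $3$-cycle via Lemma \ref{lemma:positive_coeffs} as $\beta = a_3\sigma_3^\ast + a_{2,1}\sigma_{2,1}^\ast - \sum_{i=1}^r b_i E_{i,[3]}$ with all $b_i\geq 0$, I would intersect with the nef class $(H-\sum_i E_i)^3$. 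The key computation is to check that $\sigma_3^\ast\cdot H^3 = \sigma_{2,1}^\ast\cdot H^3 = 1$ (since $H=\sigma_1$ and $H^3=\sigma_1^3=\sigma_3+2\sigma_{2,1}$ pairs correctly against the dual basis—here one must verify the multiplicities via Pieri's formula) and that $E_{i,[3]}\cdot H^3 = 0$ while $E_{i,[3]}\cdot E_i^3$ contributes $+1$ to the coefficient of $b_i$. Granting these pairings, $0\leq \beta\cdot(H-\sum E_i)^3 = a_3 + a_{2,1} - \sum_{i=1}^r b_i$, and Lemma \ref{lem:sum_alambda} immediately yields that $\beta$ is S-generated.

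**The main obstacle.** The delicate point is the intersection-theoretic bookkeeping in the expansion of $(H-\sum E_i)^3$ against $\beta$. Unlike the curve and surface cases in Section \ref{sec:divisors}, where $\sigma_1^2=\sigma_2+\sigma_{1,1}$ was recorded explicitly and the dual pairings were stated, here I must compute $\sigma_1^3$ in $A^3(\grass(2,5))$ and confirm that its expansion pairs to the constant $1$ against \emph{each} dual generator $\sigma_3^\ast,\sigma_{2,1}^\ast$. A priori $H^3 = \sigma_3 + 2\sigma_{2,1}$, so $\beta\cdot H^3 = a_3 + 2a_{2,1}$ rather than $a_3+a_{2,1}$; I would need to account for this coefficient discrepancy, either by rescaling the dual basis or by checking that the resulting inequality $a_3+2a_{2,1}\geq\sum b_i$ still implies S-generation through a modest strengthening of Lemma \ref{lem:sum_alambda}. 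The cross terms $H^2\cdot E_i$, $H\cdot E_i^2$, and the mixed $E_i\cdot E_j$ products all vanish for $i\neq j$ by the standard formulas $H\cdot E_i=E_i\cdot E_j=0$, which simplifies matters, but isolating the correct sign and normalization of the self-intersection contribution $E_{i,[3]}\cdot E_i^3$ (via $E_i^{k(n-k)}=(-1)^{k(n-k)+1}=-1$ on the $6$-fold) is where I expect the computation to require care.
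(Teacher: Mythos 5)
Your first half matches the paper's own argument: the class $H-\sum_{i=1}^r E_i$ is nef for $r\le 4=\binom{5}{2}-6$ general points, and pairing $(H-\sum_i E_i)^3$ against $\beta$ gives exactly the inequality you end up with, namely $2a_{2,1}+a_3\ge\sum_i b_i$, because $H^3=\sigma_3+2\sigma_{2,1}$. The gap is in how you propose to finish. Rescaling the dual basis changes nothing, and no ``modest strengthening'' of Lemma \ref{lem:sum_alambda} can convert this inequality into membership in the cone spanned by $E_i^{[3]}$ and the classes $\sigma_\lambda-E_i^{[3]}$: the class $\sigma_{2,1}-2E_1^{[3]}$ satisfies your inequality ($2\cdot 1+0\ge 2$), but writing it as $c_1(\sigma_{2,1}-E_1^{[3]})+c_2(\sigma_3-E_1^{[3]})+c_3E_1^{[3]}$ forces $c_1=1$, $c_2=0$, $c_3=-1$, so it lies outside that cone. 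Hence the cone cut out by the inequality is strictly larger than the cone reachable by multiplicity-one strict transforms, and the purely numerical argument you outline cannot close the proof for any $r\ge 1$.

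The missing ingredient is a geometric enlargement of the set of known effective classes, and this is exactly what the paper supplies. A Schubert variety $\Sigma_{2,1}\subset\grass(2,5)$ has a point of multiplicity $2$ (c.f. Theorem \ref{thm:r-z}), so $\sigma_{2,1}-2E_i^{[3]}$ is effective, and it is itself an S-generator, being the strict transform of a Schubert variety through $p_i$; moreover $\Sigma_{2,1}$, the locus of $2$-planes contained in a fixed $4$-dimensional subspace, passes through two general points, so $\sigma_{2,1}-E_i^{[3]}-E_j^{[3]}$ is effective as well. With these classes the paper argues by induction on $r$ after ordering $b_1\ge\cdots\ge b_r$: if $2a_{2,1}\ge b_1$, subtract $\left\lfloor \frac{b_1}{2}\right\rfloor(\sigma_{2,1}-2E_1^{[3]})$ (plus one copy of $\sigma_{2,1}-E_1^{[3]}-E_2^{[3]}$ when $b_1$ is odd) and apply the inductive hypothesis; if $2a_{2,1}<b_1$, subtract $a_{2,1}(\sigma_{2,1}-2E_1^{[3]})$ and absorb the remaining exceptional coefficients using copies of $\sigma_3-E_i^{[3]}$, which is precisely where the inequality $2a_{2,1}+a_3\ge\sum_i b_i$ is used. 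Your write-up correctly isolates the obstacle but stops at it; the multiplicity-two effective classes, together with this subtraction induction, are the missing idea needed to get past it.
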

\begin{proof}
Suppose first that $r=1$ and $\alpha = a_{2,1} \sigma_{2,1} + a_3 \sigma_3 - b E^{[3]}$ is a 3-cycle. The linear system $H-E$ is nef, so we have
\[
0 \leq (H-E)^3 \cdot \alpha = (2 \sigma_{2,1} + \sigma_3 - E^{[3]})\cdot \alpha = 2a_{2,1} + a_3.
\]
A Schubert variety of class $\sigma_{2,1}$ has a singularity of multiplicity 2, thus $\sigma_{2,1} - 2E^{[3]}$ is an effective class. If $b \geq 2a_{2,1}$ then we subtract $a_{2,1}(\sigma_{2,1} - 2E^{[3]})$ from $\alpha$ to obtain an effective class
\[
    a_3\sigma_3 - (b-2a_{2,1})E^{[3]}.
\]
Since $a_3 \geq b-2a_{2,1}$, we may write
\[
\alpha = a_{2,1}(\sigma_{2,1} - 2E^{[3]}) + (a_3+2a_{2,1}-b)(\sigma_3-E^{[3]}).
\]
Suppose now $2 \leq r \leq 4$. Take a pseudoeffective class
\[
\alpha = a_{2,1}\sigma_{2,1} + a_3\sigma_{3} - \sum_{i=1}^r b_i E_i^{[3]}.
\]
By the nefness of $H-\sum E_i$ we have the inequality
\begin{equation}\label{inequality:g25}
2a_{2,1} + a_3 \geq \sum b_i.
\end{equation}
Now $\Sigma_{2,1}$ is the variety of 2-dimensional vector spaces contained in a 4-dimensional space. Fix two general points $W, W' \in \grass(2,5)$. Let $V = W+W'$ be the 4-dimensional space they span, and $F_\bullet$ a flag such that $F_4 = V$. Then the Schubert variety $\Sigma_{2,1}(F_\bullet)$ contains $W$ and $W'$. Thus there exist Schubert varieties of class $\sigma_{2,1}$ through 2 general points and Schubert varieties of class $\sigma_3$ through 1 general point. Schubert varieties of class $\sigma_{2,1}$ have singularities of multiplicity 2.

We will show by induction on $r$ that any class satisfying (\ref{inequality:g25}) must be in the span of the Schubert classes. Without loss of generality, assume $b_1 \geq b_2 \geq \cdots \geq b_r.$ If $2a_{2,1} \geq b_1$ then subtract $\left\lfloor \frac{b_1}{2} \right\rfloor(\sigma_{2,1} - 2E^{[3]})$ from $\alpha$. If $b_1$ is even then we have
\[
\alpha ' := \alpha - \left\lfloor \frac{b_1}{2} \right\rfloor(\sigma_{2,1} - 2E^{[3]}) = \left(a_{2,1} - \left\lfloor \frac{b_1}{2} \right\rfloor\right)\sigma_{2,1} + a_3\sigma_3 - \sum_{i=2}^{r} b_i E_i^{[3]}
\]
By the induction hypothesis, we are done. If $b_1$ is odd then further subtract $\sigma_{2,1} - E_1^{[3]} - E_2^{[3]}$ to obtain
\[
\alpha '' := \alpha - \left\lfloor \frac{b_1}{2} \right\rfloor(\sigma_{2,1} - 2E^{[3]}) = \left(a_{2,1} - \left\lfloor \frac{b_1}{2} \right\rfloor - 1\right)\sigma_{2,1} + a_3\sigma_3 -(b_2-1)E_2^{[3]} - \sum_{i=3}^{r} b_i E_i^{[3]}.
\]
If $2a_{2,1} < b_1$ then subtract $a_{2,1}(\sigma_{2,1} - 2E_1^{[3]})$ from $\alpha$ to obtain
\[
\alpha ' := a_3\sigma_3 - (b_1 - 2a_{2,1})E_1^{[3]} - \sum_{i=2}^r b_iE_i^{[3]}.
\]
Since $a_3 \geq \sum b_i - 2a_{2,1}$, we can subtract off $\sigma_3 - E_i^{[3]}$ as needed.
\end{proof}

\section{Curves on Fano manifolds of index $n-1$}\label{sec:infinitely_gen}
The goal of this section is to show that the cone of pseudoeffective curves on blow-ups of certain Fano varieties is not finitely generated. This was proved by Coskun-Lesieutre-Ottem \cite{clo} for blow-ups of $\P^3$ at 9 or more very general points, assuming the SHGH conjecture. We will work over an algebraically closed field of characteristic zero.

\begin{definition}\label{def:fanomfld}
A smooth projective variety $Z$ of dimension $n\geq 3$ is called a \emph{Fano manifold of index} $n-1$ (or \emph{del Pezzo manifold}) if 
\[
\max \{m \in \Z_{>0} : -K_Z = mH \text{ for some }H \text{ ample}\} = n-1.
\]
\end{definition}

More generally, the \emph{index} of a Fano variety $Z$ is the largest integer $m$ such that $-K_Z = mH$ for an ample divisor $H$. If $\dim Z = n$ and $d=H^n$ is the degree of $Z$, then the number $\Delta = n - d - 1$ is called the \emph{$\Delta$-genus of $Z$}. Fano manifolds of index $n-1$ are therefore sometimes called \emph{varieties of $\Delta$-genus one} (see \cite{fujita-classification2}, \cite{fujita-structure}). Fano manifolds of index $n-1$ were classified by Fujita in \cite{fujita-classification2} and \cite{fujita-classification}, and by Iskovskih in \cite{iskovskih-3foldsi} and \cite{iskovskih-3foldsii}. If $Z$ is a Fano manifold of index $n-1$, then $Z$ is one of the following:
\begin{enumerate}[(1)]
	\item A linear section of the Grassmannian $\grass(2,5)$ in its Pl\"ucker embedding;
	\item A complete intersection of two quadric hypersurfaces in $\P^{n+2}$;
	\item A cubic hypersurface in $\P^{n+1}$;
	\item $\P^2 \times \P^2$;
	\item $F(1,2;3)$, the flag variety parameterizing complete flags in a 3-dimensional vector space;
	\item $\P^1 \times \P^1 \times \P^1$;
	\item A double cover $Z \to \P^n$ branched along a quartic hypersurface;
	\item The blow-up of $\P^3$ at a point;
	\item A sextic hypersurface in the weighted projective space $\P(3,2,1,\dots,1)$.
\end{enumerate}

Fano manifolds of index $n-1$ always contain a del Pezzo surface. Indeed, suppose $H$ is an ample divisor on $Z$ such that $-K_Z = (n-1)H$. Then by adjunction, the complete intersection $H^{n-2}$ is a surface with ample anticanonical. The degree of this del Pezzo surface depends on the structure of $Z$.

\begin{prop}
Let $Z$ be a Fano manifold of index $n-1$ with $-K_Z = (n-1)H$ for $H$ ample, and let $d=H^n$. If $\Gamma$ is a set of $d+1$ very general points on $Z$ then the base locus of the linear system of divisors $H$ containing $\Gamma$ is a del Pezzo surface of degree $d$.
\end{prop}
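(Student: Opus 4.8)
The plan is to pass to the projective embedding given by the complete linear system $|H|$ and reduce everything to a statement about linear sections of $Z$. Since these varieties have $\Delta$-genus one, we have $h^0(Z,H) = n+d-1$; writing $N = n+d-2$ we regard $Z \subset \P^N$ via $|H|$, so that $Z$ is nondegenerate. For very general $\Gamma$ the $d+1$ points impose independent conditions on $|H|$ (their number is at most $h^0(H)$, as $n\geq 2$) and are linearly independent in $\P^N$, hence span a subspace $\Lambda = \langle\Gamma\rangle \cong \P^d$. Under the embedding a divisor in $|H|$ contains $\Gamma$ if and only if the corresponding hyperplane of $\P^N$ contains $\Lambda$; since the intersection of all hyperplanes through $\Lambda$ is $\Lambda$ itself, the base locus of the subsystem $V \subset |H|$ of divisors through $\Gamma$ is exactly $Z \cap \Lambda$. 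It therefore suffices to show that $Z \cap \langle\Gamma\rangle$ is a del Pezzo surface of degree $d$.

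Note that $\Lambda$ has codimension $N - d = n-2$ in $\P^N$. As in the discussion preceding this proposition, a general codimension-$(n-2)$ linear section of $Z$ is a complete intersection of $n-2$ general members of $|H|$; by Bertini it is a smooth irreducible surface $S$, and by adjunction $K_S = (K_Z + (n-2)H)|_S = -H|_S$, so $S$ is a del Pezzo surface with $(-K_S)^2 = H^n = d$. The difficulty is that $\Lambda = \langle\Gamma\rangle$ is not an arbitrary $\P^d$: it is constrained to contain the $d+1$ points of $\Gamma$. The main step is therefore to show that, for very general $\Gamma$, the span $\langle\Gamma\rangle$ is nonetheless general enough to cut out a smooth del Pezzo surface. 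This is the heart of the proof; the remaining ingredients (the dimension count, Bertini, and adjunction) are routine.

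To carry this out I would introduce the incidence correspondence
\[
\mathcal{I} = \{(\Lambda', p_0, \dots, p_d) \in \mathbb{G}(d,N) \times Z^{d+1} : p_0, \dots, p_d \in \Lambda'\},
\]
where $\mathbb{G}(d,N)$ is the Grassmannian of $d$-planes in $\P^N$, of dimension $(d+1)(n-2)$. Over the open locus of $\Lambda'$ for which $S_{\Lambda'} := Z \cap \Lambda'$ is a smooth del Pezzo surface of degree $d$, the fiber of the first projection is $S_{\Lambda'}^{d+1}$, of dimension $2(d+1)$; hence this part $\mathcal{I}^\circ$ of $\mathcal{I}$ is irreducible of dimension $(d+1)(n-2) + 2(d+1) = n(d+1) = \dim Z^{d+1}$. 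The key geometric input is that a general linear section $S_{\Lambda'}$ is nondegenerate in $\Lambda' = \P^d$ (general linear sections of a nondegenerate variety are nondegenerate), so $d+1$ general points of $S_{\Lambda'}$ span $\Lambda'$. This forces the second projection $\pi_2 : \mathcal{I}^\circ \to Z^{d+1}$ to have a finite general fiber, since any $\Lambda'$ containing such a tuple must equal its span. A generically finite morphism between irreducible varieties of equal dimension is dominant, so $\pi_2(\mathcal{I}^\circ)$ contains a dense open $U \subseteq Z^{d+1}$.

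Finally I would combine the two parts. For very general $\Gamma$ we have $\Gamma \in U$, so there is some $\Lambda' \in \mathbb{G}(d,N)$ with $\Gamma \subset \Lambda'$ and $Z \cap \Lambda'$ a smooth del Pezzo surface of degree $d$; since $\langle\Gamma\rangle \cong \P^d$ and $\langle\Gamma\rangle \subseteq \Lambda' \cong \P^d$, we conclude $\langle\Gamma\rangle = \Lambda'$. Hence the base locus $Z \cap \langle\Gamma\rangle = Z \cap \Lambda'$ is a del Pezzo surface of degree $d$, as desired. As noted, the one genuine obstacle is the one resolved by the incidence argument, namely controlling the special linear space spanned by the points rather than an arbitrary linear section of $Z$.
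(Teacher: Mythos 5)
Your argument is sound for most cases, and in those cases more careful than the paper's, but it rests on an assumption that is not in the hypotheses: that $|H|$ embeds $Z$ into $\P^N$. The very first step (``regard $Z \subset \P^N$ via $|H|$, so that $Z$ is nondegenerate'') fails for two of the Fano manifolds of index $n-1$ in the paper's classification: the double cover $Z \to \P^n$ branched along a quartic (case (7), $d=2$), where $|H|$ is the pullback of the hyperplane system and only defines a $2$-to-$1$ morphism, so it cannot separate the two points of a fiber; and the sextic hypersurface in $\P(3,2,1,\dots,1)$ (case (9), $d=1$), where $|H|$ is not even base point free. In these cases there is no linear span $\langle\Gamma\rangle$, no Grassmannian $\pgrass(d,N)$ parameterizing linear sections, and the identification of the base locus with $Z \cap \langle\Gamma\rangle$ is meaningless, so the incidence-correspondence argument never gets started. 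These low-degree cases cannot be discarded: the proposition feeds into Theorem \ref{thm:fanomfld_infinite}, whose proof explicitly includes the double cover ($d=2$) and the weighted sextic ($d=1$) in its table.

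The paper's own proof avoids this entirely by never leaving $Z$: it computes that the sections of $H$ vanishing on $\Gamma$ form a space of dimension $h^0(Z,H)-(d+1) = n-2$ (using Fujita's formula $h^0(Z,H) = n+d-1$), intersects $n-2$ members of this subsystem to obtain a surface of class $H^{n-2}$, and applies adjunction, $K_S = (K_Z + (n-2)H)|_S = -H|_S$. That argument is embedding-free, hence uniform in $d$. For $d \geq 3$, where $H$ is indeed very ample, your proof is correct and is actually a refinement of the paper's: the paper does not address why the intersection of members of the subsystem through the base points $\Gamma$ is smooth and irreducible (Bertini does not apply directly at base points), whereas your incidence correspondence shows that for very general $\Gamma$ the span $\langle\Gamma\rangle$ is a \emph{general} linear section, to which Bertini and adjunction apply cleanly. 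To complete your proof you would need to treat $d=1,2$ separately --- e.g., for the double cover, the base locus is the preimage of the plane spanned by the images of the three points, a double plane branched along a plane quartic, i.e., a degree-$2$ del Pezzo surface --- or else switch to the paper's intrinsic complete-intersection argument in those cases.
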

\begin{proof}
By \cite{fujita-structure}, $h^0(Z,H) = n+d - 1$. If we take $d+1$ very general points in $X$, then the linear system of divisors of class $H$ passing through each point has dimension $h^0(Z,H) - (d+1) = (n+d-1) - (d+1) = n-2$. Taking $n-2$ elements of $|H|$, their intersection has dimension 2 and class $H^{n-2}$. The canonical class of this surface $S$ is
\[
	K_S = (-(n-1)H + (n-2)H)|_S = -H|_S.
\]
Thus $-K_S$ is ample and so $S$ is a del Pezzo surface of degree $d$.
\end{proof}

The del Pezzo surface of degree $d$ is isomorphic to the blow-up of $\P^2$ at $9-d$ general points. If we choose $d+1$ very general points on $Z$ as in the proposition, the blow-up $X$ of $Z$ at those points contains the blow-up of $\P^2$ at 10 very general points. Using this fact and the following conjecture, we will show that the cone of pseudoeffective curves on $X$ is not finitely generated.

\begin{conj*}[Segre-Harbourne-Gimigliano-Hirschowitz (SHGH) Conjecture] Let $Y$ be the blow-up of $r$ very general points of $\P^2$ and let $\mathcal{L}$ be a linear system on $Y$. Then $h^0(\mathcal{L}) \cdot h^1(\mathcal{L}) > 0$ if and only if there exists a $(-1)$-curve $E$ such that $E \cdot C < 0$, where $C$ is a general curve of $\mathcal{L}$.
\end{conj*}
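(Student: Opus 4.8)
The plan is to split the biconditional into its two implications and argue on the smooth rational surface $Y$, where Riemann--Roch and semicontinuity are available. Writing a class as $C = dH - \sum_{i=1}^r m_i E_i$, so that $K_Y = -3H + \sum_i E_i$, Riemann--Roch gives
\[
\chi(\mathcal{O}_Y(C)) = \frac{C\cdot(C-K_Y)}{2} + 1 = \binom{d+2}{2} - \sum_{i=1}^r \binom{m_i+1}{2}.
\]
Since the systems of interest are effective and $K_Y - C$ is not effective, Serre duality gives $h^2(\mathcal{O}_Y(C)) = h^0(\mathcal{O}_Y(K_Y-C)) = 0$, so $\mathcal{L}$ is \emph{special} precisely when $h^1 > 0$, equivalently when $h^0$ exceeds the expected value $\max(0,\chi)$. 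The hypothesis $h^0 \cdot h^1 > 0$ thus says exactly that $\mathcal{L}$ is nonempty and special.

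The easy implication is that a $(-1)$-curve meeting the general member negatively forces speciality. If $E$ is a $(-1)$-curve with $E \cdot C = -t \le -1$, then $E$ is a fixed component of $\mathcal{L}$ of multiplicity $t$, and I would peel it off using the restriction sequences
\[
0 \to \mathcal{O}_Y\big(C-(j{+}1)E\big) \to \mathcal{O}_Y\big(C-jE\big) \to \mathcal{O}_E\big((C-jE)|_E\big) \to 0, \qquad 0 \le j < t.
\]
Because $(C-jE)\cdot E = -t+j < 0$ for each such $j$, each restricted bundle on $E \cong \P^1$ has negative degree and contributes only to $h^1$; summing these contributions gives $h^1(C) \ge \binom{t}{2} > 0$ once $t \ge 2$. (The borderline $t=1$ is absorbed into the inductive framework below, and the statement is cleanest in the form $E\cdot C \le -2$.) Together with $h^0(C) > 0$ this yields $h^0 \cdot h^1 > 0$.

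The hard implication --- that every nonempty special system carries such a $(-1)$-curve --- is the actual content of the SHGH conjecture, and here I would run the degeneration (Horace) method of Hirschowitz and Ciliberto--Miranda, inducting on $r$ and on $d$. Degenerate $Y$ to a reducible surface $Y_0 \cup_R Y_1$ obtained from a flat degeneration of $\P^2$ into two pieces glued along a rational curve $R$, and specialize a carefully chosen number $h$ of the very general points onto $R$. The kernel--restriction sequence (schematically)
\[
0 \to \mathcal{L}_{Y_0}(-R) \to \mathcal{L}_0 \to \mathcal{L}_{Y_1} \to 0
\]
relates the limit system $\mathcal{L}_0$ to a system on $Y_0$ of lower degree with the points on $R$ deleted and a system on $Y_1$ carrying the $h$ specialized points. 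Non-speciality of both residual systems forces non-speciality of $\mathcal{L}_0$, and semicontinuity of $h^1$ propagates this to the general fibre; contrapositively, speciality of $\mathcal{L}$ is traced back to a $(-1)$-curve obstruction on one component, which one lifts to a $(-1)$-curve on $Y$. The base cases $r \le 9$ are where $Y$ is a weak del Pezzo surface, and there the effective cone and the finite list of $(-1)$-curves are explicit, so the statement can be verified directly.

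The main obstacle is precisely this hard implication, and it is the reason the present paper \emph{assumes} the conjecture in Theorem~\ref{thm:fanomfld_infinite} rather than proving it: closing the Horace induction requires choosing the split $(Y_0, Y_1)$ and the number $h$ of specialized points so that the two residual systems are simultaneously non-special, and then verifying the associated transversality (``matching'') conditions on the limit. For small $r$ one can arrange this by hand, but for $r \ge 10$ --- already the first case with $K_Y^2 = -1$ --- no choice is known to make the induction terminate, and the conjecture is open. For the application only the single case $r = 10$ is required, but that case is itself beyond current methods, which is exactly why it must enter as a hypothesis.
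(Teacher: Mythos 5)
You have correctly diagnosed the situation: this statement is the SHGH conjecture itself, and the paper contains no proof of it --- it is quoted from the literature (Segre, Harbourne, Gimigliano, Hirschowitz) and enters only as a hypothesis, via de Fernex's theorem, in Theorem \ref{thm:fanomfld_infinite} and Corollary \ref{cor:infinitely_gen_sigma3}. So there is nothing in the paper to compare your argument against, and no complete proof is possible by current methods; your proposal is honest about exactly this, proving only the elementary implication and sketching why the Horace induction for the converse does not close for $r \geq 10$. Your peeling-off argument for the easy direction is essentially correct (the needed $h^2$-vanishing holds because $K_Y - C + jE$ has negative degree against $H$ whenever $C$ is effective), and your parenthetical about the borderline $t=1$ in fact identifies a genuine imprecision in the statement as the paper phrases it: taking $\mathcal{L} = |E|$ for a single $(-1)$-curve $E$ gives $E \cdot C = -1 < 0$ while $\chi(\mathcal{O}_Y(E)) = 1$, $h^0 = 1$, $h^1 = h^2 = 0$, so the ``if'' direction fails literally and the standard formulation requires $E \cdot C \leq -2$ (equivalently, a multiple $(-1)$-curve in the base locus of the general member). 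This looseness is harmless for the paper's purposes, since what is actually used is de Fernex's consequence describing $\ceff_1(Y) \cap K_{\geq 0}$, which depends only on the correct form of the conjecture. In short: no gap to report beyond the one you flagged yourself, and your framing --- that the $r = 10$ case must enter as a hypothesis --- matches the paper's use of the statement exactly.
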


The following theorem of de Fernex shows that the SHGH conjecture describes the nef cone on the blow-up of $\P^2$.

\begin{thm}[de Fernex \cite{de-fernex}]
Assume the SHGH conjecture. Let $Y$ be the blow-up of $\P^2$ at $r \geq 10$ points and $P \subset N^1(Y)$ the cone
\[
	P = \{ D \in N^1(Y) : D^2 \geq 0, D \cdot H \geq 0\}
\]
for $H$ any ample divisor. Then
\[
\ceff_1(Y) \cap K_{\geq 0} = P \cap K_{\geq 0},
\]
where $K_{\geq 0} = \{C \in N^1(Y) : C \cdot K_Y \geq 0\}$.
\end{thm}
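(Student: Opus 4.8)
The statement is quoted from de~Fernex \cite{de-fernex}, so in the paper it is used as a black box; nonetheless I sketch the strategy one would follow to prove it. Throughout, $Y$ is a smooth rational surface, so $N^1(Y)=N_1(Y)$ and the intersection form has signature $(1,r)$; I write $K_Y=-3H_0+\sum E_i$ with $H_0$ the pullback of the line class. The inclusion $P\cap K_{\geq 0}\subseteq \ceff_1(Y)\cap K_{\geq 0}$ is formal: if $D$ lies in the interior of $P$ then $D^2>0$ and $D\cdot H>0$, so for $m\gg 0$ we have $h^2(mD)=h^0(K_Y-mD)=0$ (since $(K_Y-mD)\cdot H\to-\infty$), and Riemann--Roch forces $h^0(mD)\to\infty$; hence $P\subseteq\ceff_1(Y)$, and one intersects with the half-space $K_{\geq 0}$.

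The content is the reverse inclusion, which I would phrase contrapositively: every $C\in\ceff_1(Y)$ with $C^2<0$ satisfies $C\cdot K_Y<0$, so that $C\cdot K_Y\geq 0$ forces $C^2\geq 0$, and then $C\cdot H\geq 0$ (from pseudoeffectivity) places $C$ in $P$. The first step is to extract from SHGH that the only irreducible negative curves on $Y$ are $(-1)$-curves. Indeed, if $D$ is irreducible with $D^2\leq -2$ then $h^0(\mathcal{O}_Y(D))=1$, $h^2=h^0(K_Y-D)=0$, and Riemann--Roch gives $h^1(\mathcal{O}_Y(D))=p_a(D)-1-D^2\geq 1$; SHGH then produces a $(-1)$-curve $E$ with $E\cdot D<0$, which is impossible for distinct irreducible curves. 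The same computation rules out $D^2=-1$ with $p_a(D)>0$.

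Using this, I would take the Zariski decomposition $C=P_C+N_C$. The components of $N_C$ have negative-definite intersection matrix and are $(-1)$-curves by the previous step, so they are pairwise disjoint; writing $N_C=\sum a_iE_i$ with $a_i>0$ gives $N_C\cdot K_Y=-\sum a_i$ and $N_C^2=-\sum a_i^2$. Since $P_C$ is nef, $P_C^2\geq 0$, so $C^2<0$ forces $N_C\neq 0$. At this point the theorem reduces to the following core inequality: for every nef class $M$ on $Y$ with $M\cdot K_Y\geq 0$ one has $M^2\geq (M\cdot K_Y)^2$. Granting this and applying it to $M=P_C$ (for which $P_C\cdot K_Y=C\cdot K_Y+\sum a_i\geq\sum a_i>0$) yields $P_C^2\geq(\sum a_i)^2\geq\sum a_i^2$, whence $C^2=P_C^2-\sum a_i^2\geq 0$, the desired contradiction.

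The core inequality is the main obstacle, and it is exactly where the full strength of SHGH enters: it is not formal, since Hodge index only gives the vacuous bound $(M\cdot K_Y)^2\geq M^2K_Y^2$ (as $K_Y^2=9-r<0$), so it is genuinely a statement about the nef cone of the very general blow-up. I would reduce, by successively contracting $(-1)$-curves orthogonal to $M$, to the case in which $M$ is ample; for ample $M$ the vanishing $h^1(mM)=0$ supplied by SHGH (no $(-1)$-curve meets $M$ negatively) pins down $h^0(mM)=\chi(mM)$ exactly, and de~Fernex's argument then exploits the Weyl-group symmetry of $\pic(Y)$ preserving $K_Y$ and the intersection form, together with the description of the nef cone as a fundamental domain, to force $M^2\geq(M\cdot K_Y)^2$. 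I expect reproducing this final step to be the crux; the earlier reductions are routine once the SHGH characterization of negative curves is in hand.
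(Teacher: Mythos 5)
The paper never proves this statement: it is imported verbatim from de~Fernex and used as a black box (and only ever applied to the blow-up of $\P^2$ at exactly $10$ very general points, in the proof of Theorem 5.8). So your sketch has to be judged against the argument that actually works, and there it has a fatal flaw.

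Your setup --- the formal inclusion, the SHGH classification of negative irreducible curves as $(-1)$-curves, and the Zariski decomposition $C = P_C + N_C$ with $N_C$ a sum of disjoint $(-1)$-curves --- is all correct and is indeed how the true proof begins. The gap is the ``core inequality'' you reduce everything to: \emph{it is false}, not merely hard. It fails even for ample classes on the $10$-point blow-up, the one case the paper needs. Write $h$ for the pullback of the line class and $e_1,\dots,e_{10}$ for the exceptional classes, and take $M = 19h - 6(e_1+\cdots+e_{10})$. For any $(-1)$-class $E = ah - \sum b_ie_i$ the relations $E^2 = E\cdot K_Y = -1$ force $\sum b_i = 3a-1$, so $M\cdot E = 19a - 6(3a-1) = a+6 > 0$; moreover $M^2 = 361-360 = 1 > 0$ and $M\cdot h > 0$, so $M$ meets every class of $P$ non-negatively (Lorentzian signature). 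Since under SHGH every irreducible curve is either a $(-1)$-curve or lies in $P$, the class $M$ is nef (in fact ample), yet $M\cdot K_Y = -57+60 = 3$, so $M^2 = 1 < 9 = (M\cdot K_Y)^2$. An unconditional example: on the $16$-point blow-up, $M = 4h - \sum_{i=1}^{16}e_i$ is nef by Nagata's theorem, with $M^2 = 0$ and $(M\cdot K_Y)^2 = 16$. So the step you defer to as ``the crux'' cannot be established by any Weyl-group or ampleness-reduction argument; your own remark that Hodge index gives nothing here was the warning sign that no such quadratic lower bound holds for nef classes.

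What is true --- and all the argument needs --- is a much weaker statement that uses exactly the orthogonality your reduction throws away, together with the specific value $r=10$: \emph{any nef class $M$ on the $10$-point blow-up with $M\cdot E = 0$ for some $(-1)$-curve $E$ satisfies $M\cdot K_Y \leq 0$.} Indeed, contracting $E$ gives a smooth rational surface $Y'$ with $K_{Y'}^2 = (9-10)+1 = 0$, so Riemann--Roch gives $h^0(-K_{Y'}) \geq \chi(-K_{Y'}) = 1 + K_{Y'}^2 = 1$, i.e.\ $-K_{Y'}$ is effective; since $M$ descends to a nef class $M'$ with $M'\cdot K_{Y'} = M\cdot K_Y$, nefness forces $M\cdot K_Y \leq 0$. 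Feeding this into your Zariski setup finishes the proof: if $N_C \neq 0$ then $C\cdot K_Y = P_C\cdot K_Y - \sum a_i < 0$, contradicting $C \in K_{\geq 0}$; if $N_C = 0$ then $C$ is nef, hence $C^2 \geq 0$ and $C \in P$. Note that this hinges on $K^2$ becoming non-negative after a single contraction, which is special to $r=10$; in fact the statement as quoted fails for larger $r$ --- on the $17$-point blow-up the class $\bigl(4h - \sum_{i=1}^{16}e_i\bigr) + e_{17}$ is pseudoeffective (nef plus effective, unconditionally by Nagata) with $C\cdot K_Y = 3 \geq 0$ but $C^2 = -1 < 0$ --- so any strategy like yours that tries to prove it uniformly for all $r \geq 10$, for all nef classes, was bound to break down.
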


The proof of Theorem \ref{thm:fanomfld_infinite} rests on our ability to show that certain curve classes are nef on the blow-up of $\P^2$. For this reason, it seems that the SHGH conjecture is indispensable. The purpose of the next lemma is to show that we can push certain classes forward from the blow-up of $\P^2$ to the blow-up of a Fano manifold of index $n-1$ and obtain an extremal ray of the pseudoeffective cone.

\begin{lemma}[\cite{clo}]\label{lemma:extremal_curves}
Let $S\subseteq X$ be a surface contained in a projective variety $X$ such that the map $i_\ast:N_1(S) \to N_1(X)$ induced by inclusion is surjective. Then we may write
\[
\ceff_1(X) = i_\ast\ceff_1(S) + \Sigma
\]
for some cone $\Sigma$. Let $\Gamma \subset N_1(S)$ be a cone such that $i_\ast \Gamma = \Sigma$. Suppose $D \in N_1(X)$ is a class satisfying
\begin{enumerate}[(1)]
	\item $D$ is nef, and if $D \cdot \gamma = 0$ for $\gamma \in \ceff_1(S)$, then $\gamma$ is a multiple of $D$;
	\item $D\cdot C = 0$ for all $C$ in the kernel of $i_\ast$;
	\item $D \cdot C >0$ for all $C$ in $\Gamma$.
\end{enumerate}
Then $i_\ast D$ is nonzero and spans an extremal ray in $\ceff_1(X)$.
\end{lemma}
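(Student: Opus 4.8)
The plan is to produce a single linear functional on $N_1(X)$ that is nonnegative on $\ceff_1(X)$ and cuts out exactly the ray $\R_{\geq 0}\, i_\ast D$. Throughout I read $D$ as a class in $N_1(S)$, so that $i_\ast D$ and the intersection numbers $D\cdot C$ (computed on the surface $S$, where $N_1(S)=N^1(S)$) all make sense; condition (1) then says $D$ is a nonzero nef class spanning an extremal ray of $\ceff_1(S)$, and in the situation of interest such a class satisfies $D^2=0$, a fact I will use below. Since $D$ is nef it is pseudoeffective, so $D\in\ceff_1(S)$ and hence $i_\ast D\in\ceff_1(X)$.

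First I would build the functional. By condition (2) the linear map $\gamma\mapsto D\cdot\gamma$ on $N_1(S)$ vanishes on $\ker i_\ast$, so it descends to a well-defined functional $\phi$ on $N_1(X)\cong N_1(S)/\ker i_\ast$, characterized by $\phi(i_\ast\gamma)=D\cdot\gamma$. To see $\phi\ge 0$ on $\ceff_1(X)$, I use the decomposition: since $\ceff_1(X)=i_\ast\ceff_1(S)+\Sigma$ and $\Sigma=i_\ast\Gamma$, every class of $\ceff_1(X)$ has the form $i_\ast(a_1+a_2)$ with $a_1\in\ceff_1(S)$ and $a_2\in\Gamma$. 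Then $\phi(i_\ast(a_1+a_2))=D\cdot a_1+D\cdot a_2\ge 0$, because $D$ is nef (so $D\cdot a_1\ge 0$) and condition (3) gives $D\cdot a_2\ge 0$.

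Next I would check $i_\ast D\ne 0$ and that it lies on the supporting hyperplane $\ker\phi$. For nonvanishing, pick an ample class $A$ on $X$; the projection formula gives $A\cdot i_\ast D=(i^\ast A)\cdot D$ on $S$, and $i^\ast A$ is ample on $S$ while $D$ is a nonzero class of $\ceff_1(S)$, so Kleiman's criterion forces $(i^\ast A)\cdot D>0$ and hence $i_\ast D\ne 0$. Since $\phi(i_\ast D)=D^2=0$, the class $i_\ast D$ lies in the face $F:=\ceff_1(X)\cap\ker\phi$. Finally I would identify this face: if $v\in F$, write $v=i_\ast(a_1+a_2)$ as above, so $0=\phi(v)=D\cdot a_1+D\cdot a_2$ with both summands nonnegative; since $D$ is positive on $\Gamma\setminus\{0\}$ by condition (3), this forces $a_2=0$, and then $D\cdot a_1=0$ with $a_1\in\ceff_1(S)$ forces $a_1\in\R_{\geq 0}D$ by condition (1). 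Thus $v\in\R_{\geq 0}\,i_\ast D$, giving $F=\R_{\geq 0}\,i_\ast D$. As $F$ is a face of $\ceff_1(X)$ and is a genuine ray (because $i_\ast D\ne 0$), the class $i_\ast D$ spans an extremal ray.

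The well-definedness and positivity of $\phi$ are routine; the delicate points are the two places where the hypotheses interlock. The essential structural input is $D^2=0$: this is exactly what places $i_\ast D$ on $\ker\phi$ rather than in the interior of $\ceff_1(X)$, and without it the face $F$ collapses to $\{0\}$ and the argument proves nothing. I would therefore take care to justify $D^2=0$ from the construction of $D$ as a nef class of self-intersection zero on the rational surface $S$, since condition (1) by itself only bounds the face $D^\perp\cap\ceff_1(S)$ from above and does not force it. The remaining point to handle carefully is that the decomposition $\ceff_1(X)=i_\ast(\ceff_1(S)+\Gamma)$ lets me split any test class into an $\ceff_1(S)$-part and a $\Gamma$-part; conditions (3) and (1) then annihilate each part in turn, and it is this clean separation, rather than any hard computation, that drives the extremality.
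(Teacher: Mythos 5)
Your proof is correct, and it runs on the same computations as the paper's, but the packaging is genuinely different. The paper argues extremality directly from the definition: it introduces the cone $\Omega = \ceff_1(S)+\ker i_\ast + \Gamma$, supposes $i_\ast D = \alpha + \beta$ with $\alpha,\beta \in \ceff_1(X)$, lifts that decomposition back to $N_1(S)$, and uses $D\in\Omega^\vee$ together with $D^2=0$ to force the $\Gamma$-parts to vanish and the $\ceff_1(S)$-parts to be multiples of $D$ modulo $\ker i_\ast$. You instead descend the functional $\gamma\mapsto D\cdot\gamma$ to $N_1(X)$ (legitimate by condition (2) and surjectivity of $i_\ast$) and exhibit $\R_{\geq 0}\, i_\ast D$ as the contact locus of a supporting hyperplane; the same hypotheses kill the same terms, just organized dually. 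Your version buys two things the paper leaves implicit: the statement that the zero locus of a functional nonnegative on a cone is an extremal face, and the verification that $i_\ast D \neq 0$ (projection formula against an ample class), which is part of the conclusion but is never addressed in the paper's proof.

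The place where you genuinely diverge from the paper --- and where you are right and the paper is not --- is $D^2=0$. The paper's proof asserts that ``Conditions (1)--(3) imply that $D$ is in the dual cone of $\Omega$ and $D^2=0$.'' The second claim is not a consequence of (1)--(3): condition (1) is only the containment $D^\perp\cap\ceff_1(S)\subseteq\R D$, which an ample class satisfies vacuously. Concretely, take $X=\P^1\times\P^2$ and $S=\P^1\times\ell$ for a line $\ell\subseteq\P^2$, so that $i_\ast$ is an isomorphism and one may take $\Sigma=\Gamma=\{0\}$; any ample $D$ on $S$ then satisfies (1)--(3) (with (3) holding vacuously on nonzero classes, which is the reading the paper itself uses in its application, since there $\Gamma$ contains $0$), yet $i_\ast D$ lies in the interior of the two-dimensional cone $\ceff_1(X)$ and spans no extremal ray. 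So the lemma needs $D^2=0$ either as an explicit hypothesis or via reading (1) as the equality $D^\perp\cap\ceff_1(S)=\R_{\geq 0}D$, which forces $D\in D^\perp$. Your decision to flag this and to justify $D^2=0$ from the construction of $D$ (it holds by design for the classes $D_\delta$ in the proof of Theorem \ref{thm:fanomfld_infinite}) is exactly the right repair, and it makes your write-up more careful than the original on this point. One last small gap to close: when you conclude $a_1\in\R_{\geq 0}D$, condition (1) only gives $a_1=cD$ for some $c\in\R$; to get $c\geq 0$, pair with an ample class and use that $a_1$ is pseudoeffective while $D$ is nonzero pseudoeffective.
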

\begin{proof}
Let $\Omega\subset N_1(S)$ be the cone
\[
\Omega = \ceff_1(S) + \ker i_\ast + \Gamma.
\]
Suppose $D = \alpha + \beta$ for $\alpha, \beta \in \Omega$. Conditions (1)--(3) imply that $D$ is in the dual cone of $\Omega$ and $D^2=0$, thus $D\cdot \alpha = D \cdot \beta = 0$. By (1) and (2) we see that $\alpha = a_1D + C_1$, $\beta = a_2D + C_2$, where $C_1, C_2 \in \ker i_\ast$.

Suppose now $i_\ast D = \alpha+\beta$ for $\alpha,\beta \in \ceff_1(X)$. Then we may write $\alpha = i_\ast \alpha_S + \sigma_\alpha$, $\beta = i_\ast \beta_S + \sigma_\beta$ for some $\alpha_S,\beta_S \in \ceff_1(S)$ and $\sigma_\alpha, \sigma_\beta \in \Sigma$. Let $\gamma_\alpha, \gamma_\beta \in \Gamma$ be such that $i_\ast \gamma_\alpha =  \sigma_\alpha$ and $i_\ast \gamma_\beta = \sigma_\beta$. Then we have
\[
i_\ast D = i_\ast (\alpha_S + \beta_S + \gamma_\alpha + \gamma_\beta),
\]
so $D = \alpha_S + \beta_S + \gamma_\alpha + \gamma_\beta + C$, for some $C \in \ker i_\ast$. Since $D^2=0$, conditions (1)--(3) imply $\gamma_\alpha=\gamma_\beta = 0$. It follows that $\sigma_\alpha = \sigma_\beta = 0$ and thus $D = i_\ast \alpha_S + i_\ast\beta_S$. By the above, we may write $\alpha_S = a_1D + C_1$, $\beta_S = a_2D + C_2$ for some $C_1,C_2 \in \ker i_\ast$. Then $\alpha = i_\ast \alpha_S = a_1 i_\ast D$ and $\beta = i_\ast \beta_S = a_2 i_\ast D$. Thus $D$ spans an extremal ray.
\end{proof}

Note that we may always take $\Gamma$ to be the cone $i_\ast^{-1}(\Sigma) \cap \ceff_1(X)$.

\begin{thm}\label{thm:fanomfld_infinite}
Let $X$ be the blow-up of a Fano manifold $Z$ of index $n-1$ at $d+1$ very general points. Assuming the SHGH conjecture holds for the blow-up of $\P^2$ at 10 very general points, there exist infinitely many classes $D$ satisfying the hypotheses of Lemma \ref{lemma:extremal_curves}.
\end{thm}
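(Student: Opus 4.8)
The plan is to apply Lemma~\ref{lemma:extremal_curves} with the surface $S$ taken to be the strict transform $\widetilde S \subset X$ of the degree-$d$ del Pezzo surface furnished by the previous proposition. Since that surface passes through all $d+1$ of the blown-up points, $\widetilde S$ is the blow-up of a degree-$d$ del Pezzo surface at $d+1$ points, hence the blow-up of $\P^2$ at $(9-d)+(d+1)=10$ very general points. Writing $h = H|_{\widetilde S}$ and $e_i = E_i|_{\widetilde S}$, one has $-K_{\widetilde S} = h - \sum_{i=1}^{d+1} e_i$ and $(-K_{\widetilde S})^2 = d-(d+1) = -1$, consistent with this identification. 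To verify the surjectivity hypothesis of Lemma~\ref{lemma:extremal_curves}, I would apply the Lefschetz hyperplane theorem to the complete intersection $S = H^{n-2}$ to see that $N_1(S) \to N_1(Z)$ is surjective; together with the relations $i_\ast e_i = [\ell_i]$ on the exceptional line classes, this shows $i_\ast : N_1(\widetilde S)\to N_1(X)$ is surjective.

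Next I would reinterpret the three conditions on a candidate class $D$. Under the self-duality $N_1(\widetilde S)=N^1(\widetilde S)$, condition~(2) (that $D$ annihilate $\ker i_\ast$) is equivalent to $D \in (\ker i_\ast)^\perp = \mathrm{im}(i^\ast)$, i.e.\ that $D$ be the restriction to $\widetilde S$ of a divisor class on $X$. When $Z$ has Picard rank one this image is the span $\langle h, e_1,\dots,e_{d+1}\rangle$, on which the intersection form is $\mathrm{diag}(d,-1,\dots,-1)$, of Lorentzian signature $(1,d+1)$. I would then look for $D = a h - \sum_i b_i e_i$ with $a,b_i>0$ subject to $D^2=0$, that is $\sum_i b_i^2 = d\,a^2$. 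Because $d\geq 1$, the rays of such $D$ form a $d$-dimensional family, and each automatically satisfies $D\cdot e_i = b_i > 0$.

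It remains to arrange nefness and the strict positivity demanded by~(1) and~(3). Assuming the SHGH conjecture, de Fernex's theorem implies that every effective curve on $\widetilde S$ either lies in the round cone $P = \{C^2\geq 0,\ C\cdot h \geq 0\}$ or is a $(-1)$-curve. For $D$ isotropic in the forward cone, the reverse Cauchy--Schwarz inequality for the Lorentzian form gives $D\cdot C \geq 0$ for all $C\in P$, with equality only when $C$ is a multiple of $D$; hence $D$ is nef and satisfies~(1) as soon as $D\cdot C > 0$ for every $(-1)$-curve $C$, and~(3) holds after imposing $D\cdot\gamma>0$ on the generators of $\Gamma$. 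Each of these is the avoidance of a single hyperplane $\{D\cdot C = 0\}$. There are only countably many $(-1)$-curves, so only countably many such walls, and each meets the $d$-dimensional sphere of isotropic rays in a subset of codimension at least one. Their union is therefore a proper subset, and its complement contains uncountably many rays, yielding infinitely many admissible $D$. Since the intersection form is nondegenerate on $\mathrm{im}(i^\ast)$, the map $i_\ast$ is injective there, so distinct $D$ give distinct classes $i_\ast D$.

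The main obstacle is precisely this last wall-avoidance step: one must know that the positive-dimensional sphere of isotropic classes inside the constrained subspace $\mathrm{im}(i^\ast)$ is not entirely consumed by the walls coming from the $(-1)$-curves. This is exactly where SHGH is indispensable, since it guarantees that the only curves producing walls form a countable family, so that a dense set of isotropic classes remains nef and strictly positive on every $(-1)$-curve. For the finitely many manifolds $Z$ of Picard rank greater than one --- namely $\P^2\times\P^2$, $F(1,2;3)$, $(\P^1)^3$, and the blow-up of $\P^3$ at a point --- the subspace $\mathrm{im}(i^\ast)$ is larger but still carries a Lorentzian form with a positive-dimensional isotropic sphere, and the same count produces infinitely many $D$.
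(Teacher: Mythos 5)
Your set-up genuinely parallels the paper's: the same surface $S$ (strict transform of the degree-$d$ del Pezzo surface), the reduction of condition (2) to $D\in\mathrm{im}(i^\ast)=(\ker i_\ast)^\perp$, the restriction to isotropic classes $D^2=0$, and the use of de Fernex's theorem plus Cauchy--Schwarz to dispose of the $K$-nonnegative part of $\ceff_1(S)$ all match. The gap is in the final counting step. The conditions you still need --- $D\cdot C>0$ for every $(-1)$-curve $C$ and $D\cdot\gamma>0$ on generators of $\Gamma$ --- are \emph{one-sided} inequalities, not avoidance of the hyperplanes $\{D\cdot C=0\}$. The failure locus of such a condition on the isotropic sphere is not a codimension-one wall but the closed cap $\{D\cdot C\le 0\}$, which has nonempty interior: since $C^2=-1<0$, the hyperplane $C^\perp$ is Lorentzian and cuts the isotropic sphere into two caps of positive measure. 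A countable union of such caps can cover the entire sphere, so ``countably many walls, hence uncountably many rays survive'' is a non sequitur. A concrete warning example: on the blow-up of $\P^2$ at two points the forward isotropic rays form a circle, yet the only nef ones are $h-e_1$ and $h-e_2$; the three caps coming from $e_1$, $e_2$, and $h-e_1-e_2$ cover everything else. Nothing in your argument excludes an analogous collapse inside $\mathrm{im}(i^\ast)$, because you never exhibit even one isotropic class in that subspace that is actually nef.

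Producing such classes is precisely where the paper does its real work, and it does so by construction rather than genericity. The paper writes down the explicit family $D_\delta = h - \frac{1}{3}\sum_i e_i - \delta' f_1 - \delta\sum_{j\ge 2} f_j$ (in the basis pulled back from $\P^2$), with every exceptional coefficient at most $\frac{1}{3}$ of the $h$-coefficient. Since any irreducible curve $C$ with $K_S\cdot C<0$ other than an exceptional curve satisfies $3a_C>\sum(\text{exceptional coefficients of }C)$, this single coefficient bound forces $D_\delta\cdot C>0$ for \emph{all} $K$-negative classes at once, with no enumeration of $(-1)$-curves; SHGH is invoked only for the $K$-nonnegative classes, where Cauchy--Schwarz applies. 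Moreover the coefficient exactly $\frac{1}{3}$ on the $e_i$ is what makes $D_\delta$ orthogonal to $\ker i_\ast=\sum_i\R_{\ge 0}(h-3e_i)$, and $\delta,\delta'<\frac{1}{3}$ gives the strict positivity on $\Gamma=\sum_j\R_{\ge 0}(h-3f_j)$ --- a cone you never identify, and which the paper pins down via the base-locus argument for $H-\sum_j E_j$ (this is also needed to know what $\Sigma$ is before Lemma \ref{lemma:extremal_curves} can be applied). Your argument could be repaired: verify nefness of one explicit isotropic class in $\mathrm{im}(i^\ast)$ (e.g.\ any $D_\delta$, or the balanced class with all $f_j$-coefficients equal) and then use connectedness of the wall-complement to propagate strict positivity to uncountably many nearby isotropic rays. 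But as written, the proposal proves only $D\cdot C\neq 0$ where positivity is required, and it additionally hand-waves the Picard-rank-greater-than-one cases ($\P^2\times\P^2$, $F(1,2;3)$, $(\P^1)^3$, the blow-up of $\P^3$ at a point), which the paper must treat case by case because $\ker i_\ast$ and $\Gamma$ change shape there.
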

\begin{proof}
Let $\pi:S\to \P^2$ be the blow-up of $\P^2$ at 10 very general points. We have seen that $S \subset X$. The surface $S$ contains 10 exceptional classes, $d+1$ of which lie over points blown-up on $Z$. We denote these classes by $f_1, \dots, f_{d+1} \in N_1(S)$. The remaining $\pi$-exceptional classes will be denoted $e_1, \dots, e_N$, where $N=9-d$. Let $h$ be the pullback of the hyperplane class in $\P^2$ via $\pi$. Then $\{h, e_1, \dots, e_N, f_1, \dots, f_{10-N}\}$ is a basis for $N_1(S)$.

Let $\delta$ be a real number such that $\sqrt{\frac{8-N}{9(9-N)}} < \delta < \frac{1}{3}$. Observe that for $N\leq 8$ this range is nonempty. Let $\delta' = \sqrt{(9-N)\left(\frac{1}{9}-\delta^2\right)}$. It follows that $0 < \delta,\delta' < \frac{1}{3}.$ Define
\[
D_\delta = h - \frac{1}{3}\sum_{i=1}^N e_i - \delta' f_1 - \delta \sum_{j=2}^{10-N} f_j.
\]
We show that $D$ is nef. We have
\[
D^2 = 1 - \frac{N}{9} - \delta'^2 - \frac{9-N}\delta^2 = 0.
\]
Suppose now that $C \in \ceff_1(S)$ and $K_S \cdot C < 0$. If $C=e_i$ or $C=f_j$ then $D\cdot C > 0$. We may therefore write $C = ah - \sum_i b_i e_i - \sum_j c_j f_j$ with $b_i,c_j \geq 0$. Since $K_S = -3h + \sum_i e_i + \sum_j f_j$ it follows that $3a > \sum_i b_i + \sum_j c_j$. Then
\[
3 (D \cdot C) = 3a - \sum_{i=1}^N b_i - 3 \delta' c_1 - 3\delta \sum_{j=2}^{10-N} c_j.
\]
Since $\delta, \delta' < \frac{1}{3}$, this number is positive. Suppose now $K_S \cdot C \geq 0$. By the SHGH conjecture, it suffices to show that $D \cdot C \geq 0$ when $C^2 \geq 0$. If $C^2 \geq 0$ then we have $a^2 \geq \sum_i b_i^2 + \sum_j c_j^2$. Because $D^2 =0$, it follows by the Cauchy-Schwartz inequality that $D \cdot C \geq 0$, and equality holds if and only if $C$ is a multiple of $D$. This gives (1).

Now we must consider the structure of $X$. Note that if the claim holds for $\grass(2,5)$ then it holds for any linear section of $\grass(2,5)$ of dimension at least 3. Similarly, if the claim holds for $\P^2 \times \P^2$ then it holds for $F(1,2;3)$ which is a hyperplane section. We begin with case (1): $Z = \grass(2,5)$. We have $-K_Z = 5\sigma_1$ and the intersection of hyperplanes containing 6 very general points is a del Pezzo surface of degree 5, isomorphic to the blow-up of $\P^2$ at 4 points $\{q_1, \dots, q_4\}$.

The embedding of the degree 5 del Pezzo surface into $\P^5$ is given by the linear system of cubics in $\P^2$ vanishing at the four points $q_1, \dots, q_4$. If $\ell$ is the class of a line in $X$, then the map $i_\ast$ is defined by
\[
i_\ast h = 3\ell \qquad i_\ast e_i = \ell \qquad i_\ast f_j = \hat{f}_j,
\]
where $\hat{f}_j\in N_1(X)$ is the class of a line contained in the exceptional divisor over $p_j$ in $X$. The kernel of $i_\ast$ is clearly generated by the classes $h - 3e_i$. Further, we have
\[
\ceff_1(X) = i_\ast \ceff_1(S) + \sum_{j=1}^{6} \R_{\geq 0} (\ell - \hat{f}_j).
\]
Indeed, the linear system $H-\sum_{j=1}^{6}E_j$ must have non-negative intersection with any class not contained in its base locus. The base locus is the surface $S$, and if $C$ is a curve whose intersection with this linear system is non-negative then $C$ is in the span of the linear classes (c.f. Proposition \ref{prop:picrank1_verygeneral}). In the notation of the lemma, we have $\Sigma = \sum_{j=1}^{6} \R_{\geq 0} (\ell - \hat{f}_j)$ and we take $\Gamma$ to be the positive cone spanned by $(h - 3f_j)$ for $1 \leq j \leq 6$.

The intersection numbers are immediate: $D \cdot (h - 3e_i) = 0$ and $D \cdot (h-3f_j) = 1-3\delta$ if $j\geq 2$ and $D \cdot (h-3f_2) = 1- 3\delta'$. Since $\delta, \delta' < \frac{1}{3}$, these intersection numbers are strictly positive. This gives (2) and (3) and completes the proof for $Z = \grass(2,5)$.

The proofs for the remaining cases proceed in a similar manner. We summarize the relevant data in the following table.

\begin{center}
\def\arraystretch{2.0}
\begin{tabular}{|c|c|c|c|}
\hline
$Z$ & Degree & $\ker i_\ast$ & $\Gamma$\\
\hline\hline
$\grass(2,5)$ & 5 & $\sum_{i=1}^4\R_{\geq 0}(h-3e_i)$ & $\sum_{j=1}^{6} (h-3f_j)$\\
\hline
$Q_1 \cap Q_2$& 4 & $\sum_{i=1}^5\R_{\geq 0}(h-3e_i)$ & $\sum_{j=1}^{5} (h-3f_j)$\\
\hline
Cubic hypersurface & 3 & $\sum_{i=1}^6\R_{\geq 0}(h-3e_i)$ & $\sum_{j=1}^{4} (h-3f_j)$\\
\hline
$\P^2 \times \P^2$ & 6 & $\sum_{i,j}\R_{\geq 0}(e_i-e_j)$ & $\sum_{i,j} \R_{\geq 0}(e_i-f_j) + \sum_{i,j,k}\R_{\geq 0}(h-e_i-e_j-f_k)$\\
\hline
$\P^1 \times \P^1 \times \P^1$ & 6 & $\R_{\geq 0} (h-e_1-e_2-e_3)$ & $\sum_{i,k}(e_i - f_k)$\\
\hline
Double cover of $\P^n$ & 2 & $\sum_{i=1}^7 \R_{\geq 0} (h-3e_i)$ & $\sum_{j=1}^{3} \R_{\geq 0}(h-3f_j)$\\
\hline
{\parbox{3cm}{Sextic hypersurface in $\P(3,2,1,\dots,1)$}} & 1 & $\sum_{i=1}^8 \R_{\geq 0} (h-3e_i)$ & $\sum_{j=2}^{4} (h-3f_j)$\\
\hline
{\parbox{3cm}{Blow-up of $\P^3$ at a point}} & 7 & $\sum_{i=1}^2 \R_{\geq 0} (h - 3e_i)$ & $\sum_{j=1}^8 \R_{\geq 0}(h - 3f_j)$\\
\hline
\end{tabular}
\end{center}
\end{proof}

Returning to the case $Z= \grass(2,5)$ we can show that blow-ups of certain Grassmannians at points in special position have non-finitely generated pseudoeffective cones of curves.

\begin{cor}\label{cor:infinitely_gen_sigma3}
Assume the SHGH conjecture holds for the blow-up of $\P^2$ at 10 very general points. Let $X=\grass(2+s,5+s)$ and fix a Schubert variety $\Sigma_3 \subset X$. If $\Gamma$ is a set of $r \geq 6$ very general points in $\Sigma_3$, Then $\ceff_1(X_\Gamma)$ is not finitely generated.
\end{cor}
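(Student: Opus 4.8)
The plan is to produce infinitely many distinct extremal rays of $\ceff_1(X_\Gamma)$ by pushing nef classes forward from a surface, following the template of Theorem \ref{thm:fanomfld_infinite}. The first point is to understand $\Sigma_3$ itself. Since $\sigma_3=\sigma_{(n-k)}$ with $n-k=3$, the Schubert variety $\Sigma_3$ is the locus of $(2+s)$-planes containing a fixed line, so $\Sigma_3\cong\grass(1+s,4+s)$; in particular $\Sigma_3\cong\grass(2,5)$ when $s=1$. Writing $\iota\colon\Sigma_3\hookrightarrow X$ for the inclusion, a line of $\Sigma_3$ is a pencil $F_1+A\subseteq W\subseteq F_1+B$ and hence a line of $X$, while a line in an exceptional divisor of the blow-up $\tilde\Sigma_3$ of $\Sigma_3$ along $\Gamma$ maps to a line in the corresponding exceptional divisor of $X_\Gamma$. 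Thus $\iota_*\colon N_1(\tilde\Sigma_3)\to N_1(X_\Gamma)$ sends $\ell\mapsto\ell$ and $\ell_i\mapsto\ell_i$ and is an isomorphism, so every curve class on $X_\Gamma$ already comes from $\Sigma_3$; this is what will let me control the auxiliary cone $\Sigma$ in Lemma \ref{lemma:extremal_curves}.

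Next I would build the surface. When $s=1$ this is immediate: $\Sigma_3\cong\grass(2,5)$, and by the proposition preceding Theorem \ref{thm:fanomfld_infinite} the base locus of the Pl\"ucker system $H-\sum_{i=1}^{6}E_i$ through six of the very general points $\Gamma\subset\Sigma_3$ is a degree-$5$ del Pezzo surface, which after blowing up those six points becomes a surface $S\cong\mathrm{Bl}_{10}\P^2\subset X_\Gamma$. For $s>1$ I would attempt to transport this picture into $\Sigma_3$ by the cone construction of the introduction, sweeping out the del Pezzo $S_0\subset\grass(2,5)$ and coning $s$ times to obtain an isomorphic copy $S\subset\Sigma_3$. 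In either case $S\cong\mathrm{Bl}_{10}\P^2$, and its exceptional classes split into four classes $e_1,\dots,e_4$ and six classes $f_1,\dots,f_6$ lying over points blown up on $X$, matching the $\grass(2,5)$ bookkeeping of Theorem \ref{thm:fanomfld_infinite}; any remaining points of $\Gamma$ only enlarge $\Sigma$.

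With $S$ in place I would reuse verbatim the classes
\[
D_\delta = h-\tfrac13\sum_{i=1}^{4}e_i-\delta' f_1-\delta\sum_{j=2}^{6}f_j,
\]
whose nefness on $S\cong\mathrm{Bl}_{10}\P^2$ is intrinsic to the surface and uses only the SHGH conjecture for ten very general points of $\P^2$; this gives hypothesis (1) of Lemma \ref{lemma:extremal_curves} for free. It then remains to compute the pushforward $i_*\colon N_1(S)\to N_1(X_\Gamma)$, namely $i_*h=3\ell$, $i_*e_i=\ell$, $i_*f_j=\ell_j$, to identify $\ker i_*=\sum_{i}\R_{\geq0}(h-3e_i)$ and the decomposition $\ceff_1(X_\Gamma)=i_*\ceff_1(S)+\Sigma$ with $\Sigma$ spanned by strict transforms $\ell-\ell_j$ of lines of $X$ through the $p_j$. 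Hypotheses (2) and (3) then reduce to the intersection computations $D_\delta\cdot(h-3e_i)=0$ and $D_\delta\cdot(h-3f_j)>0$ already performed in the table closing Theorem \ref{thm:fanomfld_infinite}, so Lemma \ref{lemma:extremal_curves} makes each $i_*D_\delta$ span an extremal ray; as $\delta$ ranges over its allowed interval these rays are distinct, and $\ceff_1(X_\Gamma)$ cannot be finitely generated.

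The main obstacle is the surface construction for $s>1$. The cone construction lands its output inside the sub-Schubert variety $\Sigma_{3^s}\cong\grass(2,5)$ of $(2+s)$-planes through a fixed $s$-dimensional subspace, and six very general points of $\Sigma_3\cong\grass(1+s,4+s)$ do not lie on any such copy; so the delicate step is to realize $\mathrm{Bl}_{10}\P^2$ through points in the generality actually asserted, and to check that the ambient line class $H$ restricts to it as the polarization used in $D_\delta$ (so that $i_*$, $\ker i_*$, and $\Sigma$ come out as tabulated). Once the surface and the pushforward data are pinned down, the extremality and the non-finite generation follow formally, and the dependence on SHGH is confined, exactly as in Theorem \ref{thm:fanomfld_infinite}, to the nefness of the $D_\delta$ on the surface.
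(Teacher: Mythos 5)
For $s=1$ (so $X=\grass(3,6)$ and $\Sigma_3\cong\grass(2,5)$) your argument is correct and takes a genuinely different, more direct route than the paper. You run Lemma \ref{lemma:extremal_curves} with the ambient space equal to $X_\Gamma$ itself: because $\Sigma_3$ is a linear section of $\grass(3,6)$ in the Pl\"ucker embedding, the base locus of $|H-\sum_{i=1}^{6}E_i|$ on $X_\Gamma$ is still the quintic del Pezzo surface, so your decomposition $\ceff_1(X_\Gamma)=i_*\ceff_1(S)+\Sigma$ with $\Sigma=\sum_j\R_{\geq0}(\ell-\ell_j)$, the identification of $\ker i_*$, and the positivity checks for the classes $D_\delta$ all go through. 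The paper never does this; it first produces infinitely many extremal rays on the blow-up of $\Sigma_3\cong\grass(2,5)$ at the corresponding points (Theorem \ref{thm:fanomfld_infinite}) and then transports them into $X_\Gamma$ by a pair of maps: the coning map $\varphi$ (span with the vertex $p$ of $\Sigma_3$) and the slicing map $\psi$ (intersect the swept-out variety $V_C=\bigcup_{W\in C}W$ with a general hyperplane). Both maps preserve pseudoeffectivity and satisfy $\psi\circ\varphi=\mathrm{id}$, so any decomposition of $\varphi(D)$ in $\ceff_1(X_\Gamma)$ descends to a decomposition of $D$, and extremal rays go to extremal rays.

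The genuine gap in your proposal is the one you flag yourself: for $s\geq2$ you cannot produce the surface, and you leave that step unresolved. The idea you are missing is exactly the transport mechanism above, which requires no surface in the ambient Grassmannian at all: the paper inducts on $s$, at each stage pushing extremal rays of $\ceff_1$ of the blow-up of $\Sigma_3\cong\grass(1+s,4+s)$ at the sliced points $\Delta=\{W_i\cap H\}$ up to $\ceff_1(X_\Gamma)$ via $\varphi$, with $\psi$ certifying extremality. Your observation that the pushforward from the blow-up of $\Sigma_3$ is an isomorphism on $N_1$ is only the trivial half of this (it is the paper's $\varphi$); the substantive half, absent from your write-up, is that the inverse map also preserves pseudoeffectivity, which is what the slicing construction establishes and what turns a linear isomorphism of numerical spaces into a statement about cones.

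That said, the obstruction you identified is real and in fact cuts against the paper as well. The sliced points $\Delta$ are very general in $\grass(1+s,4+s)$, not in a Schubert variety $\Sigma_3$ of it, so for $s\geq2$ the inductive hypothesis never applies to them; and by Proposition \ref{prop:lingen_terriblebound} the blow-up of $\grass(1+s,4+s)$ at $r$ very general points with $6\leq r\leq\binom{4+s}{1+s}-3(1+s)$ has S-generated, hence polyhedral, cone of curves, so the transport can only yield finitely many rays there. Indeed for $s=2$, $r=6$ the conclusion of the corollary fails outright: $\mathrm{span}(\Gamma)$ lies in $\mathrm{span}(\Sigma_3)$, and $\grass(4,7)\cap\mathrm{span}(\Sigma_3)=\Sigma_3$ since $\Sigma_3$ is a linear section, so $\grass(4,7)\cap\mathrm{span}(\Gamma)=\Sigma_3\cap\mathrm{span}(\Gamma)$ is finite (apply the argument of Proposition \ref{prop:lingen_terriblebound} inside $\Sigma_3\cong\grass(3,6)$, using $6\leq\binom{6}{3}-9$); hence $H-\sum_{i=1}^{6}E_i$ is nef on $X_\Gamma$ and Proposition \ref{prop:nef_implies_lingen} forces $\ceff_1(X_\Gamma)$ to be S-generated. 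So both your approach and the paper's really establish only the case $s=1$, the case the paper actually invokes for $\grass(3,6)$; completing your write-up with the $\varphi$/$\psi$ transport would put you on par with the paper, but neither method, as it stands, yields $s\geq2$.
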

\begin{proof}
We induct on $s$, using Theorem \ref{thm:fanomfld_infinite} as the base case. The Schubert variety $\Sigma_3$ consists of all $(1+s)$-planes in $\P^{4+s}$ passing through a fixed point $p$. Given a curve $C \in \grass(2+s,5+s)$, we obtain a variety
\[
V_C = \bigcup_{W \in C} W,
\]
where the $W$ are treated as $(1+s)$-planes in $\P^{4+s}$. If $(\Sigma_3)_\Gamma$ is the blow-up of the Schubert variety $\Sigma_3$ along $\Gamma$, we obtain a map $\psi$ from $N_1((\Sigma_3)_\Gamma)$ to the blow-up $X^{s-1}_\Delta$ of $\grass(1+s,4+s)$ as follows. Given a curve $C$, we choose a general hyperplane $H \subset \P^{4+s}$. Then $H \cap V_C$ is a subvariety of $\P^{3+s}$ of the same degree as $V_C$. We define the set $\Delta$ to be the intersection of $H$ with the $r$ elements of $\Gamma$.

We also obtain a map $X^{s-1}_\Delta \to X^s_\Gamma$ by sending a $s$-plane $W \subset \P^{3+s}$ to the $(s+1)$-plane in $\P^{4+s}$ spanned by $W$ and the fixed point $p$. This induces a map $\varphi: N_1(X^{s-1}_\Delta) \to N_1(X^s_\Gamma)$.

Let $D \in \ceff_1(X^{s-1}_\Delta)$ be an extremal class. Write $D = a\ell^{s-1} - \sum_{i=1}^r b_i \ell_i$, where $\ell^{s-1}$ is the pull-back of the class $\sigma_{3,\dots,3,2} \in A_1(\grass(1+s,4+s))$. Then $\varphi(D) = a\ell^s  - \sum_{i=1}^r b_i \ell_i$. We claim that $\varphi(D)$ is extremal in $\ceff_1(X_\Gamma^s)$.

First observe that $\varphi(D)$ is pseudoeffective. Indeed, for any $\varepsilon > 0$, $D + \varepsilon \ell^{s-1}$ is effective and is therefore represented by a variety $C_\varepsilon$. This yields a variety $V_{C_\varepsilon} \subset \P^{3+s}$. Taking the cone over $V_{C_\varepsilon}$ we obtain a variety $V_{C_\varepsilon}'$ in $\P^{4+s}$ swept out by the cones over the $(1+s)$-planes of $C_\varepsilon$. Thus $(a+\varepsilon)\ell^s - \sum_i b_i \ell_i$ is effective in $\grass(2+s,5+s)$. Taking the limit as $\varepsilon\to 0$, we see that $\varphi(D)$ is pseudoeffective.

Now suppose $\varphi(D) = \alpha + \beta$ for $\alpha, \beta \in \ceff_1(X^s_\Gamma)$. We next show that $\psi(\alpha)$ and $\psi(\beta)$ are pseudoeffective in $X^{s-1}_\Delta.$ Write $\alpha = c \ell^s - \sum_i d_i \ell_i$. Then $\alpha + \varepsilon\ell^s$ is effective for $\varepsilon > 0$. Let $C_\varepsilon$ be a variety representing $\alpha + \varepsilon \ell^s$. A general hyperplane section of $V_{C_\varepsilon}$ gives a curve in $\grass(1+s,4+s)$ of class $\psi(\alpha + \varepsilon\ell^{s})=(a+\varepsilon)\ell^{s-1} - \sum_i d_i \ell_i$. Taking limits as $\varepsilon \to  0$ shows that $\psi(\alpha)$ is pseudoeffective.

Since $D$ is extremal and $D = \psi(\alpha) + \psi(\beta)$, we must have $\psi(\alpha) = a_1 D$ and $\psi(\beta) = a_2 D$. It follows that $\alpha = a_1\varphi(D)$ and $\beta = a_2 \varphi(D)$.
\end{proof}

The corollary shows that specializing points can easily force pathological behavior of the pseudoeffective cone of curves. Already in the case of $X=\grass(3,6)$, Corollary \ref{prop:lingen_verygeneral} shows that $\ceff_1(X_r)$ is S-generated for $r\leq 42$ very general points. On the other hand, Corollary \ref{cor:infinitely_gen_sigma3} shows that if these points are specialized to a general set $\Gamma\subset\Sigma_3$, then $\ceff_1(X_\Gamma)$ is not finitely generated for $r \geq 6$. Nevertheless, if the points of $\Gamma$ are specialized to particular Schubert varieties, $\ceff_1(X_\Gamma)$ can be well-behaved. For example, if $\Gamma$ is contained in a 1-dimensional Schubert variety then $\ceff_1(X_\Gamma)$ will always be S-generated.

\begin{prop}\label{prop:points_on_a_line}
Let $L=\Sigma_{n-k,n-k,\dots,n-k,n-k-1}$ be a line in $\grass(k,n)$ and $\Gamma$ a set of $r$ points contained in $L$. Then $\ceff_1(X_\Gamma)$ is S-generated.
\end{prop}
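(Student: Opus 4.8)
The plan is to identify the cone of $S$-generators explicitly, compute its dual cone in $N^1(X_\Gamma)$, and then verify that every class in that dual pairs nonnegatively with every effective curve. The one place the collinearity hypothesis enters is that it forces a single extra generator, $\ell-\sum_i\ell_i$, and this extra generator reshapes the dual cone so that the relevant boundary divisors are the classes $H-E_j$, which are nef for the elementary reason that the multiplicity of a curve at a point never exceeds its degree.

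First I would pin down the generators. Since $L$ is a line in the Pl\"ucker embedding (its class $\sigma_{n-k,\dots,n-k,n-k-1}=\ell$ is rigid, as in the proof of Proposition \ref{prop:lines_through_one_pt}), any two of the points $p_1,\dots,p_r$ span a unique projective line, necessarily $L$ itself, and $L$ passes through all of them. Hence the only Schubert line meeting $\Gamma$ in two or more points is $L$, whose strict transform has class $\ell-\sum_{i=1}^r\ell_i$ (it is smooth and meets each $p_i$ with multiplicity one), while the remaining Schubert lines contribute the classes $\ell-\ell_i$. Therefore $\ceff_1(X_\Gamma)$ is $S$-generated if and only if it equals
\[
\mathcal{S}=\left\langle\, \ell_i,\ \ell-\ell_i\ (1\le i\le r),\ \ell-\sum_{i=1}^r\ell_i \,\right\rangle,
\]
and the inclusion $\mathcal{S}\subseteq\ceff_1(X_\Gamma)$ is immediate because each generator is effective. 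It remains to prove the reverse inclusion.

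For this I would compute $\mathcal{S}^\vee$. Writing $D=\alpha H-\sum_i\beta_iE_i$ and using $D\cdot\ell_i=\beta_i$, $D\cdot(\ell-\ell_i)=\alpha-\beta_i$, and $D\cdot(\ell-\sum_i\ell_i)=\alpha-\sum_i\beta_i$, one finds
\[
\mathcal{S}^\vee=\Big\{\,\alpha H-\sum_i\beta_iE_i:\ \beta_i\ge0,\ \alpha\ge\sum_{i}\beta_i\,\Big\}.
\]
This is the crucial structural point: the generator $\ell-\sum_i\ell_i$ collapses the $r$ separate inequalities $\alpha\ge\beta_i$ into the single inequality $\alpha\ge\sum_i\beta_i$, so the extremal classes of $\mathcal{S}^\vee$ are $H$ and the $H-E_j$, rather than $H-\sum_iE_i$ as in the general-point situation of Proposition \ref{prop:nef_implies_lingen}. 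Now let $C\subset X_\Gamma$ be an irreducible curve. If $C\subseteq E_i$ then $[C]=b\ell_i\in\mathcal{S}$; otherwise, by Lemma \ref{lemma:positive_coeffs}, $[C]=a\ell-\sum_i m_i\ell_i$ with $a\ge1$ and $m_i=\operatorname{mult}_{p_i}\pi(C)\ge0$. Since $\pi(C)$ is an irreducible curve of degree $a$, its multiplicity at any point satisfies $m_i\le a$, and hence for every $D=\alpha H-\sum_i\beta_iE_i\in\mathcal{S}^\vee$,
\[
[C]\cdot D=a\alpha-\sum_i m_i\beta_i\ \ge\ a\alpha-a\sum_i\beta_i\ =\ a\Big(\alpha-\sum_i\beta_i\Big)\ \ge\ 0.
\]
Thus every effective curve class pairs nonnegatively with $\mathcal{S}^\vee$, so by biduality of the polyhedral (hence closed) cone $\mathcal{S}$ it lies in $\mathcal{S}$; as $\ceff_1(X_\Gamma)$ is the closed convex cone generated by effective curve classes, we conclude $\ceff_1(X_\Gamma)\subseteq\mathcal{S}$ and therefore equality.

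I expect the only genuine subtleties to be the bookkeeping of the first step---verifying that $L$ is the unique multisecant Schubert line, so that $\ell-\sum_i\ell_i$ is the single ``new'' generator produced by collinearity---together with the standard but essential inequality $\operatorname{mult}_{p_i}\pi(C)\le\deg\pi(C)$. Everything else is the linear algebra of passing between $\mathcal{S}$ and $\mathcal{S}^\vee$.
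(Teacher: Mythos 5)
Your proof is correct, but it runs on a different mechanism than the paper's. The paper disposes of $r=1$ by citing Corollary \ref{cor:lin_gen1pt} and, for $r\geq 2$, runs a dichotomy on the single divisor class $H-\sum_{i=1}^r E_i$: collinearity enters by forcing the base locus of this linear system to be exactly the strict transform of $L$, so an irreducible curve $C$ either meets the class nonnegatively---giving $a\geq\sum_i b_i$, whence $C$ lies in the span of $\ell-\ell_i$ and $\ell_i$ by Lemma \ref{lem:sum_alambda}---or is contained in the base locus and hence equals the strict transform of $L$, itself an S-class. You avoid the dichotomy entirely: you never need to identify the base locus of $|H-\sum_i E_i|$, only that each $H-E_j$ pairs nonnegatively with every irreducible curve (the multiplicity-versus-degree bound) and that these classes, together with $H$, cut out the dual of the S-cone once $\ell-\sum_i\ell_i$ is admitted as a generator; biduality of the polyhedral cone $\mathcal{S}$ then closes the argument, with collinearity used only for the easy inclusion, namely to realize $\ell-\sum_i\ell_i$ as the class of the strict transform of $L$. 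A byproduct worth noting: your argument shows $\ceff_1(X_\Gamma)\subseteq\left\langle \ell_i,\ \ell-\ell_i,\ \ell-\sum_i\ell_i\right\rangle$ for an \emph{arbitrary} configuration $\Gamma$, so it isolates precisely the one point where the geometry of $\Gamma$ matters (effectivity of $\ell-\sum_i\ell_i$). The paper's route is shorter given that Lemma \ref{lem:sum_alambda} is already on hand, and its base-locus dichotomy recurs essentially verbatim in the proposition that follows (points on a subgrassmannian); yours is more uniform---no case analysis, and $r=1$ needs no separate citation---at the cost of the explicit dual-cone computation.
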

\begin{proof}
Let $C \in \ceff_1(X_\Gamma)$. If $r=1$ then the claim follows from Corollary \ref{cor:lin_gen1pt}. If $r \geq 2$, then the base locus of the linear system $H- \sum_{i=1}^r E_i$ is the line $L$. If $C \cdot (H- \sum E_i) < 0$ then $C \subset L$, hence $C = L$. Otherwise, $C$ has non-negative intersection with the linear system, thus is in the span of the Schubert classes.
\end{proof}

It would be interesting to systematically describe effective cones of blow-ups of Grassmannians at special configurations of points. We conclude with a small step in this direction: the S-generation of $\ceff_1(X_\Gamma)$ is determined by S-generation of the effective cone of a blown-up subgrassmannian.

\begin{prop}
Let $V$ be an $n$-dimensional vector space and $W \subseteq V$ a linear subspace. Let $X = \grass(k,V)$ and $Y = \grass(k,W) \subseteq X$. Let $\Gamma$ be a collection of  $r$ points of $Y$, and let $X_\Gamma$ and $Y_\Gamma$ be the blow-ups of $X$ and $Y$ along $\Gamma$, respectively. Then $\ceff_1(X_\Gamma)$ is S-generated if and only if $\ceff_1(Y_\Gamma)$ is.
\end{prop}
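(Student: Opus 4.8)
The plan is to reduce both S-generation statements to the nefness of a single divisor class and then show that these nefness conditions coincide. By Proposition \ref{prop:nef_implies_lingen}, $\ceff_1(X_\Gamma)$ is S-generated if and only if $D_X := H_X - \sum_{i=1}^r E_i$ is nef on $X_\Gamma$, and likewise $\ceff_1(Y_\Gamma)$ is S-generated if and only if $D_Y := H_Y - \sum_{i=1}^r E_i$ is nef on $Y_\Gamma$ (here I assume $\dim W - k \geq 2$, so that the hypotheses on $Y$ hold; the remaining cases, where $Y$ is a point or a projective space, are trivial). It therefore suffices to prove that $D_X$ is nef if and only if $D_Y$ is.

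First I would record the geometry of the inclusion $i : Y_\Gamma \hookrightarrow X_\Gamma$. The Pl\"ucker embedding realizes $Y = \grass(k,W)$ as the linear section $X \cap \P(\Lambda^k W)$, so $\sigma_1$ on $X$ restricts to $\sigma_1$ on $Y$ and hence $i^* H_X = H_Y$. Since each $p_j$ is a smooth point of $Y$ and we blow up points, $Y_\Gamma$ is the strict transform of $Y$ in $X_\Gamma$, and it meets $E_j = \P(T_{p_j}X)$ transversally along $\P(T_{p_j}Y)$; thus $i^* E_j = E_j$ (the exceptional divisor of $Y_\Gamma$) and consequently $i^* D_X = D_Y$. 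The forward implication ($D_X$ nef $\Rightarrow$ $D_Y$ nef) is then immediate from the projection formula: for the class $C$ of any curve on $Y_\Gamma$ we have $D_Y \cdot C = i^*D_X \cdot C = D_X \cdot i_* C \geq 0$, because $i_* C$ is pseudoeffective and $D_X$ is nef.

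For the converse the key observation is that the curves which could violate nefness of $D_X$ are confined to a locus lying inside $Y$. Writing such a curve as $C = a\ell - \sum b_i \ell_i$ with $b_i \geq 0$ (Lemma \ref{lemma:positive_coeffs}), one has $D_X \cdot C = a - \sum b_i = \deg \pi(C) - \sum_i \mathrm{mult}_{p_i}\pi(C)$, while curves supported in the exceptional divisors meet $D_X$ nonnegatively. Let $\Pi = \langle \Gamma \rangle$ be the linear span of $\Gamma$ in $\P(\Lambda^k V)$. Every hyperplane through $\Gamma$ contains $\Pi$, and a general such hyperplane is smooth at each $p_i$, so its strict transform represents $D_X$; hence any curve not contained in the base locus $X \cap \Pi$ meets $D_X$ nonnegatively. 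The identical analysis on $Y$ shows that the potential violators of $D_Y$-nefness lie in $Y \cap \Pi$. Now, because $\Gamma \subset Y \subset \P(\Lambda^k W)$ we have $\Pi \subseteq \P(\Lambda^k W)$, whence $X \cap \Pi = (X \cap \P(\Lambda^k W)) \cap \Pi = Y \cap \Pi$. Thus both nefness conditions reduce to the same statement, namely that every irreducible curve $C$ in the common locus $X \cap \Pi = Y \cap \Pi$ satisfies $\deg C \geq \sum_i \mathrm{mult}_{p_i} C$; since a curve contained in $Y$ has the same degree and the same multiplicities whether viewed in $Y$ or in $X$, this yields $D_X$ nef $\iff$ $D_Y$ nef, and the proposition follows.

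The step I expect to require the most care is the claim that violating curves are confined to the base locus $X \cap \Pi$: this rests on the linear system $|H_X - \sum E_i|$ being nonempty (which holds whenever $Y \neq X$, since then $\Pi \subseteq \P(\Lambda^k W) \subsetneq \P(\Lambda^k V)$) and on a general member of it having multiplicity exactly one at each $p_i$, so that its strict transform is genuinely of class $D_X$ rather than $H_X - \sum m_i E_i$ with some $m_i > 1$. Concretely, one should check that a general hyperplane through $\Pi$ does not contain the embedded tangent space $T_{p_i}X$, which holds as long as $\Pi$ itself does not; this is a mild condition, easily verified for points spanning $\Pi$. Dealing with the degenerate ranges of $\dim W$ and the trivial case $Y = X$ separately then completes the argument.
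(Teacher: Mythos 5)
Your proof has the same skeleton as the paper's --- the pushforward/projection-formula comparison ($i_*\ell_Y=\ell_X$, $i_*\ell_{i,Y}=\ell_{i,X}$, equivalently $i^*D_X=D_Y$) for one implication, and the Pl\"ucker base-locus containment $\langle\Gamma\rangle\subseteq\P(\Lambda^k W)$, hence $X\cap\langle\Gamma\rangle\subseteq Y$, for the other --- but reducing both sides to nefness via Proposition \ref{prop:nef_implies_lingen} creates a genuine gap. The direction ``S-generated $\Rightarrow$ $H-\sum E_i$ nef'' of that proposition is only valid when no line of the Grassmannian passes through two points of $\Gamma$: S-generation, as defined in the paper, admits as generators strict transforms of Schubert varieties through \emph{several} points of $\Gamma$, and such generators violate the numerical inequality $a\geq\sum b_i$. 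In the present proposition $\Gamma$ is an \emph{arbitrary} collection of points of $Y$, and special configurations are exactly the intended use (the paper goes on to apply it to points on a Schubert variety). Concretely, let $\Gamma$ consist of $r\geq 2$ points on a line $L\subset Y$; note that any line of $X$ through two points of $Y$ automatically lies in $Y$, being the projective line joining them inside $\P(\Lambda^k W)$. By Proposition \ref{prop:points_on_a_line}, both $\ceff_1(Y_\Gamma)$ and $\ceff_1(X_\Gamma)$ are S-generated, yet $D_Y\cdot\tilde{L}=D_X\cdot\tilde{L}=1-r<0$, so neither $D_Y$ nor $D_X$ is nef. Thus your converse step ``$\ceff_1(Y_\Gamma)$ S-generated $\Rightarrow$ $D_Y$ nef'' (and likewise your forward step for $D_X$) fails precisely in cases the proposition must cover, and these are not confined to the degenerate ranges of $\dim W$ that you set aside.

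The paper's proof of the converse is structured to avoid this: it never extracts nefness from S-generation of $Y_\Gamma$. A curve $C$ in $X_\Gamma$ meeting $H-\sum E_i$ nonnegatively is in the span of $\ell-\ell_i$, $\ell_i$ by Lemma \ref{lem:sum_alambda}, while a curve meeting it negatively lies in the base locus, hence in $Y_\Gamma$; its decomposition into S-generators of $Y_\Gamma$ --- whatever these are, including lines through several points of $\Gamma$ --- then pushes forward to a decomposition into S-generators of $X_\Gamma$, since lines of $Y$ are lines of $X$. Your argument can be repaired in the same way: keep your base-locus analysis for $D_X$, but on the $Y$-side work with the S-generator decomposition itself rather than with nefness of $D_Y$; and in the forward direction use that $i_*$ is injective on $N_1$ and that every S-generator class of $X_\Gamma$ of the form $\ell-\sum_{i\in S}\ell_i$ with $|S|\geq 2$ arises from a line through points of $\Gamma$, which necessarily lies in $Y$, hence is $i_*$ of an S-generator class of $Y_\Gamma$. (In fairness, the paper's own forward direction also implicitly uses ``S-generated $\Rightarrow a\geq\sum b_i$'' and needs the same patch for special $\Gamma$; but its converse is robust, whereas in your version both implications pass through the fragile step.)
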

\begin{proof}
Let $\ell_X$ denote the pullback to $X_\Gamma$ of the one-dimensional Schubert class on $X$, $\ell_Y$ the same for $Y_\Gamma$, and similar for the exceptional classes $\ell_{i,X}$ and $\ell_{i,Y}$. Assume first that $\ceff_1(X_\Gamma)$ is S-generated. If $C$ is an effective curve class on $Y_\Gamma$, we may write $C = a\ell_Y - \sum_{i=1}^r b_i \ell_{i,Y}$. By Lemma \ref{lemma:positive_coeffs}, we may assume $b_i \geq 0$ for all $1 \leq i \leq r$. If $i:Y_\Gamma \to X_\Gamma$ is the inclusion then $i_\ast \ell_Y = \ell_X$ and $i_\ast \ell_{i,Y} = \ell_{i,X}$. Since $\ceff_1(X_\Gamma)$ is S-generated and $i_\ast C$ is effective, we have $a \geq \sum_{i=1}^r b_i$. It follows that $C$ is itself in the span of Schubert classes on $Y_\Gamma$.

Conversely, assume $\ceff_1(Y_\Gamma)$ is S-generated and let $C$ be a curve in $X_\Gamma$. Then either $C$ has non-negative intersection with $H-\sum_{i=1}^r E_i$, in which case $C$ is in the span of the Schubert classes, or $C$ is contained in the base locus. Note that the base locus of $H-\sum_{i=1}^r E_i$ is contained in $Y$. Indeed, in the Pl\"ucker embedding $X \to \P(\Lambda^k V)$, we have $Y = X \cap \P(\Lambda^k W)$. In particular, the base locus of the space of hyperplanes in $\P(\Lambda^k V)$ containing $\Gamma$ is contained in $\P(\Lambda^k W)$, hence its intersection with $X$ is contained in $Y$. It follows that if $C$ has negative intersection with $H- \sum_{i=1}^r E_i$ then $C \subset Y$, in which case $C$ is in the span of the Schubert classes by assumption.
\end{proof}

\begin{example}
Let $X = \grass(2,V)$ with $n=\dim V>4$. Let $\Gamma$ be a set of $r$ general points of a Schubert variety $\Sigma_{n-3,n-4}(F_\bullet)$ for some flag $F_\bullet$. We will show that $X_\Gamma$ is S-generated if and only if $r \leq 2$. By definition, $\Sigma_{n-3,n-4}(F_\bullet)$ consists of 2-planes contained in $F_4$ whose intersection with $F_2$ is at least 1-dimensional. Then $\Sigma_{n-3,n-4}(F_\bullet)$ is contained in the Grassmannian $Y = \grass(2,F_4)$. By the proposition, it suffices to show $\ceff_1(Y_\Gamma)$ is S-generated if and only if $r \leq 2$. The claim now follows from Proposition \ref{prop:curves_g24}.
\end{example}

\bibliographystyle{plain}

\begin{thebibliography}{ABCD}
\bibitem[A]{anderson}
D.~Anderson, Effective divisors on Bott-Samelson varieties. arXiv:1501.00034.

\bibitem[CC]{chen-coskun}
D.~Chen and I.~Coskun, Extremal higher codimension cycles on moduli spaces of curves. \emph{Proc. Lond. Math. Soc.} \textbf{111} no.\ 1 (2015), 181--204.

\bibitem[C1]{coskun-lwr}
I.~Coskun, A Littlewood-Richardson rule for two-step flag varieties. \emph{Invent. Math.}, \textbf{176} no.\ 2 (2009), 325--395.

\bibitem[C2]{coskun-rigidity}
I.~Coskun, Rigid and non-smoothable Schubert classes. \emph{Journal of Differential Geometry}, \textbf{87} (2011), no.\ 3, 493--514.

\bibitem[CCH+]{degree-grassmannians} I.~Coskun, L.~Costa, J.~Huizenga, R.M.~Mir\'o-Roig, and M.~Woolf, Ulrich Schur bundles on flag varieties. arXiv:1512.06193.

\bibitem[CLO]{clo}
I.~Coskun, J.~Lesieutre, and J.C.~Ottem, Effective cones of cycles of blow-ups of projective space. \emph{J. of Algebra \& Number Theory}, \textbf{10} (2016), 1983--2014.

\bibitem[CPS]{cps}
I.~Coskun and A.~Prendergast-Smith, Fano manifolds of index $n-1$ and the cone conjecture. \emph{Int. Math. Res. Notices}, \textbf{9} (2014), 2401--2439.

\bibitem[DF]{de-fernex} T.~de~Fernex, On the Mori cone of blow-ups of the plane. arXiv:1001.5243.

\bibitem[DELV]{delv}
O.~Debarre, L.~Ein, R.~Lazarsfeld, and C.~Voisin, Pseudoeffective and nef classes on abelian varieties. \emph{Compos. Math.} \textbf{147} (2011), no.\ 6, 1793--1818.

\bibitem[DJV]{djv}
O.~Debarre, Z. Jiang, and C. Voisin, Pseudo-effective classes and pushforwards. \emph{Pure Appl. Math. Q.} \textbf{9} (2013), no.\ 4, 643--664.

\bibitem[EH]{eisenbud-harris}
D.~Eisenbud and J.~Harris, \emph{3264 and All That: A Second Course in Algebraic Geometry}. Cambridge University Press, Cambridge, 2016.

\bibitem[F1]{fujita-structure}
T.~Fujita, On the structure of polarize manifolds with total deficiency one, I. \emph{J. Math. Soc. Japan}, \textbf{32}, no.\ 4 (1980), 709--725.

\bibitem[F2]{fujita-classification}
T.~Fujita, On polarized varieties of small $\Delta$-genera. \emph{Tohoku Math. J. (2)}, \textbf{34}, no.\ 3 (1982), 319--341.

\bibitem[F3]{fujita-classification2}
T.~Fujita, Classification of projective varieties of $\Delta$-genus one. \emph{Proc. Japan. Acadm. Ser. A. Math. Sci.}, \textbf{58} (1982), 113--116.

\bibitem[FL]{fulger-lehmann}
M.~Fulger and B.~Lehmann, Positive cones of dual cycle classes. \emph{J. Alg. Geom.}, to appear. arXiv:1408.5154.

\bibitem[Ful]{fulton}
W.~Fulton, \emph{Young Tableaux: With Applications to Representation Theory and Geometry}. London Mathematical Society Student Texts, 35, Cambridge University Press, Cambridge, 1998.

\bibitem[FMSS]{fmss}
W.~Fulton, R.~MacPherson, F.~Sottile, and B.~Sturmfels, Intersection theory on spherical varieties. \emph{J. Alg. Geom.}, \textbf{4} (1994), 181--193.

\bibitem[G]{gimi}
A.~Gimigliano, On Linear Systems of Plane Curves, Ph.D. Thesis, Queen's University, 1987.

\bibitem[Har]{harbourne}
B.~Harbourne, The geometry of rational surfaces and Hilbert functions of points in the plane. \emph{Can. Math. Soc. Conf. Proc.} \textbf{6} (1986), 95--111.


\bibitem[Hi]{hirschowitz}
A.~Hirschowitz, Une conjecture pour la cohomologie des diviseurs sur les surfaces rationelles g\'en\'eriques. \emph{J. Reine Angew. Math.} \textbf{397} (1989), 208--213.

\bibitem[I1]{iskovskih-3foldsi}
V.A.~Iskovskih, Fano 3-folds I. \emph{Math. USSR-Izv.}, \textbf{11} (1977), 485--527.

\bibitem[I1]{iskovskih-3foldsii}
V.A.~Iskovskih, Fano 3-folds II. \emph{Math. USSR-Izv.}, \textbf{12} (1978), 469--506.


\bibitem[L]{lazarsfeld} R. Lazarsfeld, \emph{Positivity in Algebraic Geometry I, Classical Setting: Line Bundles and Linear Series}, Springer-Verlag, 2004.

\bibitem[R]{rischter} R. Rischter, Log Fano varieties and Mori Dream Spaces, Ph.D. Thesis, IMPA, preprint.

\bibitem[RZ]{rosenthal-zelevinsky}
J.~Rosenthal and A.~Zelevinsky, Multiplicities of points on Schubert varieties in Grassmannians. \emph{J. Algebraic Combinatorics}, \textbf{13} (2001), 213--218.

\end{thebibliography}

\end{document}